\documentclass{amsproc}
\usepackage{graphicx}

\usepackage{lmodern,microtype}
\usepackage[T1]{fontenc}
\usepackage[utf8]{inputenc}
\usepackage[UKenglish]{babel}

\pdfsuppresswarningpagegroup=1
\usepackage{booktabs}
\usepackage{pinlabel}
\newcommand{\mybg}[2]{%
	{\begin{tikzpicture}
		\node [fill=white,fill opacity=0.8,text opacity=1, rounded corners=2pt,inner sep=1pt] {\textcolor{#1}{#2}};
		\end{tikzpicture}}
}

\usepackage{xcolor}
\colorlet{myblue}{black}
\colorlet{lightred}{red!20!white}
\colorlet{lightblue}{blue!30!white}
\colorlet{darkgreen}{green!70!black}
\colorlet{lightgreen}{green!50!white}
\colorlet{llightgray}{lightgray!50!white}
\colorlet{gold}{yellow!90!black!70!red}
\usepackage{caption}
\captionsetup{margin=18pt, font=small}
\usepackage[labelfont=up,margin=5pt]{subcaption}

\usepackage{tikz}
\usepackage{tikz-cd}
\tikzcdset{diagrams={nodes={inner sep=2pt}}}
\usetikzlibrary{decorations,calligraphy}
\usetikzlibrary{decorations.pathmorphing}
\usetikzlibrary{decorations.pathreplacing}
\usetikzlibrary {arrows.meta} 
\tikzcdset{arrow style=tikz, diagrams={>=latex}}
\tikzset{>=latex}
\tikzstyle{t}=[fill=white,inner sep=1.5pt,rounded corners]
\tikzstyle{d}=[dashed,dash pattern={on 3pt off 1.5pt}]
\tikzstyle{g} = [line width=30pt,llightgray,line cap=round,style={shorten >=-10pt,shorten <=-10pt}]
\tikzstyle{gt} = [line width=25pt,llightgray,line cap=round,style={shorten >=-10pt,shorten <=-10pt}]
\tikzstyle{gb} = [rounded corners,ultra thick,lightgray]
\tikzstyle{gd} = [rounded corners,ultra thick,lightgray,dashed]
\tikzstyle{ppscale}=[scale=0.6,xscale=1.2]
\usepackage{pinlabel}

\usepackage[section]{placeins}
\sloppy
\raggedbottom
\usepackage[bookmarks=true,pdfencoding=auto,hidelinks]{hyperref}
\setcounter{tocdepth}{2}
\usepackage[nameinlink,capitalise]{cleveref}
\let\cref\Cref
\Crefname{equation}{}{}
\Crefname{subsection}{Subsection}{Subsections}

\usepackage{soul}

\usepackage{amsmath}
\usepackage{amsfonts}
\usepackage{amssymb}
\usepackage{amsthm}
\usepackage{nicefrac}
\usepackage{mathtools}
\usepackage{enumitem}
\mathtoolsset{mathic=true}
\usepackage{bm} 
\usepackage{marvosym}

\newtheorem{theorem}{Theorem}[section]
\newtheorem{lemma}[theorem]{Lemma}
\newtheorem{conjecture}[theorem]{Conjecture}
\crefname{conjecture}{Conjecture}{Conjectures}
\newtheorem{question}[theorem]{Question}

\newtheorem{corollary}[theorem]{Corollary}
\newtheorem{observation}[theorem]{Observation}
\newtheorem{proposition}[theorem]{Proposition}

\newtheorem*{namedthm}{\namedthmname}
\newcounter{namedthm}
\makeatletter
\newenvironment{named}[1]
{\def\namedthmname{#1}%
	\refstepcounter{namedthm}%
	\namedthm\def\@currentlabel{#1}}
{\endnamedthm}
\makeatother

\theoremstyle{definition}

\newtheorem{definition}[theorem]{Definition}
\newtheorem{example}[theorem]{Example}

\theoremstyle{remark}
\newtheorem{remark}[theorem]{Remark}

\numberwithin{equation}{section}

\DeclareMathOperator{\HFT}{HFT}
\newcommand{\HFKminus}{\operatorname{HFK}^-}
\DeclareMathOperator{\HFKhat}{\widehat{HFK}}
\DeclareMathOperator{\HFhat}{\widehat{HF}}
\DeclareMathOperator{\Khr}{\widetilde{Kh}}
\DeclareMathOperator{\BNr}{\widetilde{BN}}
\DeclareMathOperator{\Kh}{Kh}
\DeclareMathOperator{\HF}{HF}
\DeclareMathOperator{\CFD}{\widehat{CFD}}
\DeclareMathOperator{\BSD}{\widehat{BSD}}

\DeclareMathOperator{\BSAD}{\widehat{BSAD}}
\newcommand{\typeA}[2]{\prescript{}{#1}{#2}}
\newcommand{\HFcurve}[1]{\gamma^{\operatorname{HF}}_{#1}}
\newcommand{\Khcurve}[1]{\gamma^{\operatorname{Kh}}_{#1}}

\newcommand{\BSADY}{\typeA{\algT}{\mathcal{Y}}^{\algS}}
\newcommand{\capL}{\mathbf{v}}
\newcommand{\capR}{\mathbf{u}}
\newcommand{\ccon}{\mathbf{d}}
\newcommand{\s}{\mathbf{s}}
\newcommand{\sS}{\overline{\mathbf{s}}}
\newcommand{\sHF}{\mathbf{s}^{\operatorname{HF}}}
\newcommand{\sSHF}{\sS^{\operatorname{HF}}}
\newcommand{\sKh}{\mathbf{s}^{\operatorname{Kh}}}
\newcommand{\rr}{\mathbf{r}}
\newcommand{\rHF}{\mathbf{r}^{\operatorname{HF}}}
\newcommand{\rKh}{\mathbf{r}^{\operatorname{Kh}}}

\setuldepth{r}
\DeclareMathOperator{\rgr}{\text{\ul{\normalfont gr}}}
\DeclareMathOperator{\rGr}{\text{\ul{\normalfont Gr}}}
\DeclareMathOperator{\gr}{gr}
\DeclareMathOperator{\Gr}{Gr}
\newcommand{\HFKgr}[2]{[#1;#2]}
\newcommand{\HFTgr}[4]{\prescript{#1}{}{#4}^{#2}_{#3}}

\newcommand{\algT}{\mathcal{A}}
\newcommand{\algS}{\mathcal{B}}
\newcommand{\Z}{\mathbb{Z}} 
\newcommand{\R}{\mathbb{R}}
\newcommand{\Q}{\mathbb{Q}}
\newcommand{\QPI}{\operatorname{\mathbb{Q}P}^1}  
\newcommand{\F}{\mathbb{F}_2} 
\DeclareMathOperator{\id}{id} 
\DeclareMathOperator{\im}{im} 
\newcommand{\half}{\tfrac{1}{2}}
\newcommand{\hhalf}{\frac{1}{2}}
\newcommand{\hZ}{\half\Z}

\DeclareMathOperator{\am }{\alpha_\mu}
\DeclareMathOperator{\al }{\alpha_\lambda}
\DeclareMathOperator{\aaa}{\alpha_\mathit{a}}
\DeclareMathOperator{\aab}{\alpha_\mathit{b}}
\DeclareMathOperator{\aac}{\alpha_\mathit{c}}
\DeclareMathOperator{\aad}{\alpha_\mathit{d}}
\DeclareMathOperator{\ba }{\beta_\mathit{a}}
\DeclareMathOperator{\bb }{\beta_\mathit{b}}
\DeclareMathOperator{\bc }{\beta_\mathit{c}}
\DeclareMathOperator{\bd }{\beta_\mathit{d}}
\newcommand{\arcA}{\textcolor{red}{a}}
\newcommand{\arcB}{\textcolor{blue}{b}}
\newcommand{\arcC}{\textcolor{darkgreen}{c}}
\newcommand{\arcD}{\textcolor{gold}{d}}
\newcommand{\I}  {{\normalfont\texttt{1}}}
\newcommand{\II} {{\normalfont\texttt{2}}}
\newcommand{\III}{{\normalfont\texttt{3}}}
\newcommand{\IV} {{\normalfont\texttt{4}}}
\newcommand{\Za}{\mathcal{Z}_{\alpha}}
\newcommand{\Zb}{\mathcal{Z}_{\beta}}
\newcommand{\FZb}{\mathcal{F}(\Zb)}
\newcommand{\FZa}{\mathcal{F}(\Za)}

\newcommand{\ellmax}{\ell_{\text{max}}}
\newcommand{\Sset}{S_{\text{\LeftScissors}}}
\DeclareMathOperator{\interior}{int}
\newcommand{\PI}{\mathcal{P}_1}
\newcommand{\PIII}{\mathcal{P}_3}
\newcommand{\vc}[1]{\vcenter{\hbox{#1}}}
\def\co{\colon\thinspace\relax}
\newcommand{\Aa}[1]{\operatorname{A}(#1)}

\hyphenation{multi-curve multi-curves}
\begin{document}
\title{Heegaard Floer multicurves of double tangles}
\author{Claudius Zibrowius}
\address{Durham University, Department of Mathematical Sciences, United Kingdom}
\email{\href{mailto:cbz20@posteo.net}{cbz20@posteo.net}}
\thanks{}
\urladdr{\url{https://cbz20.raspberryip.com/}}
\hypersetup{pdfauthor={\authors},pdftitle={\shorttitle}}
\date{\today}

\begin{abstract}
	We describe a simple formula for computing the Heegaard Floer multicurve invariant of double tangles from the Heegaard Floer multicurve invariant of knot complements.
A comparison with a similar multicurve invariant for Conway tangles in the setting of Khovanov homology confirms that knot Floer homology and Khovanov homology behave very differently under satellite operations, echoing recent observations from \cite{LZ}. 
We also obtain a new characterisation of L-space knots in terms of Heegaard Floer A-link satellites. 
Along the way, we find the first example of a satellite knot whose knot Floer homology is thin.  
\end{abstract}
\maketitle

\section{Introduction}

\begin{figure}[b]
	\centering
	\includegraphics[height=3.4cm]{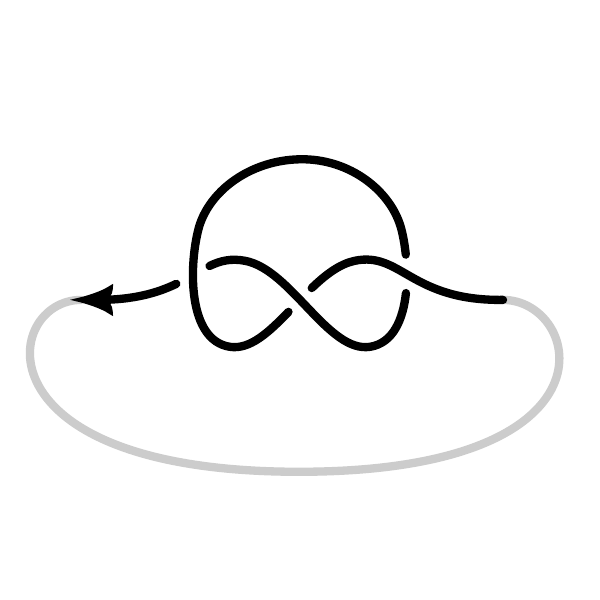}
	\hspace{1cm}
	\centering
	\labellist
	\pinlabel \(\texttt{1}\) at 65 225
	\pinlabel \(\texttt{2}\) at 65 65
	\pinlabel \(\texttt{3}\) at 225 65
	\pinlabel \(\texttt{4}\) at 225 225
	\endlabellist
	\includegraphics[height=3.4cm]{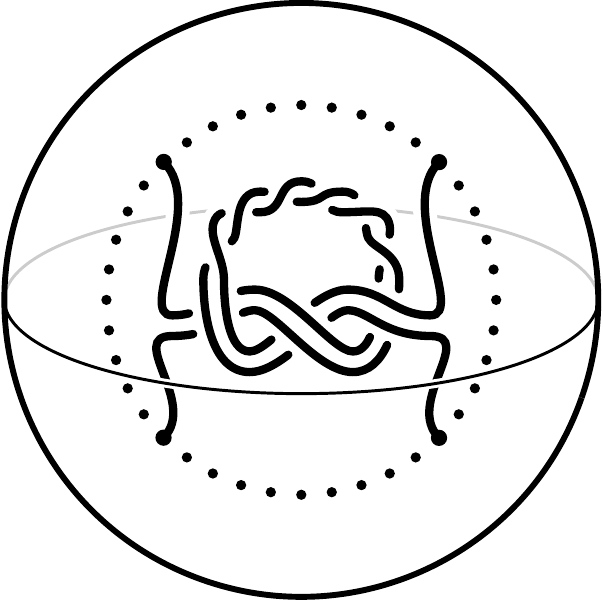}
	\caption{The double tangle (right) of the right-handed trefoil knot (left).%
	}\label{fig:double-tangle}
\end{figure}

The double tangle \(T_K\) associated with an oriented knot \(K\subset S^3\) is the Conway tangle obtained by cutting the knot \(K\) open to a tangle with two ends and adding another tangle strand parallel to the first, as illustrated in \cref{fig:double-tangle}. 
This family of Conway tangles plays a key role for understanding satellites of wrapping number 2: 
Any such satellite knot \(P(K)\) can be written as
\[
P(K)=T_K\cup T_P,
\]
the union of the double tangle \(T_K\) associated with the companion knot \(K\) and a second Conway tangle \(T_P\) corresponding to the pattern \(P\).
This perspective allows us to investigate satellite formulae for any knot invariants that admit glueable generalisations to Conway tangles.
For example, in joint work with Lukas Lewark, I recently applied this strategy to prove such satellite formulae for Rasmussen invariants \cite{LZ}, using the multicurve tangle invariant generalising Bar-Natan homology \cite{KWZ}.

The focus of this paper lies on a similar multicurve invariant: the tangle invariant \(\gamma_T\coloneqq\HFT(T)\) from Heegaard Floer theory \cite{pqMod,pqSym}. 
The main goal is to show that \(\gamma_{T_K}\) can be read off easily from yet another multicurve invariant, namely the Hanselman-Rasmussen-Watson multicurve \(\gamma_K\coloneqq \HFhat(S^3\smallsetminus\mathring{\nu}(K))\) associated with the knot exterior \cite{HRW,HRWcompanion}. 
Before explaining how this works, we briefly recall those features of the two multicurve invariants that are relevant to this story.
The reader familiar with both invariants may skip straight to the statement of the \ref{thm:main:intro} (\cpageref{thm:main:intro}), which is illustrated in \cref{fig:executive-summary}.

\begin{figure}[p]
	\centering	
		\labellist

		\pinlabel \(\R^2\smallsetminus\Z^2\) at 178 430
		\pinlabel	\(\downarrow\) at 178 405
		\pinlabel	\(S^1\times S^1\smallsetminus \{z\}\) at 178 380
		
		\pinlabel \(\R^2\smallsetminus\Z^2\) at 568 430
		\pinlabel	\(\downarrow\) at 568 405
		\pinlabel	\(S^2\smallsetminus \{\text{points labelled \(\I\), \(\II\), \(\III\), and \(\IV\)}\}\) at 568 380
		
		\pinlabel \mybg{red}{\(\alpha_a\)} at 438 167
		\pinlabel \mybg{red}{\(\alpha_b\)} at 568 50
		\pinlabel \mybg{red}{\(\alpha_c\)} at 703 167
		\pinlabel \mybg{red}{\(\alpha_d\)} at 568 303
		
		\pinlabel \(\textcolor{red}{\lambda_K}\) at 270 57
		\pinlabel \(\textcolor{red}{\mu_K}\) at 182 21
		\pinlabel \(\textcolor{darkgreen}{z}\) at 148 70
		\pinlabel \(\textcolor{darkgreen}{\sigma_1}\) at 128 50
		\pinlabel \(\textcolor{darkgreen}{\sigma_3}\) at 168 89
		\pinlabel \(\textcolor{darkgreen}{\sigma_2}\) at 172 50
		
		\pinlabel \(\textcolor{red}{\lambda_K}\) at 180 910
		\pinlabel \(\textcolor{red}{\mu_K}\) at 140 948
		
		\pinlabel \(\textcolor{blue}{\gamma_K}\) at 260 127
		\pinlabel \(\textcolor{blue}{\gamma_{T_K}}\) at 520 320
		
		\pinlabel \(\textcolor{darkgreen}{\sigma_2}\) at 142 918
		\pinlabel \(\textcolor{darkgreen}{\sigma_1}\) at 94 918
		\pinlabel \(\textcolor{darkgreen}{\sigma_3}\) at 142 870
		
		\pinlabel \(\tilde{\capR}_1\) at 306 745
		\pinlabel \(\tilde{\capL}_1\) at 210 682
		\pinlabel \(\tilde{\ccon}_2\) at 166 611

		\pinlabel \(\tilde{\s}_2\) at 537 850
		\pinlabel \(\tilde{\sS}_2\) at 591 708
		\pinlabel \(\tilde{\rr}_4\) at 527 609

		\small
		\pinlabel \(\textcolor{darkgreen}{\texttt{1}}\) at 485 259
		\pinlabel \(\textcolor{darkgreen}{\texttt{2}}\) at 485 95
		\tiny
		\pinlabel \(\textcolor{darkgreen}{\texttt{3}}\) at 651 96
		\pinlabel \(\textcolor{darkgreen}{\texttt{4}}\) at 653 260	
		
		\pinlabel \(\textcolor{darkgreen}{q_1}\) at 530 796
		\pinlabel \(\textcolor{darkgreen}{q_1}\) at 486 752
		\pinlabel \(\textcolor{darkgreen}{p_1}\) at 486 796
		\pinlabel \(\textcolor{darkgreen}{p_1}\) at 530 752
		\pinlabel \(\textcolor{darkgreen}{q_4}\) at 606 796
		\pinlabel \(\textcolor{darkgreen}{q_4}\) at 650 752
		
		\pinlabel \(\textcolor{darkgreen}{p_2}\) at 530 676
		\pinlabel \(\textcolor{darkgreen}{p_2}\) at 486 632
		\pinlabel \(\textcolor{darkgreen}{q_2}\) at 486 676
		\pinlabel \(\textcolor{darkgreen}{q_2}\) at 530 632
		\pinlabel \(\textcolor{darkgreen}{p_3}\) at 606 676
		\pinlabel \(\textcolor{darkgreen}{p_3}\) at 650 632
		
		\pinlabel \(\textcolor{darkgreen}{\texttt{2}}\) at 508 894
		\pinlabel \(\textcolor{darkgreen}{\texttt{3}}\) at 628 894
		\pinlabel \(\textcolor{darkgreen}{\texttt{1}}\) at 508 774
		\pinlabel \(\textcolor{darkgreen}{\texttt{4}}\) at 628 774
		\pinlabel \(\textcolor{darkgreen}{\texttt{2}}\) at 508 654
		\pinlabel \(\textcolor{darkgreen}{\texttt{3}}\) at 628 654
		\pinlabel \(\textcolor{darkgreen}{\texttt{1}}\) at 508 534
		\pinlabel \(\textcolor{darkgreen}{\texttt{4}}\) at 628 534
		\pinlabel \(\textcolor{red}{\aaa}\) at 497 731	
		\pinlabel \(\textcolor{red}{\aab}\) at 567 644
		\pinlabel \(\textcolor{red}{\aac}\) at 640 690	
		\pinlabel \(\textcolor{red}{\aad}\) at 566 782
		\endlabellist
		\includegraphics[width=\textwidth]{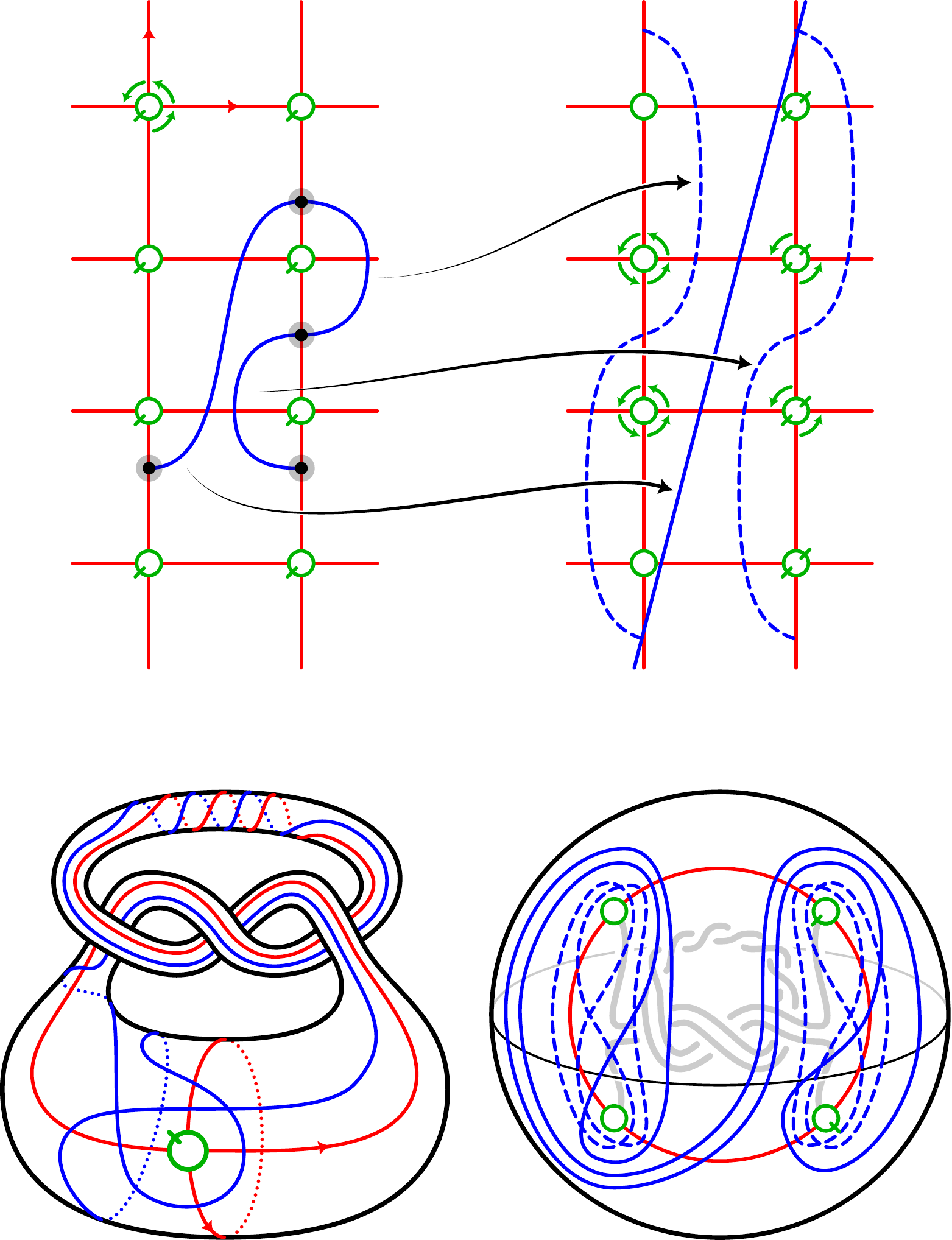}
	\caption{Executive summary of the \ref{thm:main:intro} (\cpageref{thm:main:intro}): 
		To construct \(\gamma_{T_K}\), first cut \(\gamma_K\) along the generators \(\bullet\) of \(\HFKhat(K)\) into curve segments. This always produces a single curve segment \(\ccon_{2\tau(K)}\) of slope \(2\tau(K)\) as well as some number of curve segments \(\capR_{\ell}\) and \(\capL_{\ell}\) that are right and left caps, respectively, of some length \(\ell\in\Z^{>0}\). 
		Then \(\gamma_{T_K}\) contains a rational component \(\rr_{4\tau(K)}\coloneqq\rr(4\tau(K))\) as well as a special component \(\s_{2\ell}\coloneqq\s_{2\ell}(\infty;\I,\II)\) and \(\sS_{2\ell}\coloneqq\s_{2\ell}(\infty;\III,\IV)\) for every \(\capR_{\ell}\) and \(\capL_{\ell}\) in \(\gamma_K\), respectively. 
		Note: In all pictures except the bottom left, we are looking at the manifolds from the outside.
}\label{fig:executive-summary}
\end{figure}

\subsection{%
	\texorpdfstring{%
		A brief review of the multicurve invariant \(\bm{\gamma_K}\)}{%
		A brief review of the multicurve invariant γ\_Κ
	}}
The knot invariant \(\gamma_K\) is a multicurve, a finite set of free homotopy types of closed curves%
\footnote{%
As part of the data of a multicurve, each curve is equipped with an orientation, a local system, and some grading data. 
Here, in the introduction, we will ignore any of these additional curve decorations. 
}, on the one-punctured torus. 
This torus is naturally identified with the boundary of the knot exterior \(S^3\smallsetminus\interior(\nu(K))\) and the puncture can be thought of as some basepoint \(z\) on said boundary.

To describe key properties of \(\gamma_K\), we introduce some notation:
Let \(\mu_K\) denote a meridian of \(K\) 
and \(\lambda_K\) a longitude of \(K\), both of which go through the basepoint \(z\). 
We may assume without loss of generality that \(\gamma_K\) intersects the meridian \(\mu_K\) and longitude \(\lambda_K\) transversally and minimally.  
Then by \cite[Example~6.8]{HRW}, the intersection points of \(\gamma_K\) with \(\mu_K\) generate the knot Floer homology of \(K\):
\[
\HFKhat(K)
\cong
\F\langle
\gamma_K \pitchfork \mu_K
\rangle.
\]
(We will work over the field \(\F\) of two elements throughout this paper.)
These intersection points cut the multicurve \(\gamma_K\) into \textit{curve segments}, which can be characterized by how their ends approach \(\mu_K\) and by how they intersect \(\lambda_K\). 
To make this more precise, let \(\R^2\) be the universal cover of the torus such that the integer lattice \(\Z^2\subset \R^2\) agrees with the preimage of the basepoint \(z\). 
This is illustrated on the left of \cref{fig:executive-summary}. 
The preimage of \(\mu_K\) under this covering map is equal to the union of the vertical lines \(\tilde{\mu}_K\coloneqq\Z\times\R\). 
Given a curve segment \(c\) of \(\gamma_K\), consider a lift \(\tilde{c}\) to \(\R^2\). 
It starts and ends on some connected components \(\tilde{\mu}^+_K\) and \(\tilde{\mu}^-_K\) of \(\tilde{\mu}_K\), respectively.
We now say the curve segment \(c\) is of type 
\[
\begin{cases*}
\capR_{\ell}
&
if \(\tilde{\mu}^+_K=\tilde{\mu}^-_K\) and \(\tilde{c}\) lies to the right of \(\tilde{\mu}^\pm_K\);
\\
\capL_{\ell}
&
if \(\tilde{\mu}^+_K=\tilde{\mu}^-_K\) and \(\tilde{c}\) lies to the left of \(\tilde{\mu}^\pm_K\);
\\
\ccon_\ell
&
if \(\tilde{\mu}^+_K\neq\tilde{\mu}^-_K\) (in which case \(\tilde{c}\) lies between \(\tilde{\mu}^+_K\) and \(\tilde{\mu}^-_K\)).
\end{cases*}
\]
In the first two cases, we call the subscript \(\ell\in\Z\) the \textit{length} of \(c\); in the third case, we call it the \textit{slope} of \(c\). 
In all three cases, \(|\ell|\) equals the number of intersection points of \(c\) with  \(\lambda_K\). 
In the first two cases, this already determines the curve segments up to homotopy (relative to their endpoints), so we choose \(\ell\geq0\). 
In fact, \(\ell>0\) for any such curve segment, since we assume that \(\gamma_K\) intersects \(\mu_K\) minimally.   
In the third case, we choose the sign of \(\ell\) to be positive if and only if the slope of \(\tilde{c}\) is positive (after some homotopy relative endpoints). 
Clearly, the type of a curve segment \(c\) is independent of the choice of lift \(\tilde{c}\) and invariant under homotopy of \(c\) relative endpoints. 

\begin{remark}\label{rem:UV=0}
	The multicurve invariant \(\gamma_K\) is equivalent to a version of knot Floer homology that takes the form of a chain complex over the polynomial ring \(\F[U,V]/(UV=0)\); see for example \cite[Theorem~2]{KWZ_mnemonic}. 
	The curve segments \(\capR_{\ell}\) and \(\capL_{\ell}\) respectively correspond to components \(U^\ell\) and \(V^\ell\) of the differential in this chain complex.
\end{remark}

The first part of the following structure theorem follows from \cite[Theorem~11.26]{LOT}. 
The second part follows from the usual conjugation symmetry on knot Floer homology; see also \cite[Theorem~7]{HRWcompanion}. 

\begin{theorem}\label{thm:structure:HFKcurve}
	For any knot \(K\subset S^3\), the multicurve \(\gamma_K\) contains exactly one curve segment \(\ccon_{2\tau(K)}\), where \(\tau(K)\) is the Ozsváth-Szabó concordance invariant, and every other curve segment is of type \(\capR_{\ell}\) or \(\capL_{\ell}\) for some \(\ell\in\Z^{>0}\).
	Moreover, for any \(\ell\in\Z^{>0}\), 
	\[
	\#\{\text{curve segments in \(\gamma_K\) of type \(\capR_{\ell}\)}\}
	=
	\#\{\text{curve segments in \(\gamma_K\) of type \(\capL_{\ell}\)}\}.
	\]
\end{theorem}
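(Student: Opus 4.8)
\emph{Proof strategy.}
The plan is to obtain both assertions by transporting known structural results about knot Floer homology through the geometric repackaging that defines $\gamma_K$. Recall from \cite{HRW} that the multicurve $\gamma_K=\HFhat(S^3\smallsetminus\interior(\nu(K)))$ is, as algebraic data, equivalent to the type-D structure $\CFD$ of the knot complement, and that (\cref{rem:UV=0}) both are equivalent to the $\F[U,V]/(UV=0)$-complex $C_K$ of $K$, under which --- as recorded there --- curve segments $\capR_\ell$ and $\capL_\ell$ correspond to $U^\ell$- and $V^\ell$-components of the differential.

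For the first statement I would invoke the structure theorem \cite[Theorem~11.26]{LOT}: after a homotopy equivalence, $\CFD(S^3\smallsetminus\interior(\nu(K)))$ decomposes as a direct sum of one \emph{unstable chain} together with one \emph{stable chain} per vertical and per horizontal arrow of a reduced, horizontally and vertically simplified basis of $CFK^-(K)$ (equivalently, of $C_K$). Running the curve reconstruction of \cite{HRW} on each stable chain, a vertical arrow of length $\ell$ yields a curve segment that approaches $\mu_K$ from the same side at both ends and meets $\lambda_K$ exactly $\ell$ times --- a right cap $\capR_\ell$ --- and symmetrically a horizontal arrow of length $\ell$ yields a left cap $\capL_\ell$, consistently with \cref{rem:UV=0}; since the complex is reduced, all of these arrows have strictly positive length, so $\ell\in\Z^{>0}$. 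The remaining summand, the unique unstable chain (it connects the generator of vertical homology to the generator of horizontal homology), produces the sole curve segment whose two ends approach \emph{distinct} lifts of $\mu_K$, i.e.\ the unique $\ccon$-segment; the length of the unstable chain is computed in \cite[Theorem~11.26]{LOT} in terms of $\tau(K)$, and unwinding the reconstruction identifies the slope of this segment as $2\tau(K)$. (Alternatively, the slope of the distinguished component of $\gamma_K$ near the puncture is $2\tau(K)$ directly by the Hanselman--Rasmussen--Watson description of $\tau$, which sidesteps \cite{LOT} for this last point.)

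For the second statement the input is the conjugation symmetry of knot Floer homology, which on $C_K$ interchanges the two filtrations --- i.e.\ swaps $U\leftrightarrow V$ --- and hence carries vertical arrows of length $\ell$ bijectively onto horizontal arrows of length $\ell$; composing with the dictionary above yields the desired length-preserving bijection between $\capR_\ell$- and $\capL_\ell$-segments. Equivalently and more geometrically, by \cite[Theorem~7]{HRWcompanion} this symmetry is realised on the boundary torus by the elliptic involution $\iota$ acting as $-\id$ on first homology: $\iota$ fixes $\gamma_K$ up to homotopy, maps $\mu_K$ and $\lambda_K$ to themselves (reversing orientations, so slopes and lengths are unchanged), and --- being rotation by $\pi$ on the universal cover $\R^2$ --- interchanges the two sides of every lift of $\mu_K$; it therefore swaps left and right caps of each fixed length while fixing the type and slope of the $\ccon$-segment.

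I expect the genuine work to lie entirely in making the algebra-to-curve dictionary precise and correctly normalised: that a vertical (rather than horizontal) arrow of length $\ell$ gives a right cap of length exactly $\ell$ matching \cref{rem:UV=0} and \cref{fig:executive-summary}, that cutting $\gamma_K$ along $\mu_K$ in minimal position really leaves a single $\ccon$-segment, and that the unstable-chain parameter is exactly $2\tau(K)$ rather than a shift or rescaling of it. Once these conventions are pinned down, the first statement is a transcription of \cite[Theorem~11.26]{LOT} and the second a transcription of the conjugation symmetry; the local systems suppressed in this introduction play no role, since the symmetry preserves them.
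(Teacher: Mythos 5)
Your proposal matches the paper's own (very brief) justification exactly: the first assertion is attributed to \cite[Theorem~11.26]{LOT} via the decomposition of $\CFD$ into one unstable chain plus stable chains for the vertical and horizontal arrows, and the second to conjugation symmetry as in \cite[Theorem~7]{HRWcompanion}. Your fleshing-out of the dictionary between arrows and cap types, and your correct identification of where the convention-checking work lies, are consistent with the paper's conventions, so this is essentially the intended argument in expanded form.
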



\subsection{%
	\texorpdfstring{%
		A brief review of the multicurve invariant \(\bm{\gamma_T}\)}{%
		A brief review of the multicurve invariant γ\_Τ
}}
The tangle invariant \(\gamma_T\) is also a multicurve; in this case, the multicurve lives on the four-punctured sphere which is naturally identified with the boundary of the three-dimensional ball containing the tangle minus the four tangle ends. 
Again, it is useful to consider the curves in a certain planar cover \(\R^2\smallsetminus\Z^2\) of the four-punctured sphere, which is illustrated on the top right of \cref{fig:executive-summary}. 
The integer lattice corresponds to the four punctures of the sphere, labelled \(\I\), \(\II\), \(\III\), and \(\IV\), the vertical lines \(\Z\times\R\) are the preimage of the arcs connecting tangle ends (\(\I\) and  \(\II\)) and (\(\III\) and \(\IV\)) and similarly, the horizontal lines \(\R\times\Z\) correspond to the arcs connecting (\(\I\) and \(\IV\)) and (\(\II\)~and~\(\III\)).

The components of the multicurve \(\gamma_T\) are completely classified \cite[Theorem~0.5]{pqSym}. 
We do not need the full classification here. 
We only describe those curves that show up as components of~\(\gamma_{T_K}\). 
First, an embedded simple closed curve in the four-punctured sphere is called a \textit{rational curve of slope \(k\in\Z\)}, denoted by \(\rr(k)\) if it lifts to a straight line of slope \(k\) in the cover \(\R^2\smallsetminus\Z^2\). 
Second, let \((\mathtt{i},\mathtt{j})=(\I,\II)\) or \((\III,\IV)\). 
Consider an infinite straight line in \(\R^2\smallsetminus\Z^2\) of slope \(\infty\) going through some lattice points corresponding to the punctures \(\mathtt{i}\) and \(\mathtt{j}\). 
The lattice points divide this line into intervals of equal length. For a fixed integer \(\ell\in2\Z^{>0}\), let us mark every \(\ell\)th interval of the line.
Then consider a small equivariant push-off of this line such that it intersects only the marked intervals and
each of them exactly once. 
Finally, let \(\s_{\ell}(\infty;\mathtt{i},\mathtt{j})\) be the immersed, primitive curve in the four-punctured sphere that lifts to this push-off. 
We call \(\s_{\ell}(\infty;\mathtt{i},\mathtt{j})\) the \textit{special curve of slope \(\infty\) and length \(\ell\)} through
the punctures \(\mathtt{i}\) and \(\mathtt{j}\). 
In this paper, we will use the following simplified notation:
\[
\rr_k\coloneqq \rr(k),
\quad
\s_{\ell}\coloneqq\s_{\ell}(\infty;\I,\II),
\quad\text{and}\quad
\sS_{\ell}\coloneqq\s_{\ell}(\infty;\III,\IV).
\]

The following structure theorem for \(\gamma_{T_K}\) is a special case of a general result for cap-trivial tangles, analogous in both statement and proof to \cite[Theorem~3.1]{KWZ_strong_inversions}. 

\begin{theorem}\label{thm:structure:HFTcurve}
	For any knot \(K\subset S^3\), the multicurve \(\gamma_{T_K}\) contains exactly one curve segment \(\rr_{k}\), for some \(k\in2\Z\), and every other curve segment is of type \(\s_{\ell}\) or \(\sS_{\ell}\) for some \(\ell\in2\Z^{>0}\).
	Moreover, for any \(\ell\in2\Z^{>0}\), 
	\[
	\#\{\text{components \(\s_{\ell}\) of \(\gamma_{T_K}\)}\}
	=
	\#\{\text{components \(\sS_{\ell}\) of \(\gamma_{T_K}\)}\}.
	\]
\end{theorem}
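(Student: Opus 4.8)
The plan is to follow the proof of \cite[Theorem~3.1]{KWZ_strong_inversions} essentially verbatim, feeding in three inputs: the classification of the components of the Heegaard Floer multicurve $\HFT(T)$ of an arbitrary Conway tangle $T$ \cite[Theorem~0.5]{pqSym}, according to which every component is a rational curve of some slope in $\Q\cup\{\infty\}$ or a special curve carrying a local system; the gluing theorem for $\HFT$ \cite{pqMod}, which identifies $\HFKhat(T_K\cup Q)$ with the Lagrangian Floer homology $\HFT(T_K)\boxtimes\HFT(Q)$; and the fact that $T_K$ is \emph{cap-trivial} — there is a rational tangle $Q$, namely the one that fuses the two parallel strands of $T_K$ into a single unknotted circle, for which $T_K\cup Q$ is the unknot $U$, and whose own multicurve is the rational curve $\rr(\infty)$.

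The first step is to exploit this single capping. By the gluing theorem, $T_K\cup Q=U$ forces the Floer pairing of $\gamma_{T_K}$ with $\gamma_Q=\rr(\infty)$ to be of the smallest possible size (namely $\dim_\F\HFKhat(U)=1$, up to the normalisation built into the gluing theorem). Since $\rr(\infty)$ separates the punctures $\{\I,\II\}$ from $\{\III,\IV\}$, and since on the four-punctured sphere every closed curve meets a simple closed curve an even number of times, any of the following would push this pairing past the bound: a rational component of non-integer slope; a second rational component; a special component not of type $\s_\ell$ or $\sS_\ell$ (i.e. of slope $\ne\infty$, or through $\{\I,\IV\}$ or $\{\II,\III\}$); a non-trivial local system on any component. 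Hence $\gamma_{T_K}$ consists of a single rational curve $\rr_k$ with $k\in\Z$, together with components of type $\s_\ell$ and $\sS_\ell$, whose lengths lie in $2\Z^{>0}$ by the very definition of the special curves. This already yields the structural part of the statement, apart from the parity $k\in2\Z$.

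For the two remaining points I would argue exactly as in \cref{thm:structure:HFKcurve}. The equality $\#\{\s_\ell\}=\#\{\sS_\ell\}$ follows, just as its counterpart there follows from the conjugation symmetry of $\HFKhat$, from the conjugation symmetry of $\HFT$ \cite{pqSym}: it is realised by the involution of the four-punctured sphere interchanging the puncture pairs $\{\I,\II\}$ and $\{\III,\IV\}$; this involution preserves $\gamma_{T_K}$, fixes $\rr_k$, and swaps $\s_\ell$ with $\sS_\ell$. For the parity $k\in2\Z$ I would feed in a second, well-chosen capping of $T_K$ and compare ranks modulo $4$: here one uses that $\gamma_{T_K}$ meets any simple closed curve on the sphere evenly, that after pairing off the special components by the above symmetry their total contribution is divisible by $4$, and that the contribution of $\rr_k$ is twice a quantity linear in $k$, so that the known rank of $\HFKhat$ of the capped-off knot or link forces $k$ to be even. (That in fact $k=4\tau(K)$ then follows from the gluing theorem applied to further cappings, but this refinement is not needed for the statement as phrased.)

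I expect the genuinely delicate step to be the second paragraph — and, relatedly, the modulo-$4$ count — that is, reading off the complete list of admissible components (and the parity) from a single numerical identity, with the correct treatment of local systems, of the immersed special curves, and of the normalisation constants in the gluing theorem. This is precisely the bookkeeping carried out in \cite[\S3]{KWZ_strong_inversions}, and the plan is to transcribe it, the only genuinely new ingredient being the elementary observation that $T_K$ is cap-trivial.
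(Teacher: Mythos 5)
Your proposal takes essentially the same route as the paper's proof: cap off with the slope-$\infty$ rational tangle, compute the resulting Floer pairing via the gluing theorem (it equals \(\F^2\otimes\HFKhat(U)\cong\F^2\)), combine this with the classification of components \cite[Theorem~0.5]{pqSym}, and deduce the \(\s_{\ell}\)/\(\sS_{\ell}\) balance from conjugation symmetry \cite[Theorem~0.10]{pqSym}. Two deviations are worth recording. For the parity \(k\in2\Z\), the paper does not run a second pairing; it invokes the detection of tangle connectivity \cite[Observation~6.1]{pqMod}: \(T_K\) has horizontal connectivity, so its rational component must have even slope. Your mod-4 count is a workable substitute---pair with \(\rr_{2t+1}\); each special component of length \(\ell\in2\Z^{>0}\) contributes \(2\ell\equiv 0 \pmod 4\) by \cref{lem:pairing_linear_curves:dimension_formula}, the rational component contributes \(2|2t+1-k|\), and the total is twice the odd integer \(\dim\HFKhat(C_{2,2t+1}(K))\); note that you do not need to pair off the special components by symmetry first, since each one individually contributes a multiple of 4---but it is more roundabout than the connectivity argument.

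One claim in your second paragraph is, however, wrong: a non-trivial local system on a special component of slope \(\infty\) through \(\{\I,\II\}\) or \(\{\III,\IV\}\) does \emph{not} push the pairing past the bound, because such a component can be isotoped off \(\rr(\infty)\) and therefore contributes \(0\) to \(\HF(\rr(\infty),\gamma_{T_K})\) whatever its local system. The pairing with \(\rr(\infty)\) only pins down the local system on the rational component (and excludes a second rational component, non-integer slopes, and specials of slope \(\neq\infty\)). Triviality of the local systems on the special components has to be taken from the classification theorem itself, which is how the paper's proof handles it; as written, your argument leaves this point open.
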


\subsection{The Main Theorem}

The similarity between the two structure theorems for \(\gamma_K\) and \(\gamma_{T_K}\) suggests a close relationship between the two invariants.
And indeed, it is as simple as it can possibly be!

\begin{named}{Main Theorem}\label{thm:main:intro}
	The components of \(\gamma_{T_K}\) are in one-to-one correspondence with the curve segments obtained by splitting the curve \(\gamma_K\) along its intersection points with the meridian \(\mu_K\).
	More specifically, we have the following correspondence:
	\[
	\text{in~}\gamma_K~
	\left\{
	\begin{array}{ccc}
	\ccon_{2\tau(K)}
	&\longleftrightarrow&
	\rr_{4\tau(K)}
	\\
	\capR_{\ell} 
	&\longleftrightarrow&
	\s_{2\ell}
	\\
	\capL_{\ell}
	&\longleftrightarrow&
	\sS_{2\ell}
	\end{array}
	\right\}
	~\text{in~}\gamma_{T_K}.
	\]
\end{named}
For a relatively bigraded version of this result, see \cref{thm:main:graded}.

\begin{example}\label{exa:main:intro}
	\cref{fig:executive-summary} illustrates the \ref{thm:main:intro} for the right-handed trefoil knot \(K=T_{2,3}\).
	The curve \(\gamma_K\) decomposes into three curve segments, namely \(\ccon_{2}\), \(\capR_{1}\), and \(\capL_{1}\). 
	Hence, \(\gamma_{T_K}\) consists of three components: \(\rr_4\), \(\s_{2}\), and \(\sS_{2}\). 
	Interestingly, as an ungraded curve, this agrees with the invariant of the quotient tangle \(T_{K,h}\) of \(K\) under the unique strong inversion \(h\) on \(K\) \cite[Figure~4]{KWZ_strong_inversions}. 
	(The tangle \(T_{K,h}\) is equal to the \((2,-3)\)-pretzel tangle, up to some number of twists, and its invariant \(\gamma_T\) was computed in \cite[Example~2.26]{pqMod}.)
	However, as graded invariants, \(\gamma_{T_K}\) and \(\gamma_{T_{K,h}}\) are distinct; see \cref{exa:main:graded}. 
\end{example}

\begin{remark}
The multicurve invariants \(\gamma_K\) are very computable thanks to Szabó's computer program \cite{hfkcalc} and Hanselman's implementation of the arrow sliding algorithm \cite{hanselman_program}. 
Hanselman has compiled a list of the invariants for all knots up to 15 crossings, which is available on github along with the source code of his program. 
While there also exists software \cite{PQM.m} for computing the tangle invariant \(\gamma_{T}\), it is neither as efficient nor as practical. 
Consequently, \(\gamma_T\) had so far only been known for very few tangles \(T\) other than the family of two-stranded pretzel tangles considered in \cite[Theorem~6.9]{pqMod}.
The \ref{thm:main:intro} now provides an infinite family of new tangles for which the invariants are very easy to compute. 
The python script \cite{hf2hft} automates this computation, taking as input the multicurves \(\gamma_K\) in the format used in Hanselman's database \cite{hanselman_program}. 
\end{remark}

\subsection{\texorpdfstring{A reformulation of the \ref{thm:main:intro}}{A reformulation of the Main Theorem}}
Recall the following well-known structure theorem for \(\HFKminus(K)\), a version of knot Floer homology taking the form of a bigraded module over \(\F[U]\); see for example \cite[Chapter~7]{GridHomologyBook}. 

\begin{theorem}\label{thm:structure:HFKminus}
	For any knot \(K\subset S^3\), there exist integers \(n\in\Z^{\geq0}\) and \(\ell_1,\dots,\ell_n\in\Z^{>0}\) such that
	\[
	\HFKminus(K)
	\cong
	\F[U]
	\oplus
	\bigoplus_{i=1}^{n}
	\F[U]/U^{\ell_i}.
	\]	
	The integers \(\ell_1,\dots,\ell_n\) are unique up to permutation.
\end{theorem}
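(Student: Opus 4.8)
The plan is to obtain this from the classification of finitely generated modules over a principal ideal domain, reducing the whole statement to pinning down the two quantities that classification leaves free: the free rank, and which irreducible polynomials occur in the torsion. First I would record that \(\HFKminus(K)\) is, by construction, the homology of a chain complex \((C,\partial)\) of finitely generated free \(\F[U]\)-modules --- for instance a minus grid complex of a grid diagram for \(K\), or the minus knot Floer complex of a doubly-pointed Heegaard diagram; see \cite[Chapter~7]{GridHomologyBook}. Consequently \(\HFKminus(K)\) is a finitely generated module over the principal ideal domain \(\F[U]\), and the classification theorem produces a decomposition
\[
\HFKminus(K)\;\cong\;\F[U]^{a}\;\oplus\;\bigoplus_{j=1}^{m}\F[U]/(p_j^{a_j}),
\]
where \(a\in\Z^{\geq0}\), each \(p_j\in\F[U]\) is irreducible, each \(a_j\in\Z^{>0}\), and the datum consisting of \(a\) together with the multiset \(\{(p_j,a_j)\}_j\) is uniquely determined by the isomorphism type of the module. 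The theorem will follow once I show that \(a=1\) and that \(p_j=U\) for every \(j\): renaming \(a_j=\ell_i\) and \(m=n\) then gives the asserted form, and uniqueness of \(n\) and of the \(\ell_i\) up to permutation is exactly the uniqueness clause of the classification theorem.

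To control the free rank and exclude irreducibles other than \(U\), I would pass to the localisation \(\HFKminus(K)\otimes_{\F[U]}\F[U,U^{-1}]\). As \(\F[U,U^{-1}]\) is flat over \(\F[U]\), this localisation is naturally isomorphic to the homology of \(C\otimes_{\F[U]}\F[U,U^{-1}]\), the ``\(U\)-inverted'' complex; and this complex is knot-independent, with homology \(\F[U,U^{-1}]\), being a form of the computation \(HF^{\infty}(S^3)\cong\F[U,U^{-1}]\) (see again \cite[Chapter~7]{GridHomologyBook}). On the purely algebraic side, localising at \(U\) annihilates precisely the \(U\)-power-torsion summands --- a summand \(\F[U]/(U^{a_j})\) becomes zero, whereas \(\F[U]/(p_j^{a_j})\) with \(p_j\) coprime to \(U\) is unchanged, because \(U\) already acts invertibly on it. Equating the two resulting descriptions of the localised module gives
\[
\F[U,U^{-1}]^{a}\;\oplus\;\bigoplus_{j\,:\,p_j\neq U}\F[U,U^{-1}]/(p_j^{a_j})\;\cong\;\F[U,U^{-1}],
\]
and comparing free ranks over the PID \(\F[U,U^{-1}]\) forces \(a=1\), while comparing torsion forces \(\{\,j:p_j\neq U\,\}=\varnothing\). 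Hence every \(p_j\) equals \(U\), as needed.

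I expect the only genuinely non-formal ingredient to be the input that the \(U\)-inverted complex has homology \(\F[U,U^{-1}]\) --- the statement that inverting \(U\) destroys all knot-theoretic information and returns (the \(\infty\)-flavoured) Floer homology of \(S^3\). This is the one place where something beyond commutative algebra enters, and it is precisely what the cited references establish (in the grid model it also admits a direct computation on the unblocked grid complex); everything else --- the structure theorem over a PID, exactness of localisation, and the bookkeeping of which summands survive --- is routine. I would also note in passing that the entire argument can be carried out compatibly with the Maslov grading, with \(U\) acting in degree \(-2\), so that one in fact obtains a graded refinement, although only the ungraded module statement is required in what follows.
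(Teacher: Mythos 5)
The paper does not actually prove this statement: it is quoted as a well-known structure theorem with a pointer to \cite[Chapter~7]{GridHomologyBook}. Your argument is correct and is essentially the standard proof found there --- finite generation over the PID \(\F[U]\), the classification theorem for modules over a PID, and a localization computation to pin down the free rank and exclude torsion coprime to \(U\).

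One step deserves more care than your write-up suggests. The identification \(U^{-1}\HFKminus(K)\cong\F[U,U^{-1}]\) is the genuine geometric content of the theorem, and it is not literally ``a form of \(\HF^{\infty}(S^3)\cong\F[U,U^{-1}]\)'': the differential of \(CFK^-\) (or of the unblocked grid complex \(GC^-\)) only counts disks, respectively rectangles, avoiding the second family of basepoints, so inverting \(U\) does \emph{not} produce the complex computing \(\HF^{\infty}(S^3)\). The identification is true, but it needs its own argument --- e.g.\ the stabilization of \(GH^-_d\) under multiplication by \(U\) in sufficiently negative Maslov gradings, or a filtration comparison with the complex for \(S^3\) whose differential is allowed to cross all basepoints; this is exactly what the cited chapter supplies. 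Separately, note that the exclusion of irreducibles \(p_j\neq U\) is purely formal once one uses that \(\HFKminus(K)\) is bigraded with \(U\) acting with nonzero bidegree \((-2,-1)\) in (Maslov, Alexander) gradings: a homogeneous torsion element then has a monomial annihilator, so the torsion submodule is automatically \(U\)-primary. With that observation the localization input is only needed to determine the free rank.
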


The module \(\HFKminus(K)\) is generally not sufficient to determine \(\gamma_K\). 
However, the curve segments of \(\gamma_K\) can be determined from \(\HFKminus(K)\). 
First, the Alexander grading of the generator of \(\F[U]\) is equal to \(-\tau(K)\), by definition.
Second, using conjugation symmetry of knot Floer homology, one can show that for every \(U\)-torsion summand \(\F[U]/U^{\ell_i}\) in \(\HFKminus(K)\), \(\gamma_K\) contains a pair of curve segments \(\capR_{\ell_i}\) and \(\capL_{\ell_i}\).
We therefore obtain the following reformulation of the \ref{thm:main:intro}.

\begin{theorem}\label{thm:main:FUreformulation}
	Given a knot \(K\subset S^3\), let \(n\in\Z^{\geq0}\) and \(\ell_1,\dots,\ell_n\in\Z^{>0}\) be the integers determined by \cref{thm:structure:HFKminus}. 
	Then
	\[
		\pushQED{\qed} 
		\gamma_{T_K}
		=
		\rr_{4\tau(K)}
		\cup 
		\bigcup_{i=1}^{n}
		\s_{2\ell_i}\cup \sS_{2\ell_i}.
		\qedhere
		\popQED
	\]
\end{theorem}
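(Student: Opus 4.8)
The strategy is to obtain this as a purely formal consequence of the \ref{thm:main:intro} together with the standard dictionary between the curve segments of \(\gamma_K\) and the decomposition of \(\HFKminus(K)\) in \cref{thm:structure:HFKminus}. The \ref{thm:main:intro} already tells us that \(\gamma_{T_K}\) is completely determined by the multiset of curve-segment types into which \(\gamma_K\) decomposes when it is cut along its intersection points with \(\mu_K\): the single \(\ccon\)-segment produces the rational component, each \(\capR_{\ell}\) produces an \(\s_{2\ell}\), and each \(\capL_{\ell}\) produces an \(\sS_{2\ell}\). So the only thing left to do is to re-express that multiset of curve segments in terms of the integers \(n\) and \(\ell_1,\dots,\ell_n\) from \cref{thm:structure:HFKminus}.

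The free summand is the easy half. By \cref{thm:structure:HFKcurve} there is exactly one \(\ccon\)-segment in \(\gamma_K\), and its slope is \(2\tau(K)\); equivalently, the generator of the \(\F[U]\)-tower in \cref{thm:structure:HFKminus} lies in Alexander grading \(-\tau(K)\), which is the definition of \(\tau(K)\). Under the \ref{thm:main:intro} this segment \(\ccon_{2\tau(K)}\) corresponds to the single rational component \(\rr_{4\tau(K)}\).

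For the torsion summands I would use the equivalence, recalled in \cref{rem:UV=0}, between \(\gamma_K\) and the knot Floer complex over \(\F[U,V]/(UV)\), under which the segments \(\capR_{\ell}\) and \(\capL_{\ell}\) correspond to length-\(\ell\) differential arrows labelled \(U^{\ell}\) and \(V^{\ell}\), respectively. Specialising \(V=0\) (and passing to the associated graded object, if needed) turns the \(U^{\ell}\)-arrow coming from a \(\capR_{\ell}\)-segment into a \(U\)-torsion summand \(\F[U]/U^{\ell}\) of \(\HFKminus(K)\), while the tower generator survives as the free part \(\F[U]\); conversely, conjugation symmetry of knot Floer homology interchanges \(U\) and \(V\), hence \(\capR_{\ell}\leftrightarrow\capL_{\ell}\), which is exactly the equality of the numbers of \(\capR_{\ell}\)- and \(\capL_{\ell}\)-segments asserted in \cref{thm:structure:HFKcurve}. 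It follows that \(n\) equals the number of \(\capR\)-segments of \(\gamma_K\) (equivalently, the number of \(\capL\)-segments), and that the multiset \(\{\ell_1,\dots,\ell_n\}\) is exactly the multiset of lengths of the \(\capR\)-segments, which also coincides with the multiset of lengths of the \(\capL\)-segments. Feeding this description of \(\gamma_K\) into the \ref{thm:main:intro} gives
\[
\gamma_{T_K}
=
\rr_{4\tau(K)}
\cup
\bigcup_{i=1}^{n}\bigl(\s_{2\ell_i}\cup\sS_{2\ell_i}\bigr),
\]
with multiplicities understood as in the \ref{thm:main:intro}.

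The one genuinely nontrivial input is the dictionary invoked in the last step — that the \(U\)-torsion orders of \(\HFKminus(K)\) are exactly the lengths of the cap curve segments of \(\gamma_K\), each length being reproduced once by the \(\capR\)-segments and once by the \(\capL\)-segments. This is a known consequence of the relationship between the various flavours of knot Floer homology together with conjugation symmetry, so I expect the write-up of this step to consist mainly of citing the relevant statements and matching conventions rather than proving anything new; all of the geometric content, and in particular the factors of two relating \(\ell\) to \(2\ell\) and \(\tau(K)\) to \(2\tau(K)\), is already carried by the \ref{thm:main:intro}.
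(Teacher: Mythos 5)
Your proposal is correct and follows essentially the same route as the paper, which states this theorem with a built-in \(\qed\) as an immediate corollary of the \ref{thm:main:intro} together with the preceding discussion: the free summand's Alexander grading identifies the slope \(2\tau(K)\) of the unique \(\ccon\)-segment, and conjugation symmetry pairs each torsion summand \(\F[U]/U^{\ell_i}\) with a pair of segments \(\capR_{\ell_i}\), \(\capL_{\ell_i}\). Your extra detail via \cref{rem:UV=0} (identifying \(\capR_{\ell}\)-segments with \(U^{\ell}\)-arrows and specialising \(V=0\)) is exactly the standard dictionary the paper is implicitly invoking.
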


\subsection{Consequences and questions: satellites, thinness, and A-links}\label{subsec:intro:thinness}
In 2021, Steve Boyer, Cameron Gordon, and Ying Hu showed that the Khovanov homology of any satellite knot is not thin, i.e.\ it is supported in more than one \(\delta\)-grading \cite{boyer2021slope}. 
Up to now, the corresponding result for knot Floer homology was only known to be true for some specific patterns such as cables and the Mazur pattern \cite{dey2019cable,petkova2021twisted}. 
In fact, it is false in general: 

\begin{theorem}\label{thm:intro:thin-satellite}
	There exists a satellite knot \(K\subset S^3\) for which \(\HFKhat(K)\) is thin. 
\end{theorem}

\begin{figure}[t]
	\centering	
	\includegraphics[width=0.29\textwidth]{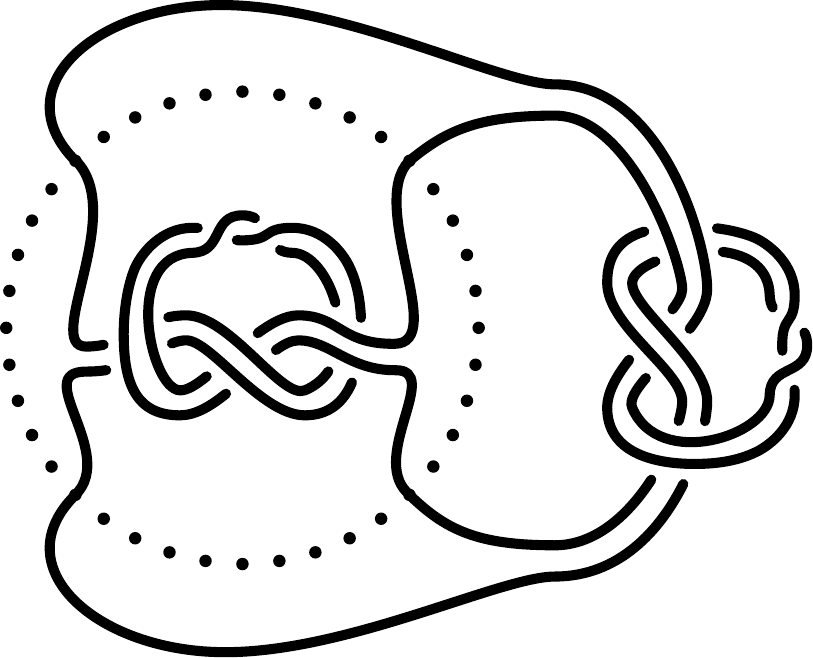}
	\caption{%
		A satellite knot with thin knot Floer homology. 
		The dotted circle indicates a Conway sphere splitting the knot into two tangles that agree with the double tangles of the left- and right-handed trefoil knots up to some twisting.
	}\label{fig:thin-satellite}
\end{figure}

The knot in question is shown in \cref{fig:thin-satellite}. 
At the time of writing, I do not know if this is the only such example.

\begin{question}\label{que:intro:thin-satellite}
	Is this the only satellite knot with thin knot Floer homology?
\end{question}

By applying thinness criteria for unions of Conway tangles from joint work with Artem Kotelskiy and Liam Watson \cite{KWZ_thin} to the graded version of the \ref{thm:main:intro} (\cref{thm:main:graded}), we can give a partial answer to \cref{que:intro:thin-satellite}:
\begin{proposition}\label{prop:intro:thick-satellites}
	Let \(K\subset S^3\) be a knot and \(P\) a pattern of wrapping number 2. 
	Suppose \(K\) is neither the unknot nor a trefoil knot.
	Then \(\HFKhat(P(K))\) is not thin.
\end{proposition}

Following \cite{KWZ_thin}, we call links whose knot Floer homology is supported in \(\delta\)-gradings of the same parity \textit{Heegaard Floer A-links}, or simply \textit{A-links} for short.
The set of A-links contains all alternating links, and more generally the set of thin links as a proper subset. 
This definition is inspired by the definition of L-spaces, which include all lens spaces \cite{HFKlens}. 
Recall that an L-space knot is a knot that admits any non-trivial L-space surgery.
Similar to \cref{prop:intro:thick-satellites}, we show:

\begin{proposition}\label{prop:A-knot-satellites}
	Let \(K\subset S^3\) be a knot and \(P\) a pattern of wrapping number 2. 
	Suppose \(K\) is not an L-space knot. 
	Then the satellite knot \(P(K)\) is not an A-link. 
\end{proposition}

Given \(\nicefrac{p}{q}\in\QPI\), let \(Q_{p/q}\) denote the rational tangle of slope \(\nicefrac{p}{q}\). 
We define the \textit{\(\nicefrac{p}{q}\)-rational filling} \(T(\nicefrac{p}{q})\) of a Conway tangle \(T\) as the union \(Q_{-p/q}\cup T\). 
If a rational filling of a tangle is an A-link, we call it a \textit{rational A-link filling} \cite{KWZ_thin}. 
Note that for any knot \(K\), \(T_K(\infty)\) is the unknot, so \(T_K\) always admits at least one rational A-link filling.
This is analogous to the fact that there is at least one L-space surgery for any knot \(K\subset S^3\), namely the \(\infty\)-surgery. 
This analogy between rational A-link fillings and L-space surgeries goes much further:

\begin{theorem}\label{thm:A-links}
	A knot \(K\subset S^3\) admits a non-trivial L-space surgery if and only if its double tangle \(T_K\) admits a non-trivial rational A-link filling.
\end{theorem}
We compare the relevant filling slopes in \cref{thm:comparison:fillings}.

\subsection{Consequences and questions: growth under cabling}\label{subsec:intro:growth}
An open question about the Heegaard Floer tangle invariant \(\gamma_T\) (in fact, any of the multicurve invariants now existing in low-dimensional topology) is how to interpret the number of their components.  
Thanks to the \ref{thm:main:intro}, double tangles \(T=T_K\) are the first family of tangles for which we can give a fairly satisfying answer:

\begin{corollary}
	For any knot \(K\subset S^3\), 
	\[
	\pushQED{\qed} 
	\#\{\text{connected components in \(\gamma_{T_K}\)}\}
	=
	\dim\HFKhat(K).
	\qedhere
	\popQED
	\]
\end{corollary}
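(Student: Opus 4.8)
The plan is to deduce this corollary directly from the \ref{thm:main:intro} together with the identification of $\HFKhat(K)$ with the intersection points $\gamma_K \pitchfork \mu_K$ recalled in the introduction. First I would note that the number of curve segments obtained by cutting $\gamma_K$ along its intersection points with $\mu_K$ equals the number of those intersection points: each intersection point is an endpoint shared by exactly two segment-ends, and each segment has exactly two ends, so a simple count (an Euler-characteristic-style bookkeeping on the union of arcs and cut points) gives
\[
\#\{\text{curve segments of } \gamma_K\}
=
\#(\gamma_K \pitchfork \mu_K).
\]
Here one uses that $\gamma_K$ is a disjoint union of closed curves meeting $\mu_K$ transversally and minimally, so every component is cut into at least one segment and no segment is a full closed loop disjoint from $\mu_K$ (minimality, together with \cref{thm:structure:HFKcurve}, rules this out).

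Next I would invoke the \ref{thm:main:intro}, which sets up a one-to-one correspondence between the components of $\gamma_{T_K}$ and exactly these curve segments of $\gamma_K$ (the single $\ccon_{2\tau(K)}$ corresponding to $\rr_{4\tau(K)}$, and each $\capR_\ell$, $\capL_\ell$ to $\s_{2\ell}$, $\sS_{2\ell}$ respectively). Hence
\[
\#\{\text{connected components in } \gamma_{T_K}\}
=
\#\{\text{curve segments of } \gamma_K\}
=
\#(\gamma_K \pitchfork \mu_K).
\]
Finally, combining this with the isomorphism $\HFKhat(K) \cong \F\langle \gamma_K \pitchfork \mu_K\rangle$ from \cite[Example~6.8]{HRW} yields $\#\{\text{components of }\gamma_{T_K}\} = \dim_{\F}\HFKhat(K)$, as claimed.

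I do not expect any serious obstacle; the only point requiring a moment's care is the combinatorial claim that cutting a multicurve along $n$ points on $\mu_K$ produces exactly $n$ segments, which is where the minimality of the intersection and the absence of $\mu_K$-disjoint components (via \cref{thm:structure:HFKcurve}) are genuinely used. Alternatively, one could bypass this count entirely and argue through \cref{thm:main:FUreformulation} and \cref{thm:structure:HFKminus}: the latter gives $\dim_\F \HFKhat(K) = 1 + 2\sum_{i=1}^n \ell_i$ is \emph{not} what we want — rather one must count summands weighted by generators, so this route is less direct, and I would keep the segment-counting argument above as the cleanest proof.
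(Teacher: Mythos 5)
Your argument is correct and is exactly the one the paper intends: the corollary carries no separate proof precisely because it follows immediately from the \ref{thm:main:intro} via the observation that cutting $\gamma_K$ along its $n$ intersection points with $\mu_K$ yields $n$ curve segments, combined with the identification $\HFKhat(K)\cong\F\langle\gamma_K\pitchfork\mu_K\rangle$. (One small slip in the aside you yourself dismiss: $\dim\HFKhat(K)=1+2n$ rather than $1+2\sum_i\ell_i$, since each torsion summand of $\HFKminus(K)$ contributes exactly two generators to $\HFKhat(K)$, so the alternative route via \cref{thm:main:FUreformulation} and \cref{thm:structure:HFKminus} would in fact also work directly --- but this does not affect your main argument.)
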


This has the following consequence for the growth of knot Floer homology of cable knots \(C_{2,2t+1}(K)\) of winding number 2.


\begin{proposition}\label{prop:growth:HFK:knots}
	Given a knot \(K\subset S^3\), let \(d=\dim\HFKhat(K)\) and let \(\overline{\ell}\) be the average torsion order of \(\HFKminus(K)\), i.e.\ the arithmetic mean of the integers \(\ell_i\) in \cref{thm:structure:HFKminus,thm:main:FUreformulation}.
	Then for any \(t\in\Z\),
	\[
	\dim \HFKhat(C_{2,2t+1}(K))
	=
	2(d-1)\overline{\ell}+|2t+1-4\tau(K)|.
	\]
	In particular,
	\[
	2 (d-1) \ellmax
	+
	|2t+1-4\tau(K)|
	\geq
	\dim \HFKhat(C_{2,2t+1}(K))
	\geq
	2d-1,
	\]
	where \(\ellmax\) is the maximum torsion order of \(\HFKminus(K)\). 
\end{proposition}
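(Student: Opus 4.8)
The plan is to compute \(\dim\HFKhat(C_{2,2t+1}(K))\) via the pairing theorem for the tangle invariant \(\gamma_T\). As recalled in the introduction, the cable decomposes as \(C_{2,2t+1}(K)=T_K\cup T_P\), where \(T_K\) is the double tangle of \(K\) and \(T_P\) is the Conway tangle of the cabling pattern \(P=C_{2,2t+1}\). By the pairing theorem of \cite{pqMod}, \(\dim\HFKhat(T_K\cup T_P)\) is computed from the two multicurves \(\gamma_{T_K}\) and \(\gamma_{T_P}\) on the four-punctured sphere, additively over the components of \(\gamma_{T_K}\). Since \(\gamma_{T_K}=\rr_{4\tau(K)}\cup\bigcup_{i=1}^{n}(\s_{2\ell_i}\cup\sS_{2\ell_i})\) by \cref{thm:main:FUreformulation}, everything reduces to identifying \(\gamma_{T_P}\) and then computing its pairing with a rational curve and with a special curve of slope \(\infty\).

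First I would pin down \(\gamma_{T_P}\). The cabling tangle \(T_P\) is rational, so \(\gamma_{T_P}=\rr_m\) is a single rational curve; taking \(K\) to be the unknot, where \(C_{2,2t+1}(U)=T_{2,2t+1}\) and \(\gamma_{T_U}=\rr_0\) (by \cref{thm:main:FUreformulation} with \(K=U\)), forces \(|m|=\dim\HFKhat(T_{2,2t+1})=|2t+1|\), and matching the framing conventions — consistently with the sign appearing in \(\rr_{4\tau(K)}\) rather than \(\rr_{-4\tau(K)}\) — gives \(\gamma_{T_P}=\rr_{2t+1}\). Next, I would invoke the intersection computations from \cite{pqMod,pqSym}: two rational curves \(\rr_a\) and \(\rr_b\) contribute \(|a-b|\) to the pairing, while a special curve \(\s_{2\ell}\) or \(\sS_{2\ell}\) of slope \(\infty\) and length \(2\ell\) contributes exactly \(2\ell\) against any rational curve \(\rr_b\) of finite slope, independently of \(b\). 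Because \(4\tau(K)\) is even while \(2t+1\) is odd, and a finite slope never equals \(\infty\), no two of the curves being paired are homotopic, so these formulas apply with no degenerate cases. Summing the contributions,
\[
\dim\HFKhat(C_{2,2t+1}(K))
=
\bigl|4\tau(K)-(2t+1)\bigr|
+
\sum_{i=1}^{n}\bigl(2\ell_i+2\ell_i\bigr)
=
|2t+1-4\tau(K)|+4\sum_{i=1}^{n}\ell_i .
\]

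It then remains to rewrite \(4\sum_{i=1}^{n}\ell_i\). By \cite[Example~6.8]{HRW}, \(d=\dim\HFKhat(K)\) equals the number of intersection points of \(\gamma_K\) with \(\mu_K\), hence the number of curve segments of \(\gamma_K\); by \cref{thm:structure:HFKcurve} these consist of the single segment \(\ccon_{2\tau(K)}\) together with \(n\) segments of type \(\capR_{\ell}\) and \(n\) of type \(\capL_{\ell}\), so \(d=2n+1\) and \(d-1=2n\). Since \(\overline{\ell}=\tfrac1n\sum_{i=1}^{n}\ell_i\) by definition, \(4\sum_{i=1}^{n}\ell_i=2\cdot 2n\cdot\overline{\ell}=2(d-1)\overline{\ell}\), which is the displayed formula. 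The two inequalities follow immediately: \(\ell_i\le\ellmax\) for all \(i\) gives \(4\sum\ell_i\le 4n\,\ellmax=2(d-1)\ellmax\), while \(\ell_i\ge 1\) gives \(4\sum\ell_i\ge 4n=2(d-1)\); and \(|2t+1-4\tau(K)|\ge 1\) by parity, so \(\dim\HFKhat(C_{2,2t+1}(K))\ge 2(d-1)+1=2d-1\).

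The main obstacle is the geometric input: identifying the cabling tangle \(T_P\) and computing \(\gamma_{T_P}\) with the correct framing normalization, and establishing the two intersection counts \(|a-b|\) and \(2\ell\). The first is a bookkeeping exercise once the satellite conventions used to build \(T_K\) and \(T_P\) are fixed, with the unknot calibration above as a useful check. The second should follow from the classification of curves in \cite{pqSym} together with the computation of Lagrangian intersection Floer homology between rational and special curves in \cite{pqMod}; some care is needed with any overall normalization implicit in the pairing theorem (for instance a factor of \(2\) relating the homological pairing to the geometric intersection number on the four-punctured sphere). Everything after the intersection numbers are in hand is elementary arithmetic.
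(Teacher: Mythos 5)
Your argument is correct and follows essentially the same route as the paper: pair \(\gamma_{T_K}\) (as given by \cref{thm:main:FUreformulation}) with the rational curve of slope \(2t+1\) via the gluing theorem, and sum the rational--rational and rational--special contributions, with the same bookkeeping \(d-1=2n\) at the end. The only cosmetic difference is normalization: the paper's \cref{lem:pairing_linear_curves:dimension_formula} computes \(2\dim\HFKhat\), so all of its pairing numbers are twice yours, a factor you correctly anticipate and handle consistently.
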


The average torsion order \(\overline{\ell}\) may be arbitrarily large, as demonstrated by the fact that for any positive integer \(n\), the value of \(\overline{\ell}\) for the torus knot \(K=T_{n,n+1}\) is equal to \(\tfrac{n}{2}\).
Usually, however, \(\overline{\ell}\) is very close to~1. 
In fact, among all the knots up to 15 crossings, the average value of \(\overline{\ell}\) is about 1.00077 \cite{hanselman_program,hf2hft}.
Its maximum value is 2 among those knots (realized by \(13n_{4639}=C_{2,5}(T_{2,3})\) and \(15n_{41185}=T_{4,5}\)). 
So one might interpret \cref{prop:growth:HFK:knots} as saying that \(\dim \HFKhat(C_{2,2t+1}(K))\) depends approximately linearly on \(d=\dim\HFKhat(K)\).
This is in stark contrast to the non-linear behaviour of reduced Khovanov homology:

\begin{proposition}\label{prop:growth:Khr}
	Given a knot \(K\subset S^3\), let \(d=\dim \Khr(K)\). 
	Then for any \(t\in\Z\), 
	\[
	\dim \Khr(C_{2,2t+1}(K))
	\geq
	2d^2 - 2 + |2t+1-2\vartheta_2(K)|,
	\]
	where \(\vartheta_2(K)\) is the concordance homomorphism introduced in \cite{LZ}. 
\end{proposition}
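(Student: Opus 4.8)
The plan is to run the satellite argument of \cref{prop:growth:HFK:knots} with the reduced Khovanov/Bar-Natan multicurve $\Khcurve{T_K}$ of the double tangle in place of the Heegaard Floer multicurve $\gamma_{T_K}$, and with the Khovanov analogue (due to \cite{LZ}) of the \ref{thm:main:intro} in place of the \ref{thm:main:intro} itself. First I would write $C_{2,2t+1}(K)=T_K\cup Q_t$, where $Q_t$ is the rational pattern tangle realising the $(2,2t+1)$-cabling; since a rational tangle has a single rational curve as its Khovanov invariant \cite{KWZ}, this gives $\Khcurve{Q_t}=\rr(2t+1)$ in suitable coordinates on the four-punctured sphere (the same coordinates as on the Heegaard Floer side of \cref{prop:growth:HFK:knots}). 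The gluing theorem for the Khovanov tangle invariant \cite{KWZ} then computes $\dim\Khr(C_{2,2t+1}(K))$ as the dimension of the Lagrangian Floer homology of the pair $\bigl(\Khcurve{T_K},\rr(2t+1)\bigr)$, and that dimension is at least the minimal geometric intersection number $\delta\bigl(\Khcurve{T_K},\rr(2t+1)\bigr)$ of the underlying curves (with equality when all local systems involved are trivial). So it remains to bound this intersection number from below.

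For that I would invoke the structure of $\Khcurve{T_K}$ established in \cite{LZ}, the Khovanov counterpart of \cref{thm:structure:HFTcurve} and the \ref{thm:main:intro}: the multicurve $\Khcurve{T_K}$ has one distinguished rational component, of slope $2\vartheta_2(K)$ essentially by definition of the concordance homomorphism $\vartheta_2$ (this is the analogue of the component $\rr_{4\tau(K)}$ of $\gamma_{T_K}$), together with special components of slope $\infty$ through the punctures $\{\I,\II\}$ and $\{\III,\IV\}$ whose total length $L$ is a \emph{quadratic} function of $d=\dim\Khr(K)$, namely $L=2(d^2-1)$ (or at least $L\geq 2d^2-2$, which is all one needs). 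A rational curve of slope $k$ meets $\rr(2t+1)$ in $|2t+1-k|$ points, and a special curve of slope $\infty$ and length $m$ meets any rational curve in exactly $m$ points --- the count behind the contribution $2\ell_i$ of $\s_{2\ell_i}$ in \cref{prop:growth:HFK:knots}. Adding up these contributions gives
\[
\delta\bigl(\Khcurve{T_K},\rr(2t+1)\bigr)\;\geq\;|2t+1-2\vartheta_2(K)|+L\;=\;|2t+1-2\vartheta_2(K)|+2d^2-2,
\]
and combining with the first paragraph yields the asserted bound.

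The hard part is the structural input of the second step: controlling the total length of the special part of $\Khcurve{T_K}$ in terms of $d$. On the Heegaard Floer side the analogous step is immediate, because the \ref{thm:main:intro} says doubling is length-preserving --- each curve segment of $\gamma_K$ becomes one component of $\gamma_{T_K}$ of twice its length, so the complexity stays linear in $d$ (it equals $2(d-1)\overline{\ell}$). No such clean statement is available for Khovanov homology: passing from $\BNr(K)$ to $\Khcurve{T_K}$ is a genuine doubling operation under which the two strands of the double tangle interact pairwise, so that a $d$-dimensional Khovanov homology produces a tangle multicurve of complexity of order $d^2$. One must therefore extract the precise constant from the doubling formula of \cite{LZ}, or, absent an exact identity, establish the lower bound $L\geq 2d^2-2$ directly; this is exactly the mechanism responsible for the non-linear behaviour of Khovanov homology under cabling, in contrast to the linear behaviour of knot Floer homology recorded in \cref{prop:growth:HFK:knots}. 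The remaining points --- fixing the coordinate conventions so that $\Khcurve{Q_t}=\rr(2t+1)$ and the linear term reads $|2t+1-2\vartheta_2(K)|$, and matching reduced versus unreduced invariants and basepoints in the gluing theorem --- are routine.
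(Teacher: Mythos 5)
Your argument has a genuine gap at exactly the point you flag as ``the hard part'': the quadratic lower bound \(L\geq 2d^2-2\) on the total length of the special components of \(\Khcurve{T_K}\) is assumed, not proved. There is no established ``doubling formula'' in \cite{LZ} that yields this; the closest statement in the literature is the assertion that the \emph{number} of special components of \(\Khr(T_K)\) is at least \(\tfrac{1}{2}(d^2-1)\), and that is precisely \cref{conj:Kh:double-tangle} of this paper --- an open conjecture, explicitly deferred to future work. So as written, your proof reduces the proposition to an unproved structural statement about \(\Khcurve{T_K}\) that is at least as hard as the proposition itself.

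The paper's proof circumvents this entirely by importing the quadratic growth from elsewhere: the Batson--Seed link splitting spectral sequence applied to the \emph{even} cable \(C_{2,2t}(K)\), a two-component link both of whose components are \(K\), gives \(\dim\Kh(C_{2,2t}(K))\geq(\dim\Kh(K))^2\), hence \(\dim\Khr(C_{2,2t}(K))\geq 2d^2\). The gluing theorem is then used only for the slope comparison: writing
\[
2\dim\Khr(C_{2,k}(K))=\dim\HF(\rKh_k,\rKh_{2\vartheta_2(K)})+\sum_{\s}\dim\HF(\rKh_k,\s),
\]
one observes that the special-component sum is \emph{independent of \(k\)}, so evaluating at \(k=2\vartheta_2(K)\) (where Batson--Seed applies) and at \(k=2t+1\) transfers the bound to the odd cable, producing \(\dim\Khr(C_{2,2t+1}(K))=\dim\Khr(C_{2,2\vartheta_2(K)}(K))-2+|2t+1-2\vartheta_2(K)|\geq 2d^2-2+|2t+1-2\vartheta_2(K)|\). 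In effect this \emph{derives} your bound on \(L\) as a byproduct rather than requiring it as input. If you want to salvage your approach, you should replace the appeal to a doubling formula by this Batson--Seed argument; the rest of your intersection-number bookkeeping (rational against rational contributing \(|2t+1-k|\), special of length \(m\) contributing \(m\)) is consistent with \cref{lem:pairing_linear_curves:dimension_formula} up to the overall factor of 2 coming from the gluing theorem computing \(2\dim\Khr\) rather than \(\dim\Khr\).
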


This result is basically an application of the link splitting spectral sequence in Khovanov homology of Batson and Seed \cite{BatsonSeed}. 
Computations suggest that this behaviour extends to the number of components in the reduced Khovanov multicurve invariant \(\Khr(T_K)\) of double tangles \cite{KWZ}. 
We will explore the following conjecture in upcoming work. 

\begin{conjecture}\label{conj:Kh:double-tangle}
	For any knot \(K\subset S^3\), the number of special components in \(\Khr(T_K)\) is bigger or equal to \(\tfrac{1}{2}(d^2-1)\), where \(d\) is the dimension of the reduced Khovanov homology of \(K\). 
	For two-bridge knots, this bound is sharp. 
\end{conjecture}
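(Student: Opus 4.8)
The plan is to transport the satellite argument behind \cref{prop:growth:HFK:knots} to the Khovanov side. Writing $C_{2,2t+1}(K)=T_K\cup T_{P_t}$, where $T_{P_t}$ is the rational four-ended tangle realising the $(2,2t+1)$-cabling pattern, the gluing theorem for the reduced Khovanov multicurve invariant from \cite{KWZ} identifies $\Khr(C_{2,2t+1}(K))$ with the Lagrangian intersection Floer homology of the multicurves $\Khr(T_K)$ and $\Khr(T_{P_t})$ on the four-punctured sphere. Since $\Khr(T_{P_t})$ is a single rational curve whose slope $k_t$ is an affine function of $t$, this Floer homology splits as a direct sum over the components of $\Khr(T_K)$, with no interaction between distinct components, so that $\dim\Khr(C_{2,2t+1}(K))$ is the minimal geometric intersection number of $\Khr(T_K)$ with that rational curve (up to a standard normalisation). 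The task is then to isolate, inside this count, the contribution of the special components.

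First I would establish the Khovanov analogue of \cref{thm:structure:HFTcurve} --- one rational component together with finitely many special components, the latter conjecturally coming in pairs, which is expected as $T_K$ is cap-trivial --- and identify the slope of the rational component with $2\vartheta_2(K)$, analogously to how the rational component $\rr_{4\tau(K)}$ of $\gamma_{T_K}$ has slope $4\tau(K)$; this should follow from the behaviour of $\vartheta_2$ under the relevant satellite operations established in \cite{LZ}. With this in hand, the rational component contributes exactly $|2t+1-2\vartheta_2(K)|$ to $\dim\Khr(C_{2,2t+1}(K))$, matching the linear term of the Batson--Seed bound in \cref{prop:growth:Khr}, while each special component contributes a nonnegative amount that is independent of $t$ once $|t|$ is large. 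Subtracting the rational contribution from the inequality of \cref{prop:growth:Khr} leaves the total special contribution bounded below by $2(d^2-1)$; if each special component contributes at most $4$, then $\#\{\text{special components in }\Khr(T_K)\}\geq\tfrac{1}{2}(d^2-1)$, as claimed.

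The main obstacle is precisely this uniform upper bound on the contribution of a single special component, which is the one ingredient not already essentially in the literature: a length-$2\ell$ special curve of slope $\infty$ meets a rational curve in roughly $4\ell$ points in minimal position, so a priori a few long special components could absorb most of the $d^2$ growth and the count would fall well below the conjectured value. One therefore needs an a priori bound on the lengths $\ell$ occurring in $\Khr(T_K)$ --- conjecturally $\ell=1$ throughout, at least for thin $K$ --- drawing on the full classification of Khovanov multicurves in \cite{KWZ}. The sharpness statement for two-bridge knots should then reduce to an explicit computation of $\Khr(T_K)$ for rational $K$: here $\Khr(K)$ is thin, one expects $\Khr(T_K)$ to consist of a single rational curve together with exactly $\tfrac{1}{2}(d^2-1)$ short special curves, and this can presumably be verified by induction on the continued-fraction expansion of $K$, tracking how the multicurve changes under adding a twist region, so that the Batson--Seed spectral sequence degenerates and \cref{prop:growth:Khr} becomes an equality.
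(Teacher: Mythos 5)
First, note that the statement you are trying to prove is stated in the paper as a \emph{conjecture}: the paper offers no proof, only computational verification for about 30 knots and a consistency check against \cref{prop:growth:Khr} under the additional divisibility hypothesis of \cref{conj:Kh:divisibility}. So there is no proof in the paper to match your argument against; the question is whether your argument closes the conjecture, and it does not.

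Your strategy is essentially the paper's closing consistency check run in reverse, and the direction of the inequalities is where it breaks. The gluing theorem together with \cref{thm:structure:Khcurve} and \cref{lem:pairing_linear_curves:dimension_formula} gives
\[
2\dim\Khr(C_{2,2t+1}(K))
=
2|2t+1-2\vartheta_2(K)|
+\sum_{\s}2\,\ell(\s),
\]
where the sum runs over the special components of $\Khr(T_K)$ and $\ell(\s)$ denotes their lengths. Combining this with the Batson--Seed bound of \cref{prop:growth:Khr} yields only $\sum_{\s}\ell(\s)\geq 2(d^2-1)$, i.e.\ a lower bound on the \emph{total length} of the special part, not on the \emph{number} of components. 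To pass from total length to a count you need a uniform upper bound $\ell(\s)\leq 4$ on every special component of $\Khr(T_K)$; you identify this yourself as "the one ingredient not already essentially in the literature", but that is precisely the entire content of the conjecture beyond what \cref{prop:growth:Khr} already gives. No such bound is known, it is not implied by the classification of Khovanov multicurves, and together with \cref{conj:Kh:divisibility} it would force every special component to have length exactly 4 --- an even stronger statement than the one you are trying to prove. (By contrast, the paper's remark goes the sound way: \emph{assuming} \cref{conj:Kh:double-tangle,conj:Kh:divisibility}, each of the $\geq\tfrac12(d^2-1)$ components contributes at least 8, recovering \cref{prop:growth:Khr}.) The sharpness claim for two-bridge knots is likewise only asserted, not argued. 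So the proposal is a plausible reduction of the conjecture to an unproven length bound, not a proof.
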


On a perhaps more speculative note, the decomposition of the multicurve \(\gamma_K\) into curve segments might also to be relevant in relation to the Khovanov multicurve invariant \(\Khr(T)\), specifically in view of the spectral sequence \cite{OSspectralsequence}. 

\begin{observation}
	For all strongly invertible knots \((K,h)\) with up to 9 crossings, the number of special components of \(\Khr(T_{K,h})\) is bounded below by the number of curve segments \(\capR_\ell\) in \(\gamma_K\). 
	In fact we have equality for all such knots, except \(9_{46}\), the only such knot with two strong inversions \(h_1\) and \(h_2\) for which the (ungraded) Khovanov multicurve invariants do not agree \cite{KWZ_strong_inversions}.
\end{observation}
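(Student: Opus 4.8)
The plan is to establish the Observation by a finite computer-assisted census, since it concerns only the strongly invertible knots with at most $9$ crossings and both quantities being compared can be extracted from explicit invariants. The starting point is the classification of strong inversions and the computation of the reduced Khovanov multicurves $\Khr(T_{K,h})$ of the associated quotient tangles carried out in \cite{KWZ_strong_inversions}; that census is also what entitles us to single out $9_{46}$, it being the only knot in the range carrying two inequivalent strong inversions whose quotient tangles have non-homotopic ungraded Khovanov multicurves. For each such $K$ I would then extract the number of curve segments of type $\capR_\ell$ in $\gamma_K$: by \cref{thm:structure:HFKcurve} this is exactly the integer $n$ in the splitting $\HFKminus(K)\cong\F[U]\oplus\bigoplus_{i=1}^n\F[U]/U^{\ell_i}$ of \cref{thm:structure:HFKminus}, so it is immediately available from knot Floer homology (e.g.\ via Szabó's program \cite{hfkcalc} or Hanselman's tables \cite{hanselman_program}). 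Comparing this $n$ knot-by-knot with the number of special (as opposed to rational) components of $\Khr(T_{K,h})$ then yields the asserted inequality and identifies the cases of equality.

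The only point requiring more than tabulation is $9_{46}$. Since $n$ depends only on $\HFKminus(K)$, it is common to the two strong inversions $h_1,h_2$; the quotient tangles, however, have non-homotopic multicurves $\Khr(T_{9_{46},h_1})$ and $\Khr(T_{9_{46},h_2})$, and one reads off from the computed curves that at least one of them has strictly more than $n$ special components, so the bound is not attained. Heuristically, this reflects a special component of the Khovanov multicurve with no counterpart $\capR_\ell$-segment on the Heegaard Floer side --- the sort of mismatch that the \ref{thm:main:intro} rules out, on the Heegaard Floer side, for the double tangle $T_K$ itself.

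The main obstacle is infrastructural rather than mathematical. Computing $\Khr(T)$ means building the Bar-Natan complex of a tangle diagram and then running the simplification-and-matching algorithm against the classification of curves on the four-punctured sphere \cite{KWZ}, and this reconstruction is the delicate step: one must check that the quotient-tangle diagrams --- together with the framings induced by the strong inversions --- are correct, that the simplification terminates at a genuinely reduced representative, and that special components are counted with the correct local-system multiplicities. Since the $\Khr(T_{K,h})$-side of this census already appears in \cite{KWZ_strong_inversions}, the honest content of the Observation is a cross-reference of those tables against the $\capR_\ell$-counts read off from $\HFKminus$; the real work lies in the bookkeeping and in trusting two independent pieces of software, not in a new argument.
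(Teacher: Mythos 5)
The paper offers no proof of this Observation: it is stated as a computational cross-check of the Khovanov multicurve data for quotient tangles from \cite{KWZ_strong_inversions} against the \(\capR_\ell\)-counts read off from knot Floer homology, which is precisely the finite census you describe. Your identification of the number of \(\capR_\ell\)-segments with the number \(n\) of \(U\)-torsion summands of \(\HFKminus(K)\) (via conjugation symmetry, as in the paragraph preceding \cref{thm:main:FUreformulation}) and your handling of \(9_{46}\) match the paper's implicit argument, so the proposal is correct and takes essentially the same approach.
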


It would be interesting to compute the Heegaard Floer multicurve invariant \(\gamma_T\) of quotient tangles of strongly invertible knots to see if the identity of the tangle invariants in \cref{exa:main:intro} is part of a general pattern. 

\subsection*{Outline of the paper}
In \cref{sec:conventions}, we recall the construction of the multicurve invariants \(\gamma_K\) and \(\gamma_{T_K}\) and we discuss gradings in both settings. 
In \cref{sec:BS:computations}, we compute the bordered sutured type~AD bimodule that turns the type~D structure corresponding to the knot complement into the type~D structure for the corresponding double tangle.
In \cref{sec:proof:main_thm}, we interpret the action of this bimodule in terms of multicurves, proving the \ref{thm:main:intro}. 
In \cref{sec:thinness,sec:growth}, we prove the results from \cref{subsec:intro:thinness,subsec:intro:growth}, respectively. 
Throughout this paper, we will assume familiarity with bordered and bordered sutured Heegaard Floer homology \cite{LOT,ZarevThesis} as well as basic techniques for modifying chain complexes over categories within their homotopy class; see for instance \cite[Section~1]{pqMod}.

\subsection*{Acknowledgements}
This paper was motivated by joint work with Lukas Lewark \cite{LZ}.
I am very grateful to him for his generous support over the past two years and for giving me the freedom to pursue my own research as his postdoc. 
I would also like to thank Jake Rasmussen, Liam Watson, and Paul Wedrich for helpful conversations about the behaviour of knot Floer and Khovanov homology under cabling. 
Finally, I thank Jen Hom and Biji Wong for their helpful questions and comments after a talk I gave about this work at the low-dimensional topology workshop in Budapest in March 2023, which motivated me to explore some further applications of the Main Theorem and which allowed me to improve the exposition.
During the course of this work, I was funded through the Emmy Noether Programme of the DFG, project number 412851057, and an individual research grant of the DFG, project number 505125645.

\section{Review and conventions}\label{sec:conventions}

\subsection{%
	\texorpdfstring{%
		A more detailed review of the multicurve invariant \(\bm{\gamma_K}\)}{%
		A more detailed review of the multicurve invariant γ\_Κ
}}

The invariant \(\gamma_K\) is a topological interpretation of a certain bordered Heegaard Floer invariant.
We start with a description of the latter and then explain how it is related to \(\gamma_K\). 

\subsubsection*{The bordered structure on the knot exterior}
The exterior \(X_K\coloneqq S^3\smallsetminus\interior(\nu(K))\) of an oriented knot \(K\subset S^3\) is a three-manifold with torus boundary. 
On this torus \(\partial X_T\), we fix two simple closed curves, namely a meridian \(\mu_K\) and a longitude \(\lambda_K\) of \(K\), that intersect in a single point \(z\), which we may regard as a basepoint on \(\partial X_K\). 
The longitude is a push-off of \(K\), inheriting its orientation. 
The orientation of the meridian is determined by the right-hand rule. 
This decoration of \(\partial X_K\) can be turned into a bordered structure as follows:
Around \(z\), we add a small circle that intersects \(\mu_K\) and \(\lambda_K\) minimally and which one may think of as a suture. 
The restriction of \(\mu_K\) to the complement of the interior of the disc bounded by the suture is an oriented arc, which we denote by \(\mathbf{a}_{\bullet}\). 
Similarly, \(\lambda_K\) gives rise to an oriented arc \(\mathbf{a}_{\circ}\) with ends on the suture. 
We now add a basepoint to the suture; by convention, it sits immediately between the endpoints of \(\mathbf{a}_{\bullet}\) and \(\mathbf{a}_{\circ}\). 
This is illustrated at the bottom left of \cref{fig:executive-summary}. 
The arc diagram \(\Za\) consisting of the suture together with this basepoint and the arcs \(\mathbf{a}_{\bullet}\) and \(\mathbf{a}_{\circ}\) embedded on \(\partial X_K\) is a bordered structure for \(X_K\). 
Now the algebraic invariant corresponding to the multicurve \(\gamma_K\) is the type~D structure \(\CFD(X_K)^{\algT}\) associated with \(X_K\) equipped with this bordered structure.
For the construction of \(\CFD(X_K)^{\algT}\), we refer the reader to \cite{LOT}; we only recall its formal properties, following \cite[Section~11]{LOT}.

\subsubsection*{The torus algebra}
We define \(\CFD(X_K)^{\algT}\) as a graded right type~D structure over the bordered algebra \(\mathcal{A}(\Za,1)\) with one moving strand, which we identify with the quiver algebra 
\[
\mathcal{A}
\coloneqq
\F\left.\left[
	\begin{tikzcd}[column sep=2cm]
	\bullet
	\arrow[bend left ]{r}[description]{\sigma_1}
	\arrow[leftarrow ]{r}[description]{\sigma_2}
	\arrow[bend right]{r}[description]{\sigma_3}
	&
	\circ
	\end{tikzcd}
\right]
\right/
\Big(
\sigma_1\sigma_2
=
0
=
\sigma_2\sigma_3
\Big).
\]
We denote the idempotents in \(\mathcal{A}\) by \(\iota_\bullet\) and \(\iota_\circ\), respectively. 
We also write \(\sigma_{12}\coloneqq \sigma_2\sigma_1\), \(\sigma_{23}\coloneqq \sigma_3\sigma_2\), and \(\sigma_{123}\coloneqq \sigma_3\sigma_2\sigma_1\).
The algebra \(\mathcal{A}\) is graded by some non-commutative groups, which we will discuss later. 
For the moment, we will also ignore the grading on \(\CFD(X_K)\), which takes values in the set of certain cosets of these groups.

\begin{remark}\label{rem:torus_algebra}
	We have renamed the elements of the algebra \(\algT\), usually denoted by the letter \(\rho\) instead of \(\sigma\), to avoid confusion with the elements \(p_i\) of the algebra \(\algS\) that we will introduce below. 
	More importantly, the invariant of \(X_K\) was originally defined as a \emph{left} type~D structure \(^{\algT^{op}}\CFD(X_K)\) over the opposite algebra \(\algT^{op}\) of \(\algT\) \cite{LOT,HRW}. 
	Our conventions have the advantage that the multiplication in the torus algebra \(\algT\) is read from right to left, like morphisms in a category. 
	In fact, we usually consider \(\algT\) as a category with two objects and we regard type~D structures simply as chain complexes over this category; see \cite[Section~1]{pqMod}.
\end{remark}

\begin{example}\label{exa:first_complex}
	For the unknot \(U\) and the right-handed trefoil knot \(T_{2,3}\), we have
	\[
	\CFD(X_U)^{\algT}
	\cong
	\left[
	\begin{tikzcd}[inner sep=2pt]
	\bullet
	\arrow[in=-30,out=30,looseness=5]{rl}{\sigma_{12}}
	\end{tikzcd}
	\right]
	\quad
	\text{and}
	\quad
	\CFD(X_{T_{2,3}})^{\algT}
	\cong
	\left[
	\begin{tikzcd}[inner sep=2pt]
	\bullet
	\arrow{d}[swap]{\sigma_{1}}
	&
	\circ
	\arrow{l}[swap]{\sigma_{2}}
	&
	\bullet
	\arrow{l}[swap]{\sigma_{3}}
	\arrow{d}{\sigma_{1}}
	\\
	\circ
	&
	&
	\circ
	\\
	&
	\circ
	\arrow{ul}{\sigma_{23}}
	&
	\bullet
	\arrow{l}{\sigma_{3}}
	\arrow{u}[swap]{\sigma_{123}}
	\end{tikzcd}
	\right].
	\]
	These labelled graphs should be understood as follows: 
	Each vertex represents a generator/basis element of the underlying chain module of the chain complex. 
	Each arrow corresponds to a non-zero entry of the square matrix representing the differential when written with respect to the chosen basis. 
	The entries of this square matrix are the elements of \(\algT\) specified by the arrow labels. 
\end{example}
	
The complexes in the above example have the remarkable property that each of them arises from an immersed curve on a one-punctured torus via the following general construction.

\subsubsection*{From curves to chain complexes}
Let \(\Sigma\) be a one-punctured torus \(S^1\times S^1\smallsetminus\interior(D^2)\), decorated by two arcs \(\mathbf{a}_{\bullet}\) and \(\mathbf{a}_{\circ}\), just like \(\partial X_K\) above. 
In fact, if \(K\) is given, we identify \(\Sigma\) with \(\partial X_K\) minus a disc bounded by the suture. 
Let \(\gamma\) be a collection of non-contractible and primitive immersed curves \(S^1\looparrowright\Sigma\) that intersects \(\mathbf{a}_{\bullet}\) and \(\mathbf{a}_{\circ}\) minimally and transversely. 
For the second complex in the above example, \(\gamma\) is the blue curve \(\gamma_K\) on the bottom left of \cref{fig:executive-summary}; for the first complex, the curve agrees with the longitude of \(U\). 
Each point in \(\gamma\cap\mathbf{a}_{\bullet}\) corresponds to a generator in idempotent \(\iota_\bullet\), and similarly, each point in \(\gamma\cap\mathbf{a}_{\circ}\) corresponds to a generator in idempotent \(\iota_\circ\). 
The arcs cut \(\gamma\) into curve segments, each of which corresponds to an arrow (= a component of the differential) in the complex, connecting the two generators for the ends of the curve segment. 
To determine the direction and labels of these arrows, we first need to fix a labelling of some paths on \(\partial\Sigma\). 
Up to reparametrization, there are four elementary paths on \(\partial\Sigma\), i.e.\ paths that start and end on \(\partial(\mathbf{a}_\bullet\cup\mathbf{a}_\circ)\), that follow the induced orientation on \(\partial\Sigma\), and whose interiors avoid \(\partial(\mathbf{a}_\bullet\cup\mathbf{a}_\circ)\).  
By convention, we label three of these consecutive paths by \(\sigma_1\), \(\sigma_2\), and \(\sigma_3\) as shown in \cref{fig:executive-summary}. 
Now, every curve segment of \(\gamma\) is homotopic (relative to \(\mathbf{a}_\bullet\cup\mathbf{a}_\circ\)) to either \(\sigma_1\), \(\sigma_2\), or \(\sigma_3\), or one of their compositions, and each curve segment inherits its orientation and label from this path on \(\partial\Sigma\). 

Chain complexes over \(\mathcal{A}\) that (up to chain homotopy) arise from this construction and that can be appropriately graded (see below) are called \emph{loop-type} \cite{HW1}. 
However, as we will see in a moment, not every chain complex is loop-type. 

\subsubsection*{Curves with local systems}\label{local_systems}
Let \(\gamma\) be a curve in the torus as above, but consisting of a single component only. 
Also, let \(n\) be some positive integer and let \((C,d)\) be a chain complex that arises from \(\gamma\) using the previous construction. 
Then \((C\otimes \mathbb{F}_2^n,d\otimes\id_{\mathbb{F}_2^n})\) is also a well-defined chain complex. 
We now modify its differential as follows:
First, we fix an orientation of \(\gamma\). 
Next, let \(x\) and \(y\) be two basis elements for \((C,d)\) such that the differential \(d\) has a component \(a\) from \(x\) to \(y\). 
In the new chain complex, this component corresponds to a component \(a\otimes\id_{\mathbb{F}_2^n}\). 
We now change this component to \(a\otimes X\) or \(a\otimes X^{-1}\), depending on certain orientation conventions, where \(X\in\mathrm{GL}_n(\F)\).
One can show that the chain homotopy type of the resulting chain complex is independent of the choice of basis elements \(x\) and \(y\) (see the remarks at the end of the proof of \cite[Theorem~1.5]{HRW}).
We can therefore denote this chain complex simply by \(C(\gamma,X)\). 
We call \(X\) the local system of the oriented curve~\(\gamma\). 
For \(n>0\), the local systems \(X=\id_{\mathbb{F}_2^n}\) are called trivial.

\begin{definition}\label{def:equivalence:local_systems}
Given a collection \(\{(\gamma_i,X_i)\}_i\) of curves with local systems, we define the associated chain complex as the direct sum of the chain complexes \(C(\gamma_i,X_i)\). 
Two collections \(\{(\gamma_i,X_i)\}_i\) and \(\{(\gamma'_j,X'_j)\}_j\) of curves with local systems are considered equal (and can be shown to induce the same chain complexes up to homotopy) if for any curve~\(\gamma\), the matrices
\[
\bigoplus_{\gamma=\gamma_i} X_i \oplus \bigoplus_{-\gamma=\gamma_i} X^{-1}_i
\quad
\text{and}
\quad
\bigoplus_{\gamma=\gamma'_j} X'_j \oplus \bigoplus_{-\gamma=\gamma'_j} X'^{-1}_j
\]
are similar. 
Here, \(-\gamma\) denotes the curve \(\gamma\) with the opposite orientation and two oriented immersed curves are considered equal if they are homotopic. 
\end{definition}

\subsubsection*{Classification result}
Chain complexes arising from curves with local systems are \emph{extendable}, which is a certain algebraic property that is immaterial to our discussion. 
We only need to know the following result of Hanselman-Rasmussen-Watson \cite[Theorem~1.4]{HRW}, which is essentially due to Lipshitz-Ozsváth-Thurston \cite[Chapter~11]{LOT}.  

\begin{theorem}
	For every knot \(K\subset S^3\), \(\CFD(X_K)^{\algT}\) is extendable. 
\end{theorem}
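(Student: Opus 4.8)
The plan is to reduce the statement to the explicit computation of \(\CFD(X_K)^{\algT}\) due to Lipshitz--Ozsváth--Thurston. Concretely, I would first invoke \cite[Theorem~11.26]{LOT} (the same input used for the first part of \cref{thm:structure:HFKcurve}), which shows that, after a homotopy over the torus algebra, \(\CFD(X_K)^{\algT}\) splits as a direct sum of standard type~D structures: a single ``unstable chain'', whose shape is one of finitely many models governed by the sign of \(\tau(K)\), together with a collection of ``stable chains'' (zigzags/boxes) read off from the \(U\)-torsion summands \(\F[U]/U^{\ell_i}\) of \(\HFKminus(K)\) as in \cref{thm:structure:HFKminus}. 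The two complexes in \cref{exa:first_complex} are the simplest instances of such a decomposition.

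Since extendability is invariant under homotopy equivalence of type~D structures — or, at the very least, it suffices to verify it for a single model in the homotopy class — and is closed under taking direct sums, the problem then reduces to checking extendability of each of these standard building blocks. For the stable chains this is routine: each arises from an honest curve segment on the punctured torus (a \(\capR_\ell\), \(\capL_\ell\), or \(\ccon_\ell\) segment), and the extension can be written down directly from that curve-segment description. For the unstable chain one proceeds by hand through the cases \(\tau(K)>0\), \(\tau(K)=0\), and \(\tau(K)<0\) — finitely many shapes — exhibiting in each case the required extension to a type~D structure over the enlarged algebra and checking that it satisfies the structure equation.

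I expect the main obstacle to be precisely this last verification for the unstable chain: one has to produce the extended operations explicitly and confirm compatibility, with genuine (if short) case analysis in \(\tau(K)\). A secondary, purely bookkeeping nuisance is aligning conventions — the relabelling \(\rho\leftrightarrow\sigma\) and the passage between left type~D structures over \(\algT^{op}\) and right type~D structures over \(\algT\) flagged in \cref{rem:torus_algebra} — so that the \cite{LOT} models appear in the exact form used here. Alternatively, one could bypass the explicit models and argue structurally: \(\CFD(X_K)^{\algT}\) can be computed from a bordered Heegaard diagram for \(X_K\) whose boundary arc diagram is viewed as the restriction of a richer one, and the resulting invariant over the larger algebra restricts to \(\CFD(X_K)^{\algT}\), thereby witnessing the extension; this is closer to the argument in \cite{HRW}, but the holomorphic-curve input makes it less self-contained.
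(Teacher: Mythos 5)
The paper does not actually prove this statement: it is imported verbatim from \cite[Theorem~1.4]{HRW}, with the attribution ``essentially due to'' \cite[Chapter~11]{LOT}, so there is no in-paper argument to compare yours against. Of your two routes, the second (diagrammatic) one --- computing \(\CFD(X_K)^{\algT}\) from a bordered Heegaard diagram and witnessing the extension by working over the richer arc diagram/extended torus algebra --- is essentially the argument in the literature, and is fine modulo the holomorphic-curve input you acknowledge.

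Your primary route, however, contains a genuine gap in its key reduction step: \(\CFD(X_K)^{\algT}\) does \emph{not} split as a direct sum of the unstable chain and the stable chains. In the model of \cite[Theorem~11.26]{LOT} the chains share their endpoints, namely the generators in the meridional idempotent \(\iota_\bullet\), and they concatenate into closed loops --- this is precisely why \(\gamma_K\) is a closed multicurve, and why \cref{def:local_systems} must artificially \emph{split} the complex along the \(\iota_\bullet\) generators to produce the curve segments \(\capR_\ell\), \(\capL_\ell\), \(\ccon_\ell\); those split pieces are not direct summands. For instance, the trefoil complex in \cref{exa:first_complex} is connected: a single loop built from three segments. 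Consequently ``extendability is closed under direct sums'' does not reduce the problem to individual chains, and exhibiting an extension for each chain in isolation proves nothing about \(\CFD(X_K)^{\algT}\). The real content is to define the extended operations (the arrows involving the extra generator \(\sigma_0\) of the extended torus algebra) consistently at each \(\iota_\bullet\) generator where two chain-ends meet, and to verify the extended structure relation locally there --- i.e.\ the check has to be performed on each loop-type component as a whole, not chain by chain. With that correction (your hedge about homotopy-invariance of extendability is harmless, since the notion in \cite{HRW} is a property of the homotopy class), the route through the explicit LOT model does go through; the case analysis in the sign of \(\tau(K)\) for the unstable chain then becomes a minor part of the verification rather than the main obstacle.
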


This is relevant in the context of the following classification result of Hanselman-Rasmussen-Watson
\cite[Theorem~1.5]{HRW}.

\begin{theorem}\label{thm:classification:HF}
	Every extendable graded chain complex over \(\mathcal{A}\) is chain homotopic to a chain complex corresponding to a collection of curves with (possibly trivial) local systems. 
	This collection of curves with local systems is unique up to the equivalence from \cref{def:equivalence:local_systems}. 
\end{theorem}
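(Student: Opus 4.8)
The plan is to establish the two assertions — existence of a curve model for the given homotopy type, and its uniqueness up to the equivalence of \cref{def:equivalence:local_systems} — by passing to a normal form for chain complexes over \(\mathcal{A}\). For \textbf{existence}, I would first replace the given extendable graded complex \((C,\partial)\) by a \emph{reduced} representative of its homotopy class, i.e.\ one with no idempotent component in \(\partial\); this is the usual cancellation lemma applied over the category \(\mathcal{A}\), and it preserves both the grading and extendability. In a reduced complex, with a basis adapted to \(\iota_\bullet\) and \(\iota_\circ\), every entry of the differential is a scalar multiple of one of the non-idempotent nonzero algebra elements \(\sigma_1,\sigma_2,\sigma_3,\sigma_{12},\sigma_{23},\sigma_{123}\), and over \(\F\) that scalar is \(1\). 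The grading is essential here: since \(\partial\) is grading-preserving and these six elements lie in pairwise distinct gradings, the grading difference between two basis elements determines which of them can occur between them. Hence \((C,\partial)\) is encoded by a finite \emph{coefficient quiver} — vertices the basis elements, edges labelled by the six elements above — together with, for each family of parallel equally-labelled edges, a block of an \(\F\)-matrix recording which basis elements are connected; assembling these matrices is exactly the bookkeeping of local systems, which after a base change become monodromy data.

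Next I would analyse the shape of this quiver. Expanding every \(\sigma_{12},\sigma_{23},\sigma_{123}\) arrow into a composite of \(\sigma_1,\sigma_2,\sigma_3\) arrows through auxiliary vertices — the ``expansion'' step of Hanselman--Watson loop calculus \cite{HW1} — the identity \(\partial^2=0\) together with the relations \(\sigma_1\sigma_2=0=\sigma_2\sigma_3\) forces every vertex to be incident to at most two arrows, so the expanded quiver is a disjoint union of arcs and cycles. The role of extendability is precisely to exclude the arcs: a reduced extendable complex has no ``loose ends'', so the expanded quiver is a disjoint union of cycles. Each cycle is a loop in the sense of loop calculus, and reading its cyclic word in the \(\sigma_i\) as the sequence of intersections with \(\mathbf{a}_\bullet\) and \(\mathbf{a}_\circ\) identifies it with a free homotopy class of non-contractible, primitive immersed curve on the one-punctured torus; reattaching the recorded matrices turns the collection of cycles into a collection of curves with local systems whose associated complex is, by construction, chain homotopic to \((C,\partial)\).

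For \textbf{uniqueness}, I would use that over a field a reduced representative of a chain homotopy class is unique up to isomorphism, so two collections of curves with local systems whose associated complexes are homotopy equivalent have isomorphic reduced complexes, hence isomorphic coefficient quivers with matching local-system matrices up to simultaneous conjugation. The homotopy class of each curve is recovered from the cyclic word of its cycle, and its local system from the conjugacy class of the monodromy around that cycle; the only indeterminacy is that a cycle may be traversed in either direction, which reverses the orientation of the curve and inverts its monodromy — and this is exactly the equivalence relation of \cref{def:equivalence:local_systems}.

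The step I expect to be the main obstacle is showing that the expanded coefficient quiver of a reduced \emph{extendable} complex contains no arcs, i.e.\ that extendability forces every chain of arrows to close up into a cycle. This is the genuine technical heart, where one must use more than the bare algebra of \(\mathcal{A}\); it is essentially the content of \cite[Chapter~11]{LOT} recast in the loop-calculus language, and everything else in the argument is formal once it is in place.
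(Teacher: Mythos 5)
First, a caveat: the paper does not prove this statement at all --- it is quoted verbatim from Hanselman--Rasmussen--Watson \cite[Theorem~1.5]{HRW} --- so there is no in-paper proof to compare against; I can only measure your sketch against the actual argument in \cite{HRW}. Your overall architecture (reduce, encode the differential as a labelled quiver with matrices, decompose into cycles, use extendability to rule out open ends, deduce uniqueness from uniqueness of minimal models) is indeed the HRW strategy. But there is a genuine gap at the central step: the claim that \(\partial^2=0\) together with \(\sigma_1\sigma_2=0=\sigma_2\sigma_3\) forces every vertex of the (expanded) coefficient quiver to have valence at most two is false. The relations only yield matrix equations such as \(A_{\sigma_2}A_{\sigma_1}=0\) and \(A_{\sigma_3}A_{\sigma_{12}}+A_{\sigma_{23}}A_{\sigma_1}=0\) between the blocks of the differential, and these do not bound the number of arrows at a generator. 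A generic reduced representative of the homotopy class is a train track with ``crossover arrows,'' not a disjoint union of arcs and cycles. The theorem asserts that \emph{some} representative decomposes into loops, not that every reduced one does, and producing it requires further changes of basis beyond reduction: this is the arrow-sliding algorithm of \cite{HRW} (equivalently, the simultaneous normal-form problem for the family of block matrices, which is also where the local systems are forced to appear rather than being mere bookkeeping). You have therefore misidentified the technical heart --- it is not the exclusion of arcs (extendability does handle that, as you say), but the prior decomposition into cycles at all.

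A second, smaller gap sits in the uniqueness argument. It is true that over \(\F\) a reduced representative is unique up to isomorphism, but an isomorphism of reduced complexes need not respect the decomposition into cycles: it can mix distinct components carrying the same underlying curve. This is precisely why the equivalence in \cref{def:equivalence:local_systems} compares, for each free homotopy class, the \emph{direct sum} of all local systems supported on it (with inverses for reversed orientations) up to similarity, rather than matching components one-to-one with conjugate monodromies. Your sketch only accounts for the orientation-reversal ambiguity, so as written it would prove a stronger (and false) uniqueness statement.
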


Combining these two results, we can now finally define:

\begin{definition}
	For any knot \(K\subset S^3\), let \(\gamma_K\) denote the collection of immersed curves with local systems on \(\partial X_K\) corresponding to \(\CFD(X_K)^{\algT}\).
\end{definition}

\begin{definition}\label{def:local_systems}
	Given a collection \(\gamma=\{(\gamma_i,X_i)\}_i\) of curves with local systems, we define the curve segments \(\Sset(\gamma)\) as the set of (graded) type~D structures obtained as follows:
	Let \(D_i\) be the type~D structure corresponding to the curve \(\gamma_i\). 
	Split \(D_i\) along each generator in idempotent \(\bullet\) like so:
	\[
	(\cdots\longleftrightarrow \bullet \longleftrightarrow  \cdots)
	\quad\text{becomes}\quad
	(\cdots\longleftrightarrow \bullet)\quad(\bullet \longleftrightarrow \cdots ).
	\] 
	Let \(\{D_{i,j}\}_{j}\) be the connected components of the resulting type~D structure. 
	We now define \(\Sset(\gamma)\) as the set of type~D structures that for each pair \((i,j)\) contains \(\dim X_i\) copies of \(D_{i,j}\). 
\end{definition}

	Clearly, the elements of \(\Sset(\gamma)\) correspond to the curve segments \(\capR_\ell\), \(\capL_\ell\), and \(\ccon_\ell\) from the introduction.

\subsection{%
	\texorpdfstring{%
		A more detailed review of the multicurve invariant \(\bm{\gamma_T}\)}{%
		A more detailed review of the multicurve invariant γ\_T
}}

Like the multicurve \(\gamma_K\), the tangle invariant \(\gamma_{T_K}\) is a topological interpretation of a certain bordered (in fact: bordered sutured) Heegaard Floer invariant. 
But before describing any of this, we fix some conventions regarding Conway tangles, largely following
\cite[Section~2.1]{pqMod} and \cite[Section~2.1]{LZ}.

\subsubsection*{Conway tangles}
An \textit{oriented Conway tangle} \(T\) is a proper embedding of two intervals into a three-dimensional ball \(B^3\), considered up to ambient isotopy. 
A \textit{framing} of \(B^3\) is a circle \(\Phi\) on \(S^2=\partial B^3\) together with a choice of four points on this circle that are consecutively labelled \(\I\), \(\II\), \(\III\), and \(\IV\). 
We consecutively label the four arcs \(\Phi\smallsetminus\{\I,\II,\III,\IV\}\) by \(\aaa\), \(\aab\), \(\aac\), and \(\aad\), starting with the arc between \(\I\) and \(\II\).
A \textit{framed} oriented Conway tangle is a Conway tangle together with a choice of framing of \(B^3\) which identifies the endpoints of the two intervals with the four points \(\I\), \(\II\), \(\III\), and \(\IV\). 
We consider framed tangles up to ambient isotopy which identifies the framings. 
In the following, we will refer to framed, oriented Conway tangles simply as \textit{Conway tangles}. 

The \textit{double tangle} \(T_K\) of an oriented knot \(K\) is the Conway tangle that is defined as follows: 
Let \(L\) be the oriented link obtained as the union of \(K\) with one of its zero-framed longitudes, regarded as a small push-off of \(K\). 
Then \(L\) bounds an embedded (unoriented) annulus in \(S^3\), namely the image of a suitably chosen homotopy between \(K\) and the longitude.
Now choose a small open three-ball \(b\) which intersects this annulus in a trivially embedded band. 
Then \(T_K\) is defined as the intersection of \(L\) with the complement \(B^3\) of \(b\). 
We choose the framing of \(T_K\) that is (up to homotopy and up to interchanging the two components of \(T_K\)) uniquely characterized by the following properties:
\begin{enumerate}
	\item the intersection of \(\partial B^3\) with the band is equal to the arcs \(\aaa\) and \(\aac\), 
	\item in the closure of the three-ball \(b\), the restriction of \(L\) and the arcs \(\aab\) and \(\aad\) cobound two discs (ensuring that the linking number of the two strands in \(T_K\) vanishes),
	and 
	\item\label{enu:orientation} the orientation of the tangle points out of \(B^3\) at the tangle ends \(\I\) and \(\II\).
\end{enumerate}
In particular, if follows that 
\begin{enumerate}[resume]
\item\label{enu:connectivity} the tangle connects the tangle end \(\I\) with \(\IV\) and \(\II\) with \(\III\). 
\end{enumerate}
A Conway tangle satisfying \eqref{enu:connectivity} is said to have \textit{horizontal connectivity}.
We implicitly assume from now that all Conway tangles that we consider satisfy \eqref{enu:orientation} and~\eqref{enu:connectivity}.

\subsubsection*{The bordered sutured structure on the tangle exterior}
The framing \(\Phi\) of a Conway tangle \(T\) enables us to distinguish between the two components of \(\partial B^3\smallsetminus \Phi\): The \textit{front} is the component where the labelling of the points \(\I,\II,\III,\IV\) increases in counter-clockwise direction when viewed from outside \(B^3\); 
we call the other component the \textit{back} of \(T\). 
On both front and back, we now choose a small disc whose boundary we may consider as a suture.
We connect the two discs by four arcs \(\ba\), \(\bb\), \(\bc\), and \(\bd\) that each intersect the circle \(\Phi\) only once, namely within the arcs \(\aaa\), \(\aab\), \(\aac\), and \(\aad\), respectively. 
As in the case of the suture on the knot complement \(X_K\) in the previous subsection, we label the elementary paths on the front (resp.\ back) suture by \(p_i\) (resp.\ \(q_i\)) if they are adjacent to the tangle end labelled \(\mathtt{i}\). 
We now place basepoints on the elementary paths labelled \(p_4\) and \(q_3\). 
Finally, we remove a sufficiently small open tubular neighbourhood of the tangle \(T\) that avoids the arcs \(\ba\), \(\bb\), \(\bc\), and \(\bd\) and the sutures. 
We denote the result by \(X_T\coloneqq B^3\smallsetminus\interior(\nu(T))\), the exterior of the tangle \(T\). 
Since \(T\) is a tangle of horizontal connectivity, the basepoints lie in different components of the complement of the sutures and arcs on \(\partial X_T\). 
Hence the arc diagram \(\Zb\) consisting of the sutures with basepoints and the \(\beta\)-arcs \(\ba\), \(\bb\), \(\bc\), and \(\bd\) embedded on \(\partial X_T\) gives rise to a well-defined bordered sutured structure on \(X_T\). 
This is illustrated in \cref{fig:doubling:T_K}. 
Now the algebraic invariant corresponding to the multicurve \(\gamma_{T}\) is the associated type~D structure \(\BSD(X_T)^{\algS}\). 
For the construction of \(\BSD(X_T)^{\algS}\), we refer the reader to \cite{ZarevThesis}; we only recall its formal properties, following \cite{pqMod,pqSym} and specializing to the case that the tangle~\(T\) satisfies \eqref{enu:orientation} and~\eqref{enu:connectivity}.

\subsubsection*{The peculiar algebra}
\(\BSD(X_{T})^{\algS}\) is a graded chain complex over the bordered sutured algebra \(\mathcal{A}(\Zb,1)\) with one moving strand, which can be identified with the quiver algebra 
\[
\algS
\coloneqq
\mathcal{A}^\partial_{43}
=
\F\left.\left[
\begin{tikzcd}
&
\arcD
\arrow[bend right=10,swap]{dl}{q_1}
\\
\arcA
\arrow[bend right=10,swap]{ur}{p_1}
\arrow[bend right=10,swap]{dr}{q_2}
&
&
\arcC
\arrow[swap]{ul}{q_4}
\arrow{dl}{p_3}
\\
&
\arcB
\arrow[bend right=10,swap]{ul}{p_2}
\end{tikzcd}
\right]
\right/
\Big(
p_iq_i
=
0
=
q_ip_i
\Big)_{i=1,2}.
\]
We denote the idempotents in \(\algS\) by \(\iota_a\), \(\iota_b\), \(\iota_c\), and \(\iota_d\), respectively. 
We also write \(p_{12}\coloneqq p_1p_2\), \(p_{23}\coloneqq p_2p_3\), \(p_{123}\coloneqq p_1p_2p_3\), \(q_{21}\coloneqq q_2q_1\), \(q_{32}\coloneqq q_3q_2\), and \(q_{321}\coloneqq q_3q_2q_1\).
This algebra carries two gradings, a \(\hZ\)-grading \(\delta\) determined by
\[
\delta(p_1)
=
\delta(p_2)
=
\delta(p_3)
=
\half
=
\delta(q_1)
=
\delta(q_2)
=
\delta(q_4)
\] 
and a \(\Z^2\)-grading, called the Alexander grading, determined by 
\[
-A(p_1)
=
-A(q_1)
=
(1,0)
=
A(q_4)
\quad
-A(p_2)
=
-A(q_2)
=
(0,1)
=
A(p_3).
\] 
The chain complex \(\BSD(X_{T})^{\algS}\) is equipped with a \(\delta\)-grading, which increases along the differential by 1, and an Alexander grading, which is preserved along the differential. 
We sometimes denote elements in this bigrading by \(\HFKgr{r}{a_1,a_2}\), where \(r\in\hZ\) is the \(\delta\)-grading and \((a_1,a_2)\in\Z^2\) is the Alexander grading.  
We sometimes indicate the bigrading of a generator \(x\) using super- and subscripts: \(\HFTgr{r}{a_1}{a_2}{x}\).
Given a bigraded chain complex \(C\) over \(\algS\), let \(\delta^r t_1^{a_1} t_2^{a_2} C\) denote the chain complex obtained from \(C\) by increasing the Alexander and \(\delta\)-gradings of all generators by \((a_1,a_2)\) and \(r\), respectively.

\subsubsection*{Classification result}
To describe the classification of graded chain complexes over \(\algS\), we introduce a decoration of \(\partial X_T\) which is in some sense ``dual'' to the bordered sutured structure described above. 
We consider the tangle exterior with a meridional suture around each tangle end. 
The sutures decompose \(\partial X_T\) into a four-punctured sphere \(S^2_4\) and two annuli.  
The four arcs \(\aaa\), \(\aab\), \(\aac\), and \(\aad\) of the framing \(\Phi\) restrict to four arcs on \(\partial X_T\), which we denote by the same combination of letters. 
We label the elementary path on the suture corresponding to the tangle end \(\mathtt{i}\) along the front (resp.\ back) by \(p_i\) (resp.\ \(q_i\)). 
Finally, we place basepoints on the elementary paths labelled \(p_4\) and \(q_3\). 
This is illustrated on the right of \cref{fig:executive-summary}. 
(Note that with minor modifications, this can be turned into an actual bordered sutured structure:  First, one adds basepoints to the sutures at the tangle ends \(\I\) and \(\II\); second, one also adds two arcs around those basepoints such that the corresponding gluing surface remains a four-punctured sphere; for details, see the proof of \cite[Theorem~3.7]{pqMod}. 
The simpler bordered sutured structure on \(X_T\) that we use in this paper was first introduced in \cite[Section~5]{pqSym}.)

As for curves on the one-punctured torus and complexes over the algebra \(\algT\), every non-contractible and primitive immersed curve (with or without local system) on the four-punctured sphere \(S^2_4\) gives rise to a chain complex over \(\algS\). 
Chain complexes arising from curves with local systems are again \emph{extendable} and we have the following result, which is apparent from \cite[Theorem~5.3]{pqSym}. 

\begin{theorem}
	\(\BSD(X_T)^{\algS}\) is extendable for every Conway tangle \(T\) satisfying \eqref{enu:orientation} and~\eqref{enu:connectivity}.
\end{theorem}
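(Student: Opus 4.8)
The plan is to deduce extendability of $\BSD(X_T)^{\algS}$ from the classification of tangle invariants established in \cite{pqSym}, in complete parallel with the knot-complement case, where extendability of $\CFD(X_K)^{\algT}$ is obtained from \cite[Chapter~11]{LOT} and \cite[Theorem~1.4]{HRW}.

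First I would record that extendability, the algebraic property recalled above whose precise definition is immaterial here, is invariant under chain homotopy equivalence. Indeed, a chain complex over $\algS$ is extendable exactly when some, equivalently every, representative of its homotopy class lifts to a type~D structure over the full bordered sutured algebra $\mathcal{A}(\Zb)$. It therefore suffices to exhibit a single representative of the homotopy type of $\BSD(X_T)^{\algS}$ that is extendable.

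Second I would invoke \cite[Theorem~5.3]{pqSym}. The bordered sutured structure on $X_T$ used in the present paper is precisely the one introduced in \cite[Section~5]{pqSym}, and the standing assumptions \eqref{enu:orientation} and~\eqref{enu:connectivity} place us in the horizontal-connectivity setting, in which the relevant one-moving-strand algebra is $\algS=\mathcal{A}^\partial_{43}$ with the conventions fixed above. Under these hypotheses, \cite[Theorem~5.3]{pqSym} identifies $\BSD(X_T)^{\algS}$, up to chain homotopy, with the chain complex associated to a collection of non-contractible primitive immersed curves, carrying possibly nontrivial local systems, on the four-punctured sphere $S^2_4\subset\partial X_T$. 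Combining this identification with the fact recorded immediately above the statement, namely that every chain complex over $\algS$ arising from a collection of curves with local systems is extendable, and then with the homotopy-invariance of extendability from the first step, we conclude that $\BSD(X_T)^{\algS}$ is extendable.

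The main point to be careful about is bookkeeping rather than anything conceptual, and that is where I expect the only real friction to lie. One must verify that the arc diagram $\Zb$, the algebra $\algS$, the placement of basepoints on the $p_4$- and $q_3$-paths, and the grading conventions of this paper agree verbatim with those of \cite{pqSym}, so that \cite[Theorem~5.3]{pqSym} applies without modification; and one must confirm that for Conway tangles satisfying \eqref{enu:orientation} and~\eqref{enu:connectivity} the output of that classification consists precisely of curves with the kind of (possibly trivial, possibly nontrivial) local systems to which the extendability statement above applies. Once the conventions are matched there is nothing further to do. Alternatively, one could bypass \cite{pqSym} altogether and argue directly, either by exhibiting a sufficiently nice bordered sutured Heegaard diagram for $X_T$ or by expressing $\BSD(X_T)^{\algS}$ through a pairing from standard pieces, but deferring to the established classification is clearly the efficient route.
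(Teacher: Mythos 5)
Your argument is exactly the one the paper intends: the text justifies the theorem only by the remark that chain complexes arising from curves with local systems are extendable together with the citation of \cite[Theorem~5.3]{pqSym}, which supplies the identification of \(\BSD(X_T)^{\algS}\) up to homotopy with such a curve complex (namely the one computing the previously defined multicurve invariant of \(T\)). Your write-up simply makes explicit the homotopy-invariance of extendability and the convention-matching that the paper leaves implicit, so there is nothing to add.
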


The following result is \cite[Theorem~0.4]{pqMod}.

\begin{theorem}\label{thm:classification:HFT}
	Every extendable graded chain complex over \(\algS\) is chain homotopic to a chain complex corresponding to a collection of curves on \(S^2_4\) with (possibly trivial) local systems. 
	This collection of curves is unique up to the equivalence from \cref{def:equivalence:local_systems}. 
\end{theorem}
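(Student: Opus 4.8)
The plan is to deduce this classification from a representation-theoretic normal form for complexes over $\algS$, in close parallel with the proof of the analogous statement over the torus algebra in \cite[Theorem~1.5]{HRW} (itself built on \cite[Chapter~11]{LOT}). Here ``complex over $\algS$'' means a type~D structure, i.e.\ a chain complex over the category whose objects are the four idempotents and whose finite-dimensional morphism spaces are read off from $\algS$. First I would apply the cancellation (clean-up) lemma for complexes over a category, as recalled in \cite[Section~1]{pqMod}, to replace the given extendable graded complex $C$ by a chain homotopy equivalent \emph{minimal} one, whose differential has no identity components and which inherits extendability. Since all complexes here have finite total rank over $\F$, the relevant homotopy category is Krull--Schmidt, so it suffices to classify the indecomposable minimal extendable complexes and to check that distinct geometric data produce non-isomorphic ones.

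Next I would establish the combinatorial normal form. The nonzero elements of $\algS$ are exactly the finitely many relation-avoiding paths in the underlying quiver (the idempotents, the arrows $p_1,p_2,p_3,q_1,q_2,q_4$, and the compositions $p_{12},p_{23},p_{123},q_{21},q_{32},q_{321}$), so every differential component of a minimal complex between basis elements is one of these. Analysing the local picture around each basis element vertex by vertex, the monomial relations $p_iq_i=q_ip_i=0$ together with the shape of the quiver sharply restrict which incoming and outgoing components can coexist; the upshot is that basis elements and differential components organize into a disjoint union of ``strings'' (linear chains) and ``bands'' (cyclic chains, carrying an automorphism of the multiplicity space along the closing-up identification). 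At this point one invokes \emph{extendability}: it is precisely the hypothesis that excludes the string summands, so every indecomposable summand is a band, i.e.\ a loop-type complex $C(\gamma,X)$ in the terminology recalled above, with $X$ its local system.

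It then remains to realize each band geometrically and to establish uniqueness. For the first, one makes precise the dictionary between bands and immersed curves on $S^2_4$: each relation-avoiding path of $\algS$ is the label of one elementary arc segment between two of the arcs $\ba,\bb,\bc,\bd$ on $\partial X_T$ (the ``from curves to chain complexes'' recipe, transported to the tangle setting), and concatenating these segments around a band produces a closed immersed curve $\gamma$, well defined up to homotopy, whose multiplicity datum $X\in\mathrm{GL}_n(\F)$ is recorded up to similarity; one checks that $C(\gamma,X)$ reproduces the given band, whence $C\simeq\bigoplus_i C(\gamma_i,X_i)$. For uniqueness, the ``if'' half of the equivalence in \cref{def:equivalence:local_systems} is built into that definition, and for ``only if'' one uses Krull--Schmidt together with the facts that $C(\gamma,X)$ is indecomposable when $X$ is an indecomposable matrix and that the homotopy type of $C(\gamma,X)$ recovers the unoriented curve $\gamma$ (via the gradings, or by closing the tangle up) and the similarity class of the associated block matrix in \cref{def:equivalence:local_systems}.

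The step I expect to be the main obstacle is the combinatorial normal form: pinning down exactly which local configurations of generators-and-differential-components can occur in a minimal complex, and proving that extendability forces every indecomposable summand to be a band rather than a string. This is where the specific relations of $\algS$ and the precise content of extendability do all the work, and it is the technical core of \cite[Theorem~0.4]{pqMod}, whose conclusion is precisely the statement above.
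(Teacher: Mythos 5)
The paper offers no proof of this statement: it is imported wholesale as \cite[Theorem~0.4]{pqMod} (just as the parallel \cref{thm:classification:HF} is imported from \cite{HRW}), so there is no in-paper argument to measure yours against. Your outline --- pass to a minimal model via the clean-up lemma, invoke Krull--Schmidt, derive a string/band normal form from the quiver relations of $\algS$, let extendability kill the string summands, and then translate bands into immersed curves with local systems on $S^2_4$ --- is a faithful summary of how the cited classification is actually established, and you correctly locate the technical core in the combinatorial normal form. But, as your own closing sentence concedes, that core is exactly what you defer back to \cite[Theorem~0.4]{pqMod}; so what you have written is an accurate roadmap to the reference rather than an independent proof, and making it self-contained would require carrying out the arrow-sliding/normal-form argument for $\algS$ in full (including the precise mechanism by which extendability, rather than merely the relations $p_iq_i=0=q_ip_i$, excludes non-loop summands). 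One small slip in your list of nonzero paths: the algebra has no arrow $q_3$, so the longer $q$-compositions are $q_{14}=q_1q_4$ and $q_{214}=q_2q_1q_4$ (these are the labels appearing in the bimodule computations), not $q_{32}$ and $q_{321}$ --- though the paper's own notational list contains the same typo you reproduced.
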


Combining these two results, we now define:

\begin{definition}
	For any Conway tangle \(T\) satisfying \eqref{enu:orientation} and~\eqref{enu:connectivity}, let \(\gamma_T\) denote the collection of immersed curves with local systems on \(\partial X_T\) corresponding to \(\BSD(X_T)^{\algS}\).
\end{definition}

\begin{proof}[Proof of \cref{thm:structure:HFTcurve}] 
	By the gluing formula \cite[Theorem~5.9]{pqMod}, the Lagrangian Floer homology of \(\gamma_{T_K}\) with a rational curve of slope \(\infty\) (a simple closed curve lifting to a straight line of slope \(\infty\) in the planar cover) computes \(\F^2\otimes\HFKhat(U)\cong \F^2\).
	Combining this with the classification of components \cite[Theorem~0.5]{pqSym} and detection of tangle connectivity \cite[Observation~6.1]{pqMod}, we see that \(\gamma_{T_K}\) consists of exactly one rational component of even integer slope with the trivial one-dimensional local system and a (potentially vanishing) number of special components of slope \(\infty\) with trivial local systems. 
	The second part follows from \cite[Theorem~0.10]{pqSym}.
\end{proof}

\subsection{Gradings}

Recall from \cite{ZarevThesis} that in bordered (sutured) Heegaard Floer theory, the unreduced grading \(\gr\) (aka unrefined grading \(\gr'\) in the terminology of \cite{LOT}) of the algebra \(\mathcal{A}(\mathcal{Z})\) associated with an arc diagram \(\mathcal{Z}=(\mathbf{Z},\mathbf{a},M)\) takes values in the non-commutative group \(\Gr(\mathcal{Z})\), which is defined as a certain semi-direct product \(\hZ\rtimes H_1(\mathbf{Z},\mathbf{a})\). 
For convenience, we enlarge the grading set by working with rational coefficients, i.e.\ we replace \(\Gr(\mathcal{Z})\) by \(\Gr(\mathcal{Z})\otimes\Q=\Q\rtimes H_1(\mathbf{Z},\mathbf{a};\Q)\). 
There is also a reduced (refined) grading \(\rgr\) of \(\mathcal{A}(\mathcal{Z})\), which takes values in \(\rGr(\mathcal{Z})\), which we define as \(\Q\rtimes H_1(\mathcal{F}(\mathcal{Z});\Q)\), where \(\mathcal{F}(\mathcal{Z})\) is the surface obtained by thickening the arc diagram \(\mathcal{Z}\). 
By default, we work with the gradings associated with type~A sides. 

\subsubsection*{Identification with the usual grading on the knot invariant}
From \cite[Section~11.1]{LOT}, recall the reduced grading  
\[
\rGr(\Za)
\coloneqq
\Q\rtimes H_1(\FZa;\Q) 
\cong
\Q\rtimes\Q^2
\]
with the group law defined by 
\[
(j_1;p_1,q_1)\cdot (j_2;p_2,q_2)
=
\Big(
j_1+j_2-\det
\left(\!\!
\begin{array}{cc}
p_1 & q_1\\
p_2 & q_2
\end{array}\!\!
\right)
;
p_1+p_2,
q_1+q_2
\Big)
\]
for any \((j_i;p_i,q_i)\in \rGr(\Za)\), \(i=1,2\). 
(When comparing this formula with \cite[Section~11.1]{LOT}, note that the minus sign in front of the determinant comes from the fact that our algebra \(\mathcal{A}\) is the opposite algebra of the usual torus algebra; see \cref{rem:torus_algebra}.)
The grading on \(\mathcal{A}\) is defined by
\[
\rgr(\sigma_1)= (-\half;\half,-\half)
\qquad
\rgr(\sigma_2)= (-\half;\half,\half)
\qquad
\rgr(\sigma_3)= (-\half;-\half,\half)
\]
The grading on \(\CFD(X_K)^{\algT}\) takes values in the coset \(\PI\backslash\rGr(\Za)\), where \(\PI\) is the subgroup generated by \((-\half;-1,0)\). 
Moreover, it suffices to specify the grading of the generators in idempotent \(\iota_\bullet\), which are in bijection with the (\(\Q^2\)-graded) generators of \(\HFKhat(K)\). 
(These two facts are explained in \cite[Theorem~11.26 and Remark~11.28]{LOT}, noting that by \cref{thm:classification:HF}, the twisting parameter \(n\) can be chosen to be 0.)
The first \(\Q\)-grading on \(\HFKhat(K)\) is called the \(\delta\)-grading and the second \(\Q\)-grading is called the Alexander grading \(A\). 
We will write \(\HFKgr{m}{n}\) for an element \((m,n)\in\Q^2\) of this bigrading. 
(Often, the Maslov grading \(M\) is considered instead of the \(\delta\)-grading; one can be determined from the other using the identity \(M+\delta=A\).)
Then if \(\mathbf{x}\) is a (homogeneous) generator of \(\HFKhat(K)\) corresponding to a generator \(\mathbf{x}_0\) of \(\CFD(X_K).\iota_\bullet\), then
\begin{align*}
\rgr(\mathbf{x}_0)
&=
(M(\mathbf{x})-\tfrac{3}{2}A(\mathbf{x});0,-A(\mathbf{x}))
\\
&=
(-\delta(\mathbf{x})-\half A(\mathbf{x});0,-A(\mathbf{x}))
\in 
\PI\backslash\rGr(\Za).
\end{align*}
Given a bigraded chain complex \(C\) over \(\algT\) and \(r,a\in\Q\), let \(\delta^r t^{a} C\) denote the chain complex obtained from \(C\) by increasing the \(\delta\)- and Alexander gradings of all generators by \(r\) and \(a\), respectively.

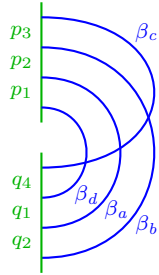
\begin{figure}[b]
	\centering
	\begin{tikzpicture}[scale=0.4, thick]
	\footnotesize
	\draw[darkgreen] (0,0.5) -- (0,4.5);
	\draw[blue] (0,1) .. controls  +(2,0) and +(2,0) .. (0,-2);
	\draw[blue] (0,2) .. controls  +(3.5,0) and +(3.5,0) .. (0,-3);
	\draw[blue] (0,3) .. controls  +(5,0) and +(5,0) .. (0,-4);
	\draw[blue] (0,4) .. controls  +(5,0) and +(5,0) .. (0,-1);
	\draw[darkgreen] (0,-0.5) -- (0,-4.5);
	\draw (0, 3.5) node[left,darkgreen] {\(p_3\)};
	\draw (0, 2.5) node[left,darkgreen] {\(p_2\)};
	\draw (0, 1.5) node[left,darkgreen] {\(p_1\)};
	\draw (0,-1.5) node[left,darkgreen] {\(q_4\)};
	\draw (0,-2.5) node[left,darkgreen] {\(q_1\)};
	\draw (0,-3.5) node[left,darkgreen] {\(q_2\)};

	\draw (1.5,-1.9) node[blue] {\(\bd\)};
	\draw (2.5,-2.4) node[blue] {\(\ba\)};
	\draw (3.5,-2.9) node[blue] {\(\bb\)};
	\draw (3.5,3.5)  node[blue] {\(\bc\)};
	\end{tikzpicture}
	\caption{The arc diagram \(\Zb\) for the bordered sutured structure on \(X_{T}\)
	}\label{fig:arcdiargam:T_K}
\end{figure}

\subsubsection*{Identification with the usual grading on the tangle invariant}
Let \(T\) be a Conway tangle satisfying \eqref{enu:orientation} and~\eqref{enu:connectivity}.
The arc diagram \(\Zb=(\mathbf{Z}_\beta,\mathbf{a}_\beta,M_\beta)\) on the tangle exterior \(X_T\) is shown in \cref{fig:arcdiargam:T_K}. 
The surface \(\FZb\) is a four-punctured sphere; in particular, it is planar. 
Hence, the intersection pairing on \(H_1(\FZb)\) is trivial and \(\rGr(\Zb)\) is actually identical to \(\Q\times H_1(\FZb;\Q)\) as a group. 
To determine the reduced grading on the algebra \(\algS=\mathcal{A}(\Zb)\), we choose the paths \(\{p_1,p_2,p_3,q_2,q_1,q_4\}\) as a basis for \(H_1(\mathbf{Z}_\beta,\mathbf{a}_\beta;\Q)\) so that
the unreduced gradings of the algebra generators are as follows:
\begin{align*}
\gr(p_1)
&=
(-\half;1,0,0;0,0,0),
&
\gr(q_2)
&=
(-\half;0,0,0;1,0,0),
\\
\gr(p_2)
&=
(-\half;0,1,0;0,0,0),
&
\gr(q_1)
&=
(-\half;0,0,0;0,1,0),
\\
\gr(p_3)
&=
(-\half;0,0,1;0,0,0),
&
\gr(q_4)
&=
(-\half;0,0,0;0,0,1).
\end{align*}
Recall from \cite[Definition~1.7]{pqSym} the usual grading group \(\mathfrak{A}\) for \(\algS\), which we will replace by \(\mathfrak{A}\otimes\Q\): 
\[
\mathfrak{A}
\coloneqq
\Q^4/\im(\Q\rightarrow\Q^4, n\mapsto(n,n,n,n)).
\]
We identify \(H_1(\FZb;\Q)\) with \(\mathfrak{A}\) 
via the homomorphism from \(\mathfrak{A}\) to \(H_1(\FZb;\Q)\) defined by
\begin{align*}
(1,0,0,0)
&\mapsto
(\half,0,0;0,\half,0),
&
(0,1,0,0)
&\mapsto
(0,\half,0;\half,0,0),
\\
(0,0,1,0)
&\mapsto
(0,0,\half;-\half,-\half,-\half),
&
(0,0,0,1)
&\mapsto
(-\half,-\half,-\half;0,0,\half).
\end{align*}
We now choose a grading reduction \(r\) with \(\iota_a\) as the base idempotent (so \(r(\iota_a)=0\)) and
\begin{align*}
r(\iota_b)
&=
(0;0,-\half,0;\half,0,0),
\\
r(\iota_c)
&=
(-\half;0,-\half,-\half;0,-\half,-\half),
\\
r(\iota_d)
&=
(0;\half,0,0;0,-\half,0).
\end{align*}
Given a basic algebra element \(\xi=\iota_t.\xi.\iota_s\in\algS\) for some \(t,s\in\{\arcA,\arcB,\arcC,\arcD\}\), the reduced grading is computed using the formula 
\begin{equation}\label{req:grading_reduction}
\rgr(\xi)=(r(\iota_t))^{-1}\gr(\xi) ~r(\iota_s).
\end{equation}
An elementary calculation shows that the induced reduced grading is given by
\begin{align*}
\rgr(p_1)
=
\rgr(q_1)
&
=
(-\half;1,0,0,0),
&
\rgr(p_3)
&=
(-\half;0,0,1,0),
\\
\rgr(p_2)
=
\rgr(q_2)
&=
(-\half;0,1,0,0),
&
\rgr(q_4)
&=
(-\half;0,0,0,1).
\end{align*}
(To determine \(\rgr(p_{3})\) and \(\rgr(q_{4})\), it might be easier to compute \(\rgr(p_{23})\) and \(\rgr(q_{14})\) first.)
Finally, since the tangle \(T\) satisfies \eqref{enu:orientation} and~\eqref{enu:connectivity}, we simplify the grading further via the homomorphism \(\varphi\co\mathfrak{A}\rightarrow \Q^2\) sending
\begin{align*}
(1,0,0,0)&\mapsto (-1,0), 
&
(0,0,0,1)&\mapsto (1,0),
\\
(0,1,0,0)&\mapsto (0,-1),
&
(0,0,1,0)&\mapsto (0,1).
\end{align*}
Then the gradings \((\delta,A)\) and \(((-\id_{\Q})\times\varphi)\circ\rgr\) on the algebra \(\algS\) agree. 

The grading on \(\BSD(X_T)\) takes values in \(\mathcal{P}_2\backslash\rGr(\Zb)\), where \(\mathcal{P}_2\) is the subgroup \(\rgr(\pi_2(\mathbf{x},\mathbf{x};\Q))\) defined as follows: 
First we fix a Heegaard diagram for \(X_T\) and we pick some base generator \(\mathbf{x}\) of \(\BSD(X_T).\iota_{a}\). 
(This is always possible.)
We now consider \(\pi_2(\mathbf{x},\mathbf{x};\Q)\), the group of periodic domains on a Heegaard diagram for \(X_T\) (with coefficients in \(\Q\)).
Each domain \(B\) has a certain unreduced grading \(\gr(B)\). 
Let \(R(\gr(B))\) be the element of \(\Gr(\Zb)\) obtained from \(\gr(B)\) by multiplying all components except the first by \(-1\). 
Then we define \(\rgr(B)\) as the element of \(\rGr(\Zb)\) corresponding to \(R(\gr(B))\) using the above grading reduction~\(r\). 
\(\mathcal{P}_2\) is now the subgroup generated by the elements \(\rgr(B)\) for all periodic domains \(B\in\pi_2(\mathbf{x},\mathbf{x};\Q)\).
To understand this subgroup better, we choose a Heegaard diagram with a single \(\alpha\)-circle parallel to the front suture. 
(This is always possible.)
The periodic domains of such a Heegaard diagram are generated by a single domain~\(\psi\).
An elementary computation (similar to the proof of \cite[Lemma~5.12]{HDsForTangles}) shows that the reduced grading of \(\psi\) is equal to \(\pm(0;1,0,0,1)\). 
Hence, we can use the above identification of the gradings on the algebra \(\algS\) to also identify the gradings on \(\BSD(X_T)\). 
(Alternatively, in the case that \(T\) is a double tangle \(T_K\) for some knot \(K\), this identification will also emerge from the proof of \cref{thm:BS_computations}).

\section{Computations in bordered sutured Heegaard Floer theory}\label{sec:BS:computations}

The goal of this section is to prove the following result.

\begin{theorem}\label{thm:BS_computations}
	Let \(X_K\) be the complement of an open tubular neighbourhood of a knot \(K\) in \(S^3\).
	We equip \(\partial X_K\) with the standard bordered parametrization by meridian and longitude. 
	Then
	\[
	\BSD(X_{T_K})^{\algS}
	\cong
	\CFD(X_K)^{\algT}
	\boxtimes
	\BSADY,
	\]
	where \(\BSADY\) is the type~AD bimodule shown in \cref{fig:doubling-bimodule}. 
	The reduced grading of \(\BSADY\) takes values in \((\rGr(\Za)\times\mathfrak{A})/\PIII\) where \(\PIII\) is the subgroup generated by 
	\[
	(0;0,0;0,1,1,0)
	\quad\text{and}\quad
	(-\half;0,1;0,0,2,2).
	\]
	Moreover, if \(\rgr(\arcA)=\PIII\) then \(\rgr(\arcC)=(-\half;-1,0;0,0,0,0) \cdot\PIII\). 
\end{theorem}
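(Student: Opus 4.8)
The plan is to realise the tangle exterior $X_{T_K}$ as a gluing of the knot exterior $X_K$ along its bordered torus boundary with a \emph{fixed} bordered sutured manifold $\mathcal{Y}$ that implements the doubling operation, and then to invoke a pairing theorem. Concretely, recall from the construction of $T_K$ that $L=K\cup\lambda$ cobounds an annulus $A$ and that the ball $b$ meets $A$ in a trivially embedded band; running this construction relative to $\partial X_K$ produces a separating torus in $X_{T_K}$ which cuts it into a copy of $X_K$ (carrying the arc diagram $\Za$ on its torus boundary) and a complementary piece $\mathcal{Y}$, so that $X_{T_K}\cong X_K\cup_{\Za}\mathcal{Y}$. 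The essential observation is that $\mathcal{Y}$ -- a regular neighbourhood of the annulus $A$ together with the band $b\cap A$, equipped with $\Za$ on the gluing torus and $\Zb$ on its remaining four-punctured-sphere boundary -- does not depend on $K$. Its invariant can therefore be computed once and for all, and this invariant is the type~AD bimodule $\BSADY$ of \cref{fig:doubling-bimodule}; carrying out this identification of $\mathcal{Y}$ precisely is the first step.

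Next I would compute $\BSADY$ from a bordered sutured Heegaard diagram for $\mathcal{Y}$ adapted to both arc diagrams $\Za$ and $\Zb$. Since $\mathcal{Y}$ has small complexity, this is a finite count: enumerate the generators (intersection points satisfying the idempotent constraints on both sides), then read off the type~$A$ $\mathcal{A}_\infty$-operations over $\algT$ and the type~$D$ operation over $\algS$ from the rigid holomorphic polygons, and compare with \cref{fig:doubling-bimodule}. Alternatively one can factor $\mathcal{Y}$ as a composition of elementary bordered sutured cobordisms -- a band attachment together with a merge of sutures -- whose bimodules are already available, and compose these via $\boxtimes$. Either way, with $\BSADY$ in hand the pairing theorem for bordered sutured manifolds \cite{ZarevThesis} gives
\[
\BSD(X_{T_K})^{\algS}\;\simeq\;\CFD(X_K)^{\algT}\boxtimes\BSADY,
\]
where one must take care that the torus algebra appears on the type~$A$ side -- this is precisely why we use the opposite-algebra convention of \cref{rem:torus_algebra}.

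For the grading statement, the reduced grading of a type~AD bimodule over $(\algT,\algS)$ takes values, after the identifications fixed in \cref{sec:conventions}, in a quotient of $\rGr(\Za)\times\mathfrak{A}$ by the subgroup generated by the reduced gradings of the periodic domains of the chosen Heegaard diagram for $\mathcal{Y}$. I would therefore: (i) exhibit a generating set for the periodic domains $\pi_2(\mathbf{x},\mathbf{x})$ of that diagram -- there should be two of them, matching the two generators of $\PIII$ -- compute their unreduced gradings, convert these to reduced gradings using the grading reduction $r$ chosen for $\algS$ in \cref{sec:conventions}, and verify that they come out as $(0;0,0;0,1,1,0)$ and $(-\half;0,1;0,0,2,2)$, so that the subgroup they generate is $\PIII$; and (ii) determine the grading of the generator $\arcC$ relative to $\arcA$ by evaluating the reduced grading of a domain (equivalently, a chain of bimodule arrows) joining them, and check that once $\rgr(\arcA)$ is normalised to the trivial coset $\PIII$ one obtains $\rgr(\arcC)=(-\half;-1,0;0,0,0,0)\cdot\PIII$, as claimed.

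The main obstacle I expect is not a single deep point but the density of conventions that must simultaneously line up: building a Heegaard diagram for $\mathcal{Y}$ that realises $\Za$ and $\Zb$ with the basepoints in the positions fixed in \cref{sec:conventions}; matching orientations so that the pairing theorem produces $\BSD(X_{T_K})^{\algS}$ rather than a dual object; and carrying the grading reduction $r$ and the identification of $\rGr(\Zb)$ consistently through the periodic-domain computation, since the exact entries (including the half-integers) of the generators of $\PIII$ and of the coset of $\arcC$ leave no slack. The holomorphic polygon count itself should be routine given the size of $\mathcal{Y}$, and the final answer can be sanity-checked by pairing $\BSADY$ with $\CFD(X_U)^{\algT}$ and $\CFD(X_{T_{2,3}})^{\algT}$ from \cref{exa:first_complex} and comparing the results with \cref{exa:main:intro}.
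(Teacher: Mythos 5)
Your skeleton matches the paper's: decompose \(X_{T_K}=X_K\cup\mathcal{Y}\) along the bordered torus boundary, note that the doubling piece \(\mathcal{Y}\) is independent of \(K\), compute its type~AD bimodule from a bordered sutured Heegaard diagram, apply Zarev's pairing theorem, and obtain the grading statement from the reduced gradings of the two generating periodic domains together with a domain joining the generators in idempotents \(\arcA\) and \(\arcC\). The grading part of your plan in particular is exactly what the paper does.

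The genuine gap is in the step where you claim the operations of \(\BSADY\) can simply be ``read off from the rigid holomorphic polygons'' as a routine finite count. Enumerating generators and candidate domains from \(\mathcal{H}_Y\) is indeed combinatorial (this is \cref{fig:doubling:domains}), but for a substantial subset of the candidate domains --- the dashed arrows, for instance the domain \(\{\sigma_1,\sigma_2,p_2,p_3\}\) --- the number of pseudoholomorphic representatives is \emph{not} determined by inspection of the diagram, and the paper explicitly refrains from computing these counts directly. Instead it argues in two stages: (i) three candidate domains are excluded because their embedded index is not \(1\) (\cref{tab:index}); and (ii) every remaining ambiguous arrow is forced to be present \emph{indirectly}, by pairing \(\typeA{\algT}{\BSAD(\mathcal{H}_Y)}^{\algS}\) with the test complexes \(\CFD(X_U(i))\) for several framings \(i\) of the unknot and comparing the result with the known invariants \(\BSD(X_{Q_{-2i}})^{\algS}\) of the rational tangles \(Q_{-2i}\) (\cref{tab:test-pairings}). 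Your closing suggestion to ``sanity-check'' against \(\CFD(X_U)^{\algT}\) is in fact the essential mechanism rather than a check: to turn it into a proof you must first pin down the finite list of possible arrows via positivity and multiplicity constraints on domains, and then choose enough test pairings (several framings, not just one) to discriminate every ambiguous arrow. Your proposed alternative --- factoring \(\mathcal{Y}\) into elementary bordered sutured cobordisms whose bimodules are ``already available'' --- is also optimistic, since such bimodules are not available in the conventions used here and would require computations of comparable difficulty.
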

\begin{figure}[b]
	\centering
	\begin{tikzpicture}[xscale=1.7,yscale=1.5]
	\draw[gb] (-1.25,-1.35) -- (-1.25,1.35) -- (1.25,1.35) -- (1.25,-1.35) -- cycle;
	\draw[gd] (-3.7,0.5) -- %
						(-2.75,0.5) -- (-2,-1.5) -- (2,-1.5) -- (2.75,0.5) -- %
						( 3.7,0.5) -- %
						(3.7,-2.8) -- %
						(-3.7,-2.8) -- %
						cycle;
	\draw[gb,fill] (0,-1.35) circle [radius=1pt];
	\draw[gb,fill=white] (0,-1.5) circle [radius=1pt];
	
	\draw (-1,1)   node(lt){\textcolor{darkgreen}{\(c\)}};
	\draw (-1,-1)   node(lb){\textcolor{red}{			\(a\)}};
	\draw (1,1)   node(rt){\textcolor{blue}{			\(b\)}};
	\draw (1,-1)   node(rb){\textcolor{blue}{			\(b'\)}};
	
	\draw (-3,0)   node(LT){\textcolor{blue}{			\(B\)}};
	\draw (-3,-2)   node(LB){\textcolor{gold}{		\(D\)}};
	\draw (3,0)   node(RT){\textcolor{blue}{			\(B'\)}};
	\draw (3,-2)   node(RB){\textcolor{gold}{			\(D'\)}};
	
	\footnotesize
	\draw[->] (lt) [bend right=25] to node[t]{\((\sigma_1|p_3)\)} (LT);
	\draw[->] (rt) [bend left=25] to node[t]{\((\sigma_1|1)\)} (RT);
	
	\draw[->] (lt) to node[t]{\((-|q_{214})\)} (rt);
	\draw[->] (lb) to node[t]{\((-|q_2)\)} (rb);
	
	\draw[->] (LB) [bend left ] to node[t]{\((-|q_{21})\)} (LT);
	\draw[->] (RB) [bend right] to node[t]{\((-|q_{21})\)} (RT);
	
	\draw[->] (lb) [bend left =40] to node[t]{\((\sigma_3|p_1)\)} (LB);
	\draw[->] (rb) [bend right=40] to node[t]{\((\sigma_3|p_{12})\)} (RB);
	
	\draw[->] (LT) [bend left]  to node[t]{\((\sigma_{23}|p_{12})\)} (LB);
	\draw[->] (RT) [bend right] to node[t]{\((\sigma_{23}|p_{12})\)} (RB);
	
	\draw[->] (LT) [bend left] to node[t,pos=0.35] {\((\sigma_2|p_2)\)} (lb);
	\draw[->] (RT) [bend right] to node[t,pos=0.35]{\((\sigma_2|1)\)} (rb);
	
	\draw[->] (lt) [bend left=25]  to node[t]{\parbox{1.5cm}{\centering\((\sigma_{12}|p_{23})\)\\+\\\((\sigma_{12}|q_{14})\)}} (lb);
	\draw[->] (rt) [bend right=25] to node[t] {\((\sigma_{12}|1)\)} (rb);
	
	\draw[->] (lt) [bend left=20] to node[t,pos=0.55] {\((\sigma_{123}|p_{123})\)} (LB);
	\draw[->] (rt) [bend right=20] to node[t,pos=0.55] {\((\sigma_{123}|p_{12})\)} (RB);
	
	\draw (0.4,-0.65)   node [t](x3){\((\sigma_1|q_4)\)};
	\draw (lt) .. controls  +(0.6,-0.3) and +(-0.3,0.6) .. (x3);
	\draw[->] (x3) .. controls  +(1,-2) and +(-0.6,-0.5) .. (RB);

	\draw (-0.4,-1.8)   node [t](x4){\((\sigma_2|q_1)\)};
	\draw[<-] (lb) .. controls  +(0.15,-0.3) and +(-0.1,0.1) .. (x4);
	\draw (x4) .. controls  +(1,-1) and +(-0.5,-1) .. (RB);
	
	
	\end{tikzpicture}
	\vspace{-20pt}
	\caption{The type~AD bimodule \(\typeA{\algT}{\mathcal{Y}}^{\algS}\). 
		The right idempotents of the generators are determined by the letter of the generator names; for example, the right idempotent of the generator named \textcolor{blue}{\(B'\)} is \(\iota_b\). 
		Generators named by a lower-case letter sit in the left idempotent \(\iota_\bullet\), those named by an upper-case letter belong to the left idempotent \(\iota_\circ\). }\label{fig:doubling-bimodule}
\end{figure}
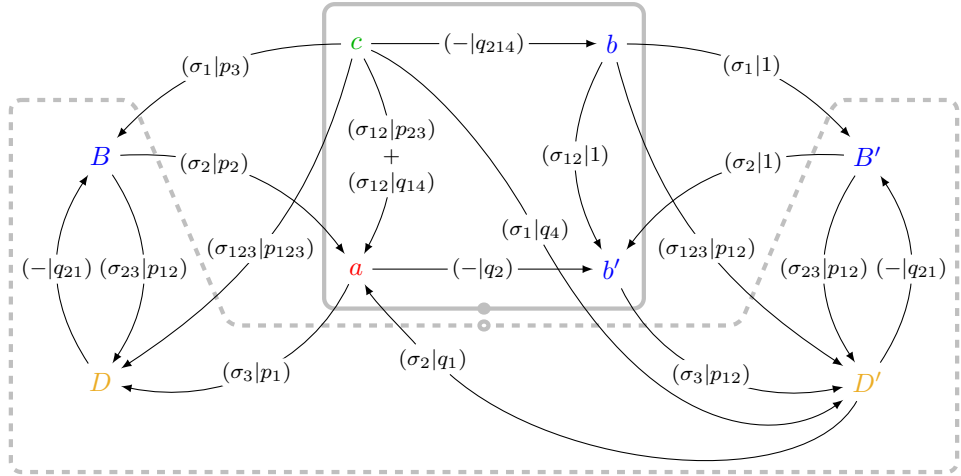

\begin{figure}[p]
	\centering\footnotesize
	\begin{subfigure}{0.5\textwidth}
		\centering
		\labellist
		\pinlabel \(\textcolor{gray}{K}\) at 145 145
		\pinlabel \(\textcolor{red}{\am}\) at 195 50
		\pinlabel \(\textcolor{red}{\al}\) at 70 220
		\pinlabel \(\textcolor{darkgreen}{\sigma_1}\) at 160 220
		\pinlabel \(\textcolor{darkgreen}{\sigma_2}\) at 210 220
		\pinlabel \(\textcolor{darkgreen}{\sigma_3}\) at 210 185
		\endlabellist
		\includegraphics[width=0.8\textwidth]{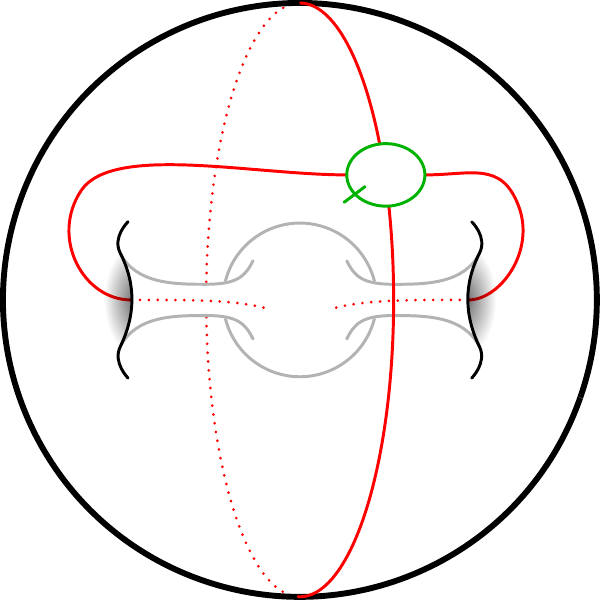}
		\caption{The parametrization on \(\partial X_K\)}\label{fig:doubling:X_K}
	\end{subfigure}%
	\begin{subfigure}{0.5\textwidth}		
		\centering
		\labellist
		\setlength{\fboxsep}{1pt}
		\pinlabel \(\textcolor{gray}{T_K}\) at 145 145
		\pinlabel \(\textcolor{blue}{\ba}\) at 30 140
		\pinlabel \(\textcolor{blue}{\bb}\) at 187 40
		\pinlabel \(\textcolor{blue}{\bc}\) at 260 140
		\pinlabel \(\textcolor{blue}{\bd}\) at 187 260
		\pinlabel \mybg{darkgreen}{\(p_1\)} at 175 155
		\pinlabel \mybg{darkgreen}{\(p_2\)} at 173 113
		\pinlabel \mybg{darkgreen}{\(p_3\)} at 211 114
		\pinlabel \mybg{darkgreen}{\(q_1\)} at 95 255
		\pinlabel \textcolor{darkgreen}{\(q_2\)} at 93 225
		\pinlabel \mybg{darkgreen}{\(q_4\)} at 135 255
		\endlabellist
		\includegraphics[width=0.8\textwidth]{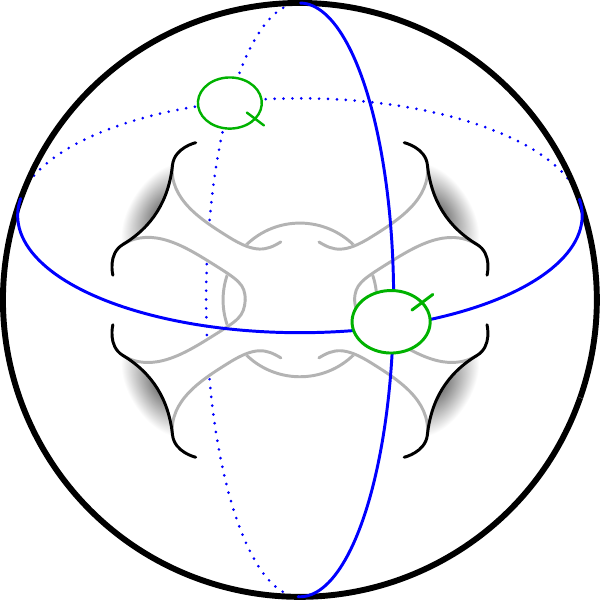}
		\caption{The parametrization on \(\partial X_{T_K}\) }\label{fig:doubling:T_K}
	\end{subfigure}
	\bigskip\\
	\begin{subfigure}{0.5\textwidth}
		\centering
		\labellist
		\setlength{\fboxsep}{1pt}
		\pinlabel \(\textcolor{blue}{\ba}\) at 85 137
		\pinlabel \(\textcolor{blue}{\bb}\) at 135 70
		\pinlabel \(\textcolor{blue}{\bc}\) at 272 120
		\pinlabel \(\textcolor{blue}{\bd}\) at 40 225
		\pinlabel \(\textcolor{red}{\am}\) at 169 60
		\pinlabel \(\textcolor{red}{\al}\) at 229 225
		\pinlabel \textcolor{red}{\(\alpha_1\)} at 214 130
		\pinlabel \textcolor{red}{\(\alpha_2\)} at 25 199
		\pinlabel \mybg{darkgreen}{\(p_1\)} at 128 162
		\pinlabel \mybg{darkgreen}{\(p_2\)} at 128 122
		\pinlabel \mybg{darkgreen}{\(p_3\)} at 165 122
		\pinlabel \textcolor{darkgreen}{\(q_1\)} at 42 167
		\pinlabel \textcolor{darkgreen}{\(q_2\)} at 41 135
		\pinlabel \textcolor{darkgreen}{\(q_4\)} at 24 169
		\pinlabel \textcolor{darkgreen}{\(\sigma_1\)} at 155 256
		\pinlabel \textcolor{darkgreen}{\(\sigma_2\)} at 196 256
		\pinlabel \textcolor{darkgreen}{\(\sigma_3\)} at 200 227
		\endlabellist
		\includegraphics[width=0.8\textwidth]{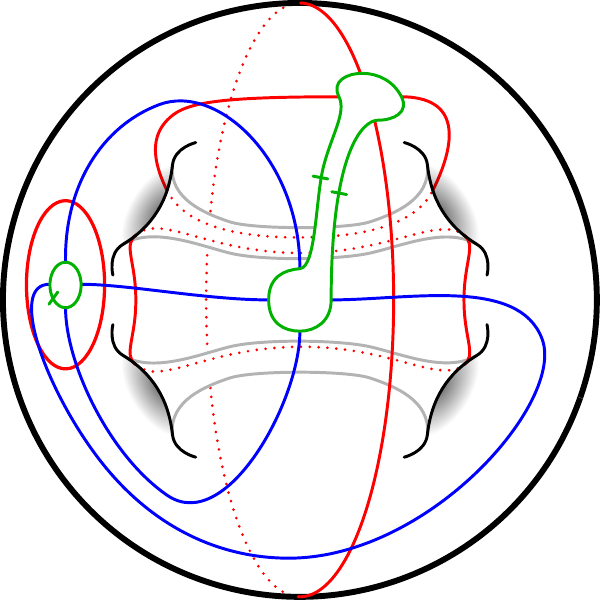}
		\caption{A Heegaard decomposition for \(Y\)}\label{fig:doubling:Y}
	\end{subfigure}%
	\begin{subfigure}{0.5\textwidth}
		\centering
		\labellist
		\setlength{\fboxsep}{1pt}
		\pinlabel \textcolor{gray}{A} at 220 220
		\pinlabel \reflectbox{\textcolor{gray}{A}} at 69 220
		\pinlabel \textcolor{gray}{B} at 220 70
		\pinlabel \reflectbox{\textcolor{gray}{B}} at 69 70
		\pinlabel \(\textcolor{blue}{\ba}\) at 120 134
		\pinlabel \(\textcolor{blue}{\bb}\) at 127 70
		\pinlabel \(\textcolor{blue}{\bc}\) at 265 40
		\pinlabel \(\textcolor{blue}{\bd}\) at 45 257
		\pinlabel \(\textcolor{red}{\am}\) at 243 257
		\pinlabel \(\textcolor{red}{\al}\) at 195 227
		\pinlabel \(\textcolor{red}{\al}\) at 130 227
		\pinlabel \textcolor{red}{\(\alpha_1\)} at 230 170
		\pinlabel \textcolor{red}{\(\alpha_1\)} at 60 198
		\pinlabel \textcolor{red}{\(\alpha_2\)} at 10 165
		\pinlabel \textcolor{darkgreen}{\(\sigma_2\)} at 70 240
		\pinlabel \textcolor{darkgreen}{\(\sigma_2\)} at 200 250
		\pinlabel \textcolor{darkgreen}{\(\sigma_3/p_1\)} at 195 190
		\pinlabel \textcolor{darkgreen}{\(\sigma_3/p_1\)} at 105 190
		\pinlabel \textcolor{darkgreen}{\(p_2\)} at 110 100
		\pinlabel \textcolor{darkgreen}{\(p_3\)} at 157 90
		\pinlabel \textcolor{darkgreen}{\(q_1\)} at 62 160
		\pinlabel \textcolor{darkgreen}{\(q_2\)} at 62 130
		\pinlabel \textcolor{darkgreen}{\(q_4\)} at 33 160
		\pinlabel \textcolor{darkgreen}{\(\sigma_1\)} at 135 250
		\pinlabel \(a_1\) at 114 150
		\pinlabel \(a_2\) at 88 150
		\pinlabel \(b_2\) at 53 100
		\pinlabel \(c_\circ\) at 180 150
		\pinlabel \(c'_\circ\) at 268 100
		\pinlabel \(c_1\) at 231 135
		\pinlabel \(c_2\) at 10 123
		\pinlabel \(d_2\) at 33 185
		\pinlabel \(d_\bullet\) at 95 228
		\endlabellist
		\includegraphics[width=0.8\textwidth]{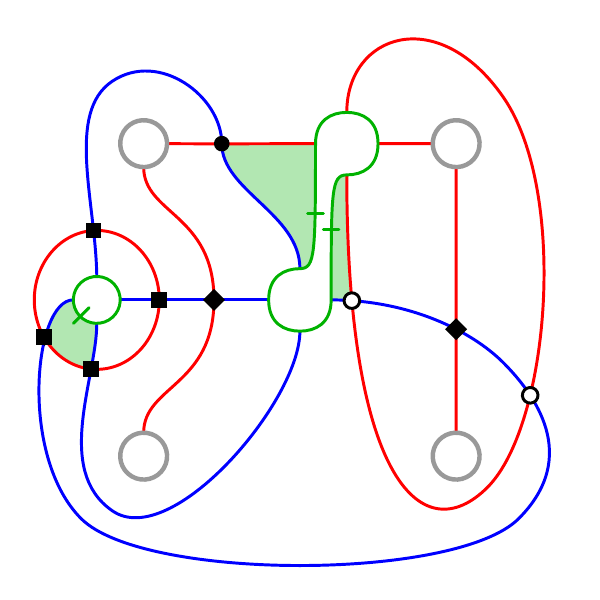}
		\caption{The Heegaard diagram \(\mathcal{H}_Y\) for \(Y\)}\label{fig:doubling:HD}
	\end{subfigure}
	\caption{The tangle complement \(X_{T_K}\) (b) is obtained from the knot complement \(X_K\) (a) by gluing the bordered sutured manifold \(Y\) (c) to it.  Figure (d) shows a Heegaard diagram from which we compute the type~AD bimodule \(\typeA{\algT}{\mathcal{Y}}^{\algS}\).}
\label{fig:doubling}
\end{figure}

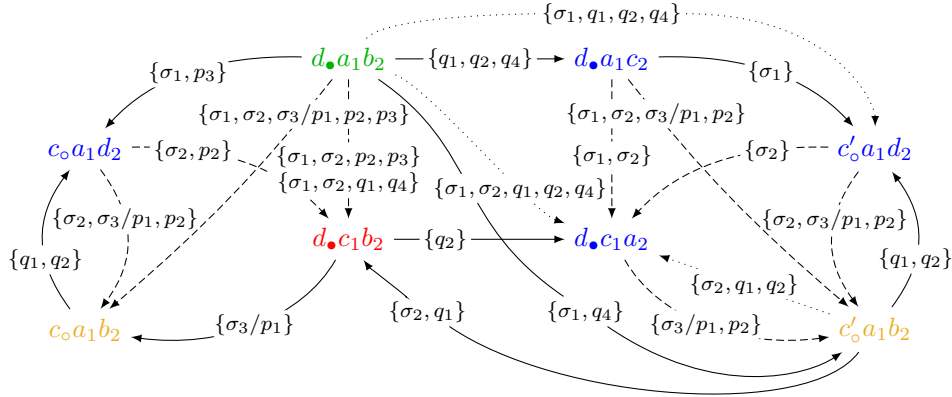
\begin{figure}[p]
	\centering
	\begin{tikzpicture}[xscale=1.75,yscale=1.2]
	\draw (-1,1)   node(lt){\textcolor{darkgreen}{\(d_\bullet a_1b_2\)}};
	\draw (-1,-1)   node(lb){\textcolor{red}{			\(d_\bullet c_1b_2\)}};
	\draw (1,1)   node(rt){\textcolor{blue}{			\(d_\bullet a_1c_2\)}};
	\draw (1,-1)   node(rb){\textcolor{blue}{			\(d_\bullet c_1a_2\)}};
	
	\draw (-3,0)   node(LT){\textcolor{blue}{			\(c_\circ  a_1d_2\)}};
	\draw (-3,-2)   node(LB){\textcolor{gold}{		\(c_\circ  a_1b_2\)}};
	\draw (3,0)   node(RT){\textcolor{blue}{			\(c'_\circ a_1d_2\)}};
	\draw (3,-2)   node(RB){\textcolor{gold}{			\(c'_\circ a_1b_2\)}};
	
	\footnotesize
	\draw[->] (lt) [bend right] to node[t]{\(\{\sigma_1,p_3\}\)} (LT);
	\draw[->] (rt) [bend left] to node[t]{\(\{\sigma_1\}\)} (RT);
	
	\draw[->] (lt) to node[t]{\(\{q_1,q_2,q_4\}\)} (rt);
	\draw[->] (lb) to node[t,pos=0.3]{\(\{q_2\}\)} (rb);
	
	\draw[->] (LB) [bend left =27] to node[t,pos=0.35]{\(\{q_1,q_2\}\)} (LT);
	\draw[->] (RB) [bend right=27] to node[t,pos=0.35]{\(\{q_1,q_2\}\)} (RT);
	
	\draw[->			 ] (lb) [bend left =40] to node[t]{\(\{\sigma_3/p_1\}\)} (LB);
	\draw[->,d] (rb) [bend right=40] to node[t]{\(\{\sigma_3/p_1,p_2\}\)} (RB);
	
	\draw[->,d] (LT) [bend left=27] to node[t,pos=0.35]{\(\{\sigma_2,\sigma_3/p_1,p_2\}\)} (LB);
	\draw[->,d] (RT) [bend right=27] to node[t,pos=0.35] {\(\{\sigma_2,\sigma_3/p_1,p_2\}\)} (RB);
	
	\draw[->,d] (LT) [bend left] to node[t,pos=0.25] {\(\{\sigma_2,p_2\}\)} (lb);
	\draw[->,d] (RT) [bend right] to node[t,pos=0.25]{\(\{\sigma_2\}\)} (rb);
	
	\draw[->,d] (lt) to node[t,pos=0.65]{\parbox{2.05cm}{\centering\(\{\sigma_1,\sigma_2,p_2,p_3\}\)\\ \(\{\sigma_1,\sigma_2,q_1,q_4\}\)}} (lb);
	\draw[->,d] (rt) to node[t,pos=0.55] {\(\{\sigma_1,\sigma_2\}\)} (rb);
	
	\draw[->,d] (lt) [bend left=5] to node[t,pos=0.13] {\(\{\sigma_1,\sigma_2,\sigma_3/p_1,p_2,p_3\}\)} (LB);
	\draw[->,d] (rt) [bend right=5] to node[t,pos=0.13] {\(\{\sigma_1,\sigma_2,\sigma_3/p_1,p_2\}\)} (RB);

	\draw (0.8,-1.8)   node [t](x3){\(\{\sigma_1,q_4\}\)};
	\draw (lt) .. controls  +(1,-1) and +(-1,1) .. (x3);
	\draw[->] (x3) .. controls  +(1,-1) and +(-0.5,-0.5) .. (RB);
	
	\draw (-0.4,-1.8)   node [t](x4){\(\{\sigma_2,q_1\}\)};
	\draw[<-] (lb) .. controls  +(0.15,-0.3) and +(-0.1,0.1) .. (x4);
	\draw (x4) .. controls  +(1,-1) and +(-0.5,-1) .. (RB);
	
	\draw[->,dotted] (RB) to node[t]{\(\{\sigma_2,q_1,q_2\}\)} (rb);
	
	\draw[->,dotted] (lt) .. controls  +(0.6,-0.3) and +(-0.6,0.3) .. node[t,pos=0.7] {\(\{\sigma_1,\sigma_2,q_1,q_2,q_4\}\)} (rb);
	
	\draw (1,1.5)   node [t](x2){\(\{\sigma_1,q_1,q_2,q_4\}\)};
	\draw[dotted,-] (lt) .. controls  +(0.3,0.5) and +(-0.5,0) .. (x2);
	\draw[dotted,->] (x2) .. controls  +(1.7,0) and +(0,0.5) .. (RT);
	
	\end{tikzpicture}
	\vspace{-0.5cm}
	\caption{Domains connecting generators}\label{fig:doubling:domains}
\end{figure}
	\begin{figure}[p]\small
		\begin{tabular}{ccccccccc}
			\toprule
			domain \(B\)
			&
			\(e(B)\)
			&
			\(n_{\mathbf{x}}(B)\)
			&
			\(n_{\mathbf{y}}(B)\)
			&
			\(\#\vec{\bm{\rho}}_1\)
			&
			\(\#\vec{\bm{\rho}}_2\)
			&
			\(\iota(\bm{\rho}_1)\)
			&
			\(\iota(\bm{\rho}_2)\)
			&
			\(\operatorname{ind}(B,\vec{\bm{\rho}}_1,\vec{\bm{\rho}}_2)\)
			\\\midrule
			\(\{\sigma_1,q_1,q_2,q_4\}\)
			&
			\(-\tfrac{1}{2}\)
			&
			\(\tfrac{1}{2}\)
			&
			\(1\)
			&
			\(1\)
			&
			\(1\)
			&
			\(-\tfrac{1}{2}\)
			&
			\(-\tfrac{1}{2}\)
			&
			\textbf{2}%
			\smallskip\\
			\(\{\sigma_2,q_1,q_2\}\)
			&
			\(-1\)
			&
			\(\tfrac{3}{4}\)
			&
			\(\tfrac{5}{4}\)
			&
			\(1\)
			&
			\(1\)
			&
			\(-\tfrac{1}{2}\)
			&
			\(-\tfrac{1}{2}\)
			&
			\textbf{2}%
			\smallskip\\
			\(\{\sigma_1,\sigma_2,q_1,q_2,q_4\}\)
			&
			\(-\tfrac{3}{2}\)
			&
			\(1\)
			&
			\(\tfrac{3}{2}\)
			&
			\(1\)
			&
			\(\geq1\)
			&
			\(-\tfrac{1}{2}\)
			&
			\(-\tfrac{1}{2}\)
			&
			\(\mathbf{\geq2}\)
			\\
			\bottomrule
		\end{tabular} 
		\caption{Computation of the embedded indices of the domains on the dotted arrows in \cref{fig:doubling:domains} following \cite[Definition~8.4.2]{ZarevThesis}. 
			Here, \(B\in\pi_2(\mathbf{x},\mathbf{y})\) denotes a domain connecting two generators \(\mathbf{x}\) and \(\mathbf{y}\). 
			The symbols \(\vec{\bm{\rho}}_1\) and \(\vec{\bm{\rho}}_2\) denote sets of Reeb chords on the type~D side and type~A side, respectively.
		}
		\label{tab:index}
	\end{figure}
	\begin{figure}[p]
		\small
		\newcommand{\tpl}{2.9cm}
		\begin{tabular}{cccc}
			\toprule
			\(i\)
			&
			\(\mathcal{T}^{\algT}=\mathcal{T}_i^{\algT}\)
			&
			\(
			\mathcal{S}^{\algS}
			=
			\mathcal{T}^{\algT}
			\boxtimes
			\typeA{\algT}{\BSAD(\mathcal{H}_Y)}^{\algS}
			\)
			&
			\(\gamma_{Q_{-2i}}\)
			\\\midrule
			\(0\)
			&
			\begin{tikzcd}
			\bullet
			\arrow[in=30,out=-30,looseness=5]{rl}[swap]{\sigma_{12}}
			\end{tikzcd}
			&
			\(
			\vc{%
				\begin{tikzpicture}[ppscale]
				\draw (-1,1)   node(lt){\textcolor{darkgreen}{\(c\)}};
				\draw (-1,-1)   node(lb){\textcolor{red}{			\(a\)}};
				\draw (1,1)   node(rt){\textcolor{blue}{			\(b\)}};
				\draw (1,-1)   node(rb){\textcolor{blue}{			\(b'\)}};
				\draw [g] (rt) to (rb);
				\draw (1,1)   node(rt){\textcolor{blue}{			\(b\)}};
				\draw (1,-1)   node(rb){\textcolor{blue}{			\(b'\)}};

				\draw[gb] (-1.35,-1.35) -- (-1.25,1.35) -- (1.25,1.35) -- (1.25,-1.35) -- cycle;

				\footnotesize
				
				\draw[->] (lt) to node[below]{\(q_{214}\)} (rt);
				\draw[->] (lb) to node[above]{\(q_2\)} (rb);
				
				\draw[->] (lt) [d,bend right=15]  to node[left]{\(p_{23}\)} (lb);
				\draw[->] (lt) [d,bend left=15]  to node[right]{\(q_{14}\)} (lb);
				
				\draw[->] (rt) [d] to node[left] {\(1\)} (rb);
				
				\end{tikzpicture}
			}
			\)
			&
			\(\vc{\includegraphics[width=\tpl]{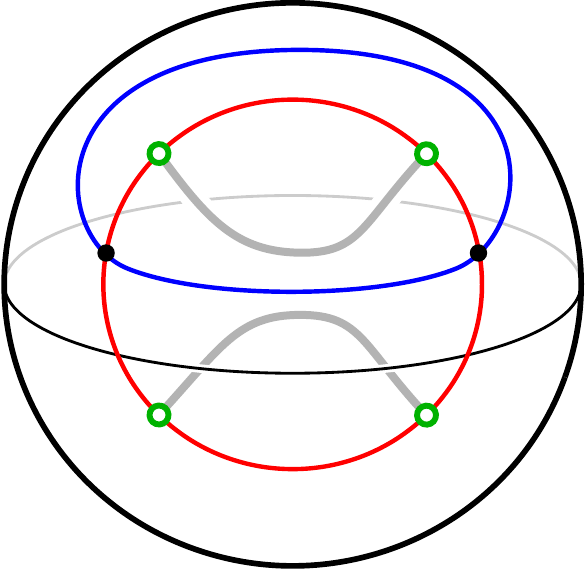}}\)
			\vspace{5pt}
			\\
			\(1\)
			&
			\begin{tikzcd}
			\bullet
			\arrow[bend left]{d}{\sigma_{123}}
			\\
			\circ
			\arrow[bend left]{u}{\sigma_{2}}
			\end{tikzcd}
			&
			\(
			\vc{%
				\begin{tikzpicture}[ppscale]
				
				\draw (-1,1)   node(lt){\textcolor{darkgreen}{\(c\)}};
				\draw (-1,-1)   node(lb){\textcolor{red}{			\(a\)}};
				\draw (1,1)   node(rt){\textcolor{blue}{			\(b\)}};
				\draw (1,-1)   node(rb){\textcolor{blue}{			\(b'\)}};
				
				\draw (-3,0)   node(LT){\textcolor{blue}{			\(B\)}};
				\draw (-3,-2)   node(LB){\textcolor{gold}{		\(D\)}};
				\draw (3,0)   node(RT){\textcolor{blue}{			\(B'\)}};
				\draw (3,-2)   node(RB){\textcolor{gold}{			\(D'\)}};
				
				\draw (RT) [g] to (rb);
				
				\draw[gb] (-1.25,-1.35) -- (-1.25,1.35) -- (1.25,1.35) -- (1.25,-1.35) -- cycle;
				
				\draw[gd] (-3.7,0.5) -- %
				(-2.75,0.5) -- (-2,-1.5) -- (2,-1.5) -- (2.75,0.5) -- %
				( 3.7,0.5) -- %
				(3.7,-2.5) -- %
				(-3.7,-2.5) -- %
				cycle;
				
				\draw (1,-1)   node(rb){\textcolor{blue}{			\(b'\)}};
				\draw (3,0)   node(RT){\textcolor{blue}{			\(B'\)}};
				
				\footnotesize
				
				\draw[->] (lt) to node[below]{\(q_{214}\)} (rt);
				\draw[->] (lb) to node[above]{\(q_2\)} (rb);
				
				\draw[->] (LB) to node[left]{\(q_{21}\)} (LT);
				\draw[->] (RB) to node[right]{\(q_{21}\)} (RT);

				\draw[->] (LT) [d] to node[above,pos=0.35] {\(p_2\)} (lb);
				\draw[->] (RT) [d] to node[above,pos=0.35]{\(1\)} (rb);
				
				\draw[->] (lt) [d,in=20,out=-135] to node[left,pos=0.1] {\(p_{123}\)} (LB);
				\draw[->] (rt) [d,in=160,out=-45] to node[right,pos=0.1] {\(p_{12}\)} (RB);
				
				\draw[->] (RB) [bend left=20] to node [below] {\(q_1\)} (lb);
				\end{tikzpicture}
			}
			\)
			&
			\(\vc{\includegraphics[width=\tpl]{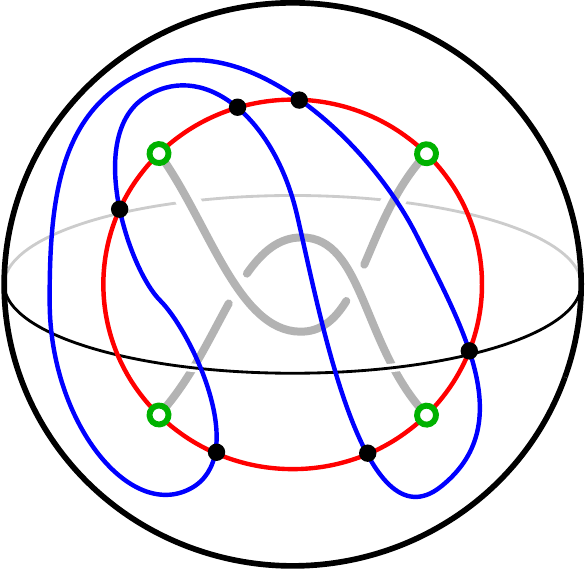}}\)
			\\
			\(-2\)
			&
			\begin{tikzcd}
			\bullet
			\arrow{d}{\sigma_{1}}
			\arrow[bend right]{dd}[swap]{\sigma_{3}}
			\\
			\circ
			\\
			\circ
			\arrow{u}[swap]{\sigma_{23}}
			\end{tikzcd}
			&
			\(
			\vc{%
				\begin{tikzpicture}[ppscale]
				\draw (-1,1)   node(lt){\textcolor{darkgreen}{\(c\)}};
				\draw (-1,-1)   node(lb){\textcolor{red}{			\(a\)}};
				\draw (1,1)   node(rt){\textcolor{blue}{			\(b\)}};
				\draw (1,-1)   node(rb){\textcolor{blue}{			\(b'\)}};
				
				\draw (-3,0)   node(LT){\textcolor{blue}{			\(B\)}};
				\draw (-3,-2)   node(LB){\textcolor{gold}{		\(D\)}};
				\draw (3,0)   node(RT){\textcolor{blue}{			\(B'\)}};
				\draw (3,-2)   node(RB){\textcolor{gold}{			\(D'\)}};
				
				\draw (-3,-4)   node(LTx){\textcolor{blue}{			\(B\)}};
				\draw (-3,-6)   node(LBx){\textcolor{gold}{		\(D\)}};
				\draw (3,-4)   node(RTx){\textcolor{blue}{			\(B'\)}};
				\draw (3,-6)   node(RBx){\textcolor{gold}{			\(D'\)}};
				
				\draw (rt) [g,bend left=25] to (RT);
				
				\draw[gb] (-1.25,-1.35) -- (-1.25,1.35) -- (1.25,1.35) -- (1.25,-1.35) -- cycle;
				
				\draw[gd] (-3.7,0.5) -- %
				(-2.75,0.5) -- (-2,-1.5) -- (2,-1.5) -- (2.75,0.5) -- %
				( 3.7,0.5) -- %
				(3.7,-2.5) -- %
				(-3.7,-2.5) -- %
				cycle;
				
				\draw[gd] (-3.7,-3.5) -- %
				( 3.7,-3.5) -- %
				(3.7,-6.5) -- %
				(-3.7,-6.5) -- %
				cycle;
				
				\draw (1,1)   node(rt){\textcolor{blue}{			\(b\)}};
				\draw (3,0)   node(RT){\textcolor{blue}{			\(B'\)}};
				
				\footnotesize
				\draw[->] (lt) [bend right=25] to node[above]{\(p_3\)} (LT);
				\draw[->] (rt) [bend left=25] to node[above]{\(1\)} (RT);
				
				\draw[->] (lt) to node[below]{\(q_{214}\)} (rt);
				\draw[->] (lb) to node[above]{\(q_2\)} (rb);
				
				\draw[->] (LB) to node[left]{\(q_{21}\)} (LT);
				\draw[->] (RB) to node[right]{\(q_{21}\)} (RT);
				
				\draw[->] (LBx) to node[left]{\(q_{21}\)} (LTx);
				\draw[->] (RBx) to node[right]{\(q_{21}\)} (RTx);
				
				\draw[->] (lb) [bend left =20  ] to node[right,pos=0.35]{\(p_1\)} (LBx);
				\draw[->] (rb) [bend right=20,d] to node[left ,pos=0.35]{\(p_{12}\)} (RBx);
				
				\draw[d,->] (LTx) to node[left]{\(p_{12}\)} (LB);
				\draw[d,->] (RTx) to node[right]{\(p_{12}\)} (RB);

				\draw[->] (lt) [out=-45,in=120] to node[pos=0.7,above]{~\(q_4\)} (RB);
				\end{tikzpicture}
			}
			\)
			&
			\(\vc{\includegraphics[width=\tpl]{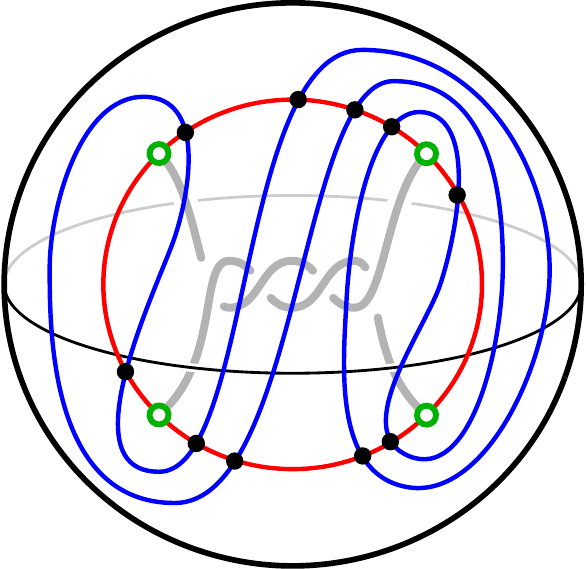}}\)
			\vspace{3pt}\\
			\bottomrule
		\end{tabular} 
		\caption{Test pairings to determine which pseudoholomorphic curves contribute to the differential in \(\typeA{\algT}{\BSAD(\mathcal{H}_Y)}^{\algS}\). 
			Specifically, the dashed arrows originate from the problematic differentials in \(\typeA{\algT}{\BSAD(\mathcal{H}_Y)}^{\algS}\). 
			Likewise, the solid arrows correspond to the unproblematic differentials. 
			The highlighted arrows labelled 1 can be cancelled using \cite[Lemma~1.22]{pqMod}.
			A comparison of the last two columns demonstrates that indeed, all dashed arrows have to be part of the differential of~\(\mathcal{S}^{\algS}\).
		}
		\label{tab:test-pairings}
	\end{figure}

\begin{proof}
	Consider the bordered sutured manifold \(Y\) shown in \cref{fig:doubling:Y}. 
	Gluing \(Y\) to the knot complement \(X_K\) shown in \cref{fig:doubling:X_K} produces the tangle complement \(X_{T_K}\) equipped with the bordered sutured structure described in \cref{sec:conventions} and shown in \cref{fig:doubling:T_K}.
	By the pairing theorem \cite[Theorem~12.3.2]{ZarevThesis}, 
	\[
	\BSD(X_{T_K})^{\algS}
	=
	\CFD(X_K)^{\algT}
	\boxtimes
	\typeA{\algT}{\BSAD(Y)}^{\algS}.
	\]
	(Here, \(\typeA{\algT}{\BSAD(Y)}^{\algS}\) denotes the direct summand of the type~AD structure \(\BSAD(Y)\) whose generators occupy exactly one of \(\am\) and \(\al\).)
	So it suffices to show that 
	\(
	\typeA{\algT}{\mathcal{Y}}^{\algS}
	\) 
	and 
	\(
	\typeA{\algT}{\BSAD(Y)}^{\algS}
	\)
	are graded chain homotopic. 
	In fact, we will show that
	\[
	\typeA{\algT}{\mathcal{Y}}^{\algS}
	=
	\typeA{\algT}{\BSAD(\mathcal{H}_Y)}^{\algS}
	\]
	for the Heegaard diagram \(\mathcal{H}_Y\) of \(Y\) shown in \cref{fig:doubling:HD}. 
	
	The type~AD structure \(\typeA{\algT}{\BSAD(\mathcal{H}_Y)}^{\algS}\) consists of eight generators in total. 
	We name the generators by the concatenation of the intersection points they are composed of. 
	They are shown as the vertices of the graph in \cref{fig:doubling:domains}. 
	The idempotents of \(\algT\) acting on the generators on the left can be read off the subscript of the first letter: 
	The four generators in middle two columns lie in the meridional idempotent \(\iota_\bullet\), the other four generators belong to the longitudinal idempotent \(\iota_\circ\). 
	The basic idempotents of \(\algS\) acting on the generators on the right can also be deduced from the name: A generator belongs to idempotent \(\iota_x\) for \(x\in\{\textcolor{red}{a},\textcolor{blue}{b},\textcolor{darkgreen}{c},\textcolor{gold}{d}\}\) if the letter \(x\) does not appear in the name.
	(This is also indicated by the colours.) 
	
	Next, we consider the regions in the Heegaard diagram. 
	As for the generators, we use suggestive notation and label them by algebra elements of \(\algS\) and \(\algT\), i.e.\ \(p_i\), \(q_j\), and \(\sigma_\ell\), or combinations thereof. 
	In the following, let us write domains \(D\) as formal differences \(D_+-D_-\) of unordered multisets of regions \(D_+\) and \(D_-\) with \(D_+\cap D_-=\varnothing\) such that 
	\[D=\sum_{r\in D_+}r-\sum_{r\in D_-}r.\]
	Let us calculate some connecting domains between the generators. 
	The solid arrows in \cref{fig:doubling:domains} indicate some \(n\)-gons with boundary punctures, which contribute to the type~AD structure. 
	Furthermore, elementary combinatorial arguments show that the group of periodic domains of \(\mathcal{H}_Y\) is freely generated by the domains
	\[
	B_1\coloneqq
	\{q_1,q_4\}-\{p_2,p_3\},
	\quad
	\text{and}
	\quad
	B_2\coloneqq
	\{\sigma_2,\sigma_3/p_1,p_2,q_1,q_2\}.
	\]
	\Cref{fig:doubling:domains} shows all connecting domains satisfying the following conditions: the multiplicity of each region is non-negative; the multiplicity of each region labelled by \(p_i\) and \(q_j\) is at most 1; the multiplicity 1 regions do not include \textit{both} those labelled by \(p_i\) and \(q_j\). 
	(This follows from routine calculations, most of which can be outsourced to a computer using the python script \cite{ancillary}.)
	These are the only domains that can possibly contribute to the differential in the type AD structure \(\typeA{\algT}{\BSAD(\mathcal{H}_Y)}^{\algS}\), but we do not know (yet) whether they do. 
	However, it turns out, this is all we need to compute from the Heegaard diagram.
	
	The domains on the three dotted arrows do not contribute to the differential \(\typeA{\algT}{\BSAD(\mathcal{H}_Y)}^{\algS}\), because the expected dimension of the corresponding moduli spaces is not 1; see \cref{tab:index}. 
	Comparing \cref{fig:doubling-bimodule,fig:doubling:domains}, it remains to see that all dashed arrows contribute to the differential. 
	We check this indirectly by pairing \(\typeA{\algT}{\BSAD(\mathcal{H}_Y)}^{\algS}\) with certain test type~D structures \(\mathcal{T}^{\algT}\) for which we already know the result 
	\[
	\mathcal{S}^{\algS}
	\coloneqq
	\mathcal{T}^{\algT}
	\boxtimes
	\typeA{\algT}{\BSAD(\mathcal{H}_Y)}^{\algS}
	\]
	up to homotopy.
	To construct such \(\mathcal{T}^{\algT}\), let \(X_K(i)\) be the knot complement equipped with the bordered structure given by the meridian \(\mu\) and the longitude \(\lambda+i\mu\), where \(\lambda\) is the homological longitude. 
	Then, setting \(K=U\) and \(\mathcal{T}=\mathcal{T}_{i}=\CFD(X_U(i))\), we know that \(\mathcal{S}^{\algS}\) is chain homotopic to \(\BSD(X_{Q_{-2i}})^{\algS}\), where \(Q_{-2i}\) is the rational tangle of slope \(-2i\). 
	\Cref{tab:test-pairings} compares the expected and the computed result for \(\mathcal{S}^{\algS}\) for some values of \(i\). 
	We conclude that all dashed arrows need to contribute, which concludes the identification of \(\typeA{\algT}{\BSAD(\mathcal{H}_Y)}^{\algS}\) with \(\BSADY\), up to gradings. 

	So it remains to consider gradings. 
	The unreduced grading of \(\typeA{\algT}{\BSAD(\mathcal{H}_Y)}^{\algS}\) takes values in \(\Q\rtimes H_1(\mathbf{Z}_\alpha,\mathbf{a}_\alpha;\Q)\times H_1(\mathbf{Z}_\beta,\mathbf{a}_\beta;\Q)\). 
	The unreduced grading \(\gr\) of the periodic domains \(B_1,B_2\in\pi_2(\textcolor{red}{d_\bullet c_1b_2},\textcolor{red}{d_\bullet c_1b_2})\) is equal to 
	\[
	\gr(B_1)=(0;0,0,0;0,-1,-1;0,1,1)
	\quad\text{and}\quad
	\gr(B_2)=(-\half;0,1,1;1,1,0;1,1,0)
	\]
	 and the unreduced grading of the domain \(B_3=\{\sigma_1,\sigma_2,p_2,p_3\}\in\pi_2(\textcolor{darkgreen}{d_\bullet a_1b_2}, \textcolor{red}{d_\bullet c_1b_2})\) is equal to
	\[
	\gr(B_3)=(0;1,1,0;0,1,1;0,0,0).
	\]
	Using the same grading reductions as in \cref{sec:conventions} (after multiplying the components of \( H_1(\mathbf{Z}_\beta,\mathbf{a}_\beta;\Q)\) by \(-1\)), the reduced gradings of these three domains is 
	\[
	\rgr(B_1)=(0;0,0;0,2,2,0)
	\quad\text{and}\quad
	\rgr(B_2)=(-\half;0,1;0,0,2,2)
	\]
	and
	\begin{align*}
	\rgr(B_3)
	&=
	(r(\iota_c))^{-1}R(\gr(B_3))
	\\
	&=
	(\half;1,1,0;0,-\half,-\half;0,\half,\half)
	=
	(\half;1,0;0,-1,-1,0);
	\end{align*}
	compare \cite[Formulas~3.44 and~10.36]{LOT} and \eqref{req:grading_reduction}. 
	Let \(\PIII=\langle\rgr(B_1),\rgr(B_2)\rangle\) and choose \(\textcolor{red}{a}\) as the base generator, i.e.\ \(\rgr(\textcolor{red}{a})=P_3\). 
	Then, since \(-B_3\in\pi_2(\textcolor{darkgreen}{c},\textcolor{red}{a})\), 
	\begin{align*} 
	\rgr(\textcolor{darkgreen}{c})
	&=
	\rgr(-B_3)\cdot\PIII
	\\
	&=
	(-\half;-1,0;0,1,1,0)\cdot\PIII
	=
	(-\half;-1,0;0,0,0,0)\cdot\PIII.
	\qedhere
	\end{align*}
\end{proof}

\section{Proof of the Main Theorem}\label{sec:proof:main_thm}

We now prove the following graded version of the \ref{thm:main:intro}.

\begin{theorem}\label{thm:main:graded}
	The components of \(\gamma_{T_K}\) are in one-to-one correspondence with the curve segments in \(\Sset(\gamma_K)\) (\cref{def:local_systems}).
	More specifically, for any \(r,a\in\hZ\) and \(\ell\in\Z^{>0}\), we have the following correspondence:
	\[
	\text{in~}\gamma_K~
	\left\{
	\begin{array}{ccc}
	\ccon_{2\tau(K)}
	&\longleftrightarrow&
	\rr_{4\tau(K)}
	\\
	\delta^r t^a\,\capR_{\ell} 
	&\longleftrightarrow&
	\delta^{r+a} t_1^{2a}t_2^{2a}\,\s_{2\ell}
	\\
	\delta^r t^{-a}\,\capL_{\ell}
	&\longleftrightarrow&
	\delta^{r+a} t_1^{-2a}t_2^{-2a}\,\sS_{2\ell}
	\end{array}
	\right\}
	~\text{in~}\gamma_{T_K}
	\]
	where the gradings on the curves/curve segments is as in \cref{fig:HFKgradings,fig:HFTgradings}. 
\end{theorem}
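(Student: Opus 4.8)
The plan is to combine the pairing formula of \cref{thm:BS_computations} with the loop-type structure of the knot invariant. First I would replace $\CFD(X_K)^{\algT}$ by a loop-type representative, which is legitimate by \cref{thm:classification:HF}: in this form it is literally the chain complex over $\algT$ built from the multicurve $\gamma_K$ by the construction of \cref{sec:conventions}, so its generators in idempotent $\iota_\bullet$ are the points of $\gamma_K\pitchfork\mu_K$, each lying on a closed loop with one incoming and one outgoing half-edge, and the maximal $\iota_\circ$-chains running between consecutive $\iota_\bullet$-vertices are exactly the curve segments $\ccon_{2\tau(K)}$, $\capR_\ell$, $\capL_\ell$ of \cref{def:local_systems,thm:structure:HFKcurve}. (If a component of $\gamma_K$ carries a nontrivial local system $X$, I expect its contribution below to be simply $\dim X$ copies of the trivial-local-system contribution, the monodromy being absorbed by the cancellations at the $\iota_\bullet$-vertices; this is consistent with the multiplicities built into \cref{def:local_systems}.) By \cref{thm:BS_computations} it then suffices to compute $\CFD(X_K)^{\algT}\boxtimes\BSADY$ up to graded homotopy equivalence and to recognise the result via \cref{thm:classification:HFT}.

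The main step is to show that $-\boxtimes\BSADY$ decouples along the curve segments. Concretely, I would perform the box tensor product and then cancel all differentials labelled by the identity of $\algS$ — precisely the cancellation move \cite[Lemma~1.22]{pqMod} that is already used in \cref{tab:test-pairings}. For a fixed $\iota_\bullet$-vertex $x$, tensoring produces the four generators $x\otimes a$, $x\otimes b$, $x\otimes b'$, $x\otimes c$, whose differentials are governed by the part of $\BSADY$ supported on $\{a,b,b',c\}$ together with the two $\algT$-labelled half-edges of $\CFD(X_K)^{\algT}$ incident to $x$, while the $\iota_\circ$-chains flanking $x$ contribute generators tensored with $\{B,D,B',D'\}$. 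The claim is that after the cancellations the complex separates at every such $x$, so that, up to homotopy, $\CFD(X_K)^{\algT}\boxtimes\BSADY$ is the direct sum over the curve segments $D\in\Sset(\gamma_K)$ of the closed type~D structures $D\boxtimes\BSADY$ (closed because $\BSADY$ caps off the two open ends of $D$). This is the algebraic shadow of ``cutting $\gamma_K$ along $\mu_K$'', and it is the Heegaard Floer analogue of the cap-trivial tangle argument behind \cite[Theorem~3.1]{KWZ_strong_inversions}. Verifying this decoupling — and tracking which generators survive the cancellations — is where I expect the real work to lie.

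It then remains to identify the summands. For the segment $\ccon_{2\tau(K)}$ one obtains a simple closed curve lifting to a straight line of even integer slope; counting intersections with the preimages of the two arc systems on $S^2_4$ fixes that slope at $4\tau(K)$, so by \cref{thm:classification:HFT} the summand is $\rr_{4\tau(K)}$. For a segment $\capR_\ell$ (resp.\ $\capL_\ell$) one obtains a primitive immersed curve of slope $\infty$ through the punctures $\I,\II$ (resp.\ $\III,\IV$) with $\ell$ marked intervals, i.e.\ $\s_{2\ell}$ (resp.\ $\sS_{2\ell}$), again pinned down by \cref{thm:classification:HFT}; the computations in \cref{tab:test-pairings} and \cref{exa:main:intro} serve as consistency checks, and all local systems that occur are trivial, as already required by \cref{thm:structure:HFTcurve}.

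For the grading statement I would track the reduced gradings through the box tensor product. The reduced grading of $\CFD(X_K)^{\algT}$ on its $\iota_\bullet$-generators is the one recorded in \cref{sec:conventions} via $\rGr(\Za)\cong\Q\rtimes\Q^2$ and the identity relating it to $(\delta,A)$, while \cref{thm:BS_computations} supplies the reduced grading of $\BSADY$ modulo $\PIII$ together with $\rgr(\arcC)=(-\half;-1,0;0,0,0,0)\cdot\PIII$ when $\rgr(\arcA)=\PIII$. Combining these, the $\delta$- and Alexander-grading shifts of the component produced by a curve segment are determined by the grading of the $\iota_\bullet$-generator at which that segment starts — which encodes how the segment sits relative to $\mu_K$ and $\lambda_K$ — together with the fixed shifts carried by $\PIII$. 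A finite computation should then reproduce the shifts $\delta^{r+a}t_1^{2a}t_2^{2a}$ on $\s_{2\ell}$ and $\delta^{r+a}t_1^{-2a}t_2^{-2a}$ on $\sS_{2\ell}$ in the statement, to be checked against \cref{fig:HFKgradings,fig:HFTgradings}.
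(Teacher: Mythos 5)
Your proposal follows essentially the same route as the paper: compute $-\boxtimes\BSADY$ locally on each curve segment of a loop-type representative, cancel the identity-labelled components (the paper's Figure~\ref{fig:pp-pairing} is exactly this computation, with the decoupling justified by observing that the leftover generators at matching puzzle-piece ends agree), dispose of nontrivial local systems via the $XX^{-1}=\id$ observation, and track the reduced gradings through the double coset $\PI\backslash(\rGr(\Za)\times\mathfrak{A})/\PIII$. The steps you flag as "where the real work lies" are precisely the explicit computations the paper carries out in Figure~\ref{fig:pp-pairing} and in the grading calculation at the end of Section~\ref{sec:proof:main_thm}.
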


\begin{example}\label{exa:main:graded}
	For the right-handed trefoil knot \(K=T_{2,3}=3_1\), 
	\[
	\Sset(\gamma_K)=
	\{
	\ccon_{2},
	\delta^{\frac{3}{2}}t^{+\frac{1}{2}}\,\capR_{1},
	\delta^{\frac{3}{2}}t^{-\frac{1}{2}}\,\capL_{1}
	\};
	\]
	see for example \cite{hanselman_program}. 
	Hence
	\[
	\gamma_{T_K}
	=~
	\rr_4
	~\cup~
	\delta^{2}t_1^{+1}t_2^{+1}\,\s_{2}
	~\cup~
	\delta^{2}t_1^{-1}t_2^{-1}\,\sS_{2}.
	\]
	As one might expect \cite[Example~7.15]{KWZ_thin}, \(T_K\) is an example of a Heegaard Floer exceptional tangle, since \(\delta(\s,\rr_4)=2\) for both special components~\(\s\) of~\(\gamma_{T_K}\).
\end{example}
\begin{example}\label{exa:4_1}
	For the figure-eight knot \(K=4_1\), 
	\begin{align*}
	\Sset(\gamma_K)
	&=
	\{
	\ccon_{0},
	\delta^{\frac{1}{2}}t^{+\frac{1}{2}}\,\capR_{1},
	\delta^{\frac{1}{2}}t^{-\frac{1}{2}}\,\capR_{1},
	\delta^{\frac{1}{2}}t^{-\frac{1}{2}}\,\capL_{1},
	\delta^{\frac{1}{2}}t^{+\frac{1}{2}}\,\capL_{1}
	\},
	\\
	\gamma_{T_K}
	&=~
	\rr_0
	~\cup~
	\delta^{1}t_1^{+1}t_2^{+1}\,\s_{2}
	~\cup~
	\delta^{0}t_1^{-1}t_2^{-1}\,\s_{2}
	\\
	&
	\phantom{=~
	\rr_0 }
	~~\cup~
	\delta^{1}t_1^{-1}t_2^{-1}\,\sS_{2}
	~\cup~
	\delta^{0}t_1^{+1}t_2^{+1}\,\sS_{2}.
	\end{align*}
\end{example}
\begin{example}\label{exa:8_19}
	For the \((3,4)\)-torus knot \(K=T_{3,4}=8_{19}\), 
	\begin{align*}
	\Sset(\gamma_K)
	&=
	\{
	\ccon_{6},
	\delta^{\frac{7}{2}}t^{+\frac{5}{2}}\,\capR_{1},
	\delta^{3}t^{-1}\,\capR_{2},
	\delta^{\frac{7}{2}}t^{-\frac{5}{2}}\,\capL_{1},
	\delta^{3}t^{+1}\,\capL_{2}
	\},
	\\
	\gamma_{T_K}
	&=~
	\rr_{12}
	~\cup~
	\delta^{6}t_1^{+5}t_2^{+5}\,\s_{2}
	~\cup~
	\delta^{2}t_1^{-2}t_2^{-2}\,\s_{4}
	\\
	&
	\phantom{=~
		\rr_{12} }
	~~\cup~
	\delta^{6}t_1^{-5}t_2^{-5}\,\sS_{2}
	~\cup~
	\delta^{2}t_1^{+2}t_2^{+2}\,\sS_{4}.
	\end{align*}
\end{example}

\begin{remark}\label{rem:main:absolute_grading}
	The curve segments \(\ccon_{k}\) in \cref{fig:HFKgradings} are graded such that for any knot~\(K\),  \(\ccon_{2\tau(K)}\) is equal to an element of \(\Sset(\gamma_K)\) without any additional grading shift. 
	This is ensured by the symmetry with respect to the Alexander grading and the convention that the Maslov grading of the right generators \(\bullet\) in \cref{fig:HFKgradings:c-,fig:HFKgradings:c0,fig:HFKgradings:c+} vanishes. 
	(These are the generators that are mapped to the generators of \(\HFhat(S^3)\) under the spectral sequence corresponding to setting \(U=0\) and \(V=1\).)
\end{remark}
\begin{remark}
	The graded version of conjugation symmetry for the multicurve \(\gamma_K\) (\cref{thm:structure:HFKcurve}) corresponds precisely to the graded version of conjugation symmetry for the multicurve \(\gamma_{T_K}\) (\cref{thm:structure:HFTcurve}). 
	Indeed, the former says that for any \(\ell\in\Z^{>0}\) and \(r,a\in\hZ\), 
	\[
	\#\{\text{curve segments \(\delta^r t^{ a}\capR_{\ell}\) in \(\Sset(\gamma_K)\)}\}
	=
	\#\{\text{curve segments \(\delta^r t^{-a}\capL_{\ell}\) in \(\Sset(\gamma_K)\)}\}
	\]
	and the latter that for any \(\ell\in\Z^{>0}\) and \(r,a_1,a_2\in\hZ\), 
	\[
	\#\{\text{components \(\delta^{r} t_1^{a_1}t_2^{a_2}\,\s_{2\ell}\) of \(\gamma_{T_K}\)}\}
	=
	\#\{\text{components \(\delta^{r} t_1^{-a_1}t_2^{-a_2}\,\sS_{2\ell}\) of \(\gamma_{T_K}\)}\}.
	\]
\end{remark}

\newcommand{\ppc}{%
	(-0.5,1) -- 
	(-0.5,0.5) .. controls +(0,-0.5) and +(-0.2,0) ..%
	(0,0.3) .. controls +(0.1,0) and +(0,0.25) ..%
	(0.25,0) .. controls +(0,-0.25) and +(0.1,0) ..%
	(0,-0.3) .. controls +(-0.2,0) and +(0,0.5) ..%
	(-0.5,-0.5) -- (-0.5,-1)
}
\newcommand{\ppfl}{%
	\draw [line width=1pt,xshift=-3cm]%
	(3,1) -- \ppc -- (3,-1);%
}
\newcommand{\ppfr}{%
	\draw [line width=1pt,xshift=3cm,rotate=180]%
	(3,1) -- \ppc -- (3,-1);%
}
\newcommand{\ppmr}{%
	\draw [line width=1pt,xshift=3cm]%
	(-3,1) -- \ppc -- (-3,-1);%
}
\newcommand{\ppml}{%
	\draw [line width=1pt,xshift=-3cm,rotate=180]%
	(-3,1) -- \ppc -- (-3,-1);%
}

\begin{figure}[t]
	\centering
	\begin{subfigure}{0.2\textwidth}
		\centering
		\begin{tikzpicture}[scale=0.5]
		\footnotesize
		\draw (0,-4) node(L){\(\bullet\)};
		\draw (-0.5,-4.75) node [right]{\(\HFKgr{\tfrac{\ell}{2}-1}{-\tfrac{\ell}{2}}\)};
		\draw (2,-2) node(l){\(\circ\)};
		\draw (2, 0) node(c){};
		\draw (2, 2) node(r){\(\circ\)};
		\draw (0, 4) node(R){\(\bullet\)};
		\draw (-0.5, 4.75) node [right]{\(\HFKgr{-\tfrac{\ell}{2}}{\tfrac{\ell}{2}}\)};
		\draw[->,in=-90,out=0] (L) to node [above left] {\(\sigma_{3}\)} (l);
		\draw[->,d] (l) to node [left] {\(\sigma_{23}\)} (c);
		\draw[->,d] (c) to node [left] {\(\sigma_{23}\)} (r);
		\draw[->,in=0,out=90] (r) to node [below left] {\(\sigma_2\)} (R);
		\draw [very thick,decorate,decoration={calligraphic brace,amplitude=10pt},yshift=0pt]
		(2.5,2.3) -- (2.5,-2.3) node [black,midway,xshift=0.55cm] {\(\ell\)};
		\draw (-0.5,3.5) node [right]{\(x_u\)};
		\draw (-0.5,-3.5) node [right]{\(y_u\)};
		\end{tikzpicture}
		\caption{\(\capR_{\ell}\)}
		\label{fig:HFKgradings:u}
	\end{subfigure}%
	\begin{subfigure}{0.2\textwidth}
		\centering
		\begin{tikzpicture}[scale=0.5]
		\footnotesize
		\draw ( 0,-4) node(L){\(\bullet\)};
		\draw (0.5,-4.75) node [left,align=right]{%
			\(\HFKgr{-\tfrac{\ell}{2}}{-\tfrac{\ell}{2}}\)};
		\draw (-2,-2) node(l){\(\circ\)};
		\draw (-2, 0) node(c){};
		\draw (-2, 2) node(r){\(\circ\)};
		\draw ( 0, 4) node(R){\(\bullet\)};
		\draw (0.5,4.75) node [left,align=right]{%
			\(\HFKgr{\tfrac{\ell}{2}-1}{\tfrac{\ell}{2}}\)};
		\draw[->,in=-90,out=180] (L) to node [above right,pos=0.6] {\(\sigma_{123}\)} (l);
		\draw[->,d] (l) to node [right] {\(\sigma_{23}\)} (c);
		\draw[->,d] (c) to node [right] {\(\sigma_{23}\)} (r);
		\draw[<-,in=180,out=90] (r) to node [below right] {\(\sigma_1\)} (R);
		\draw [very thick,decorate,decoration={calligraphic brace,amplitude=10pt},yshift=0pt]
		(-2.5,-2.3) -- (-2.5,2.3) node [black,midway,xshift=-0.55cm] {\(\ell\)};
		\draw (-0.5,3.5) node [right]{\(x_v\)};
		\draw (-0.5,-3.5) node [right]{\(y_v\)};
		\end{tikzpicture}
		\caption{\(\capL_{\ell}\)}
		\label{fig:HFKgradings:v}
	\end{subfigure}%
	\begin{subfigure}{0.2\textwidth}
		\centering
		\begin{tikzpicture}[scale=0.5]
		\footnotesize
		\draw (0,-4) node(L){\(\bullet\)};
		\draw (0.5,-4.75) node [left]{\(\HFKgr{-\tfrac{\ell}{2}}{-\tfrac{\ell}{2}}\)};
		\draw (-2,-2) node(l){\(\circ\)};
		\draw (-2, 0) node(c){};
		\draw (-2, 2) node(r){\(\circ\)};
		\draw (-4, 4) node(R){\(\bullet\)};
		\draw (-4.5, 4.75) node [right]{\(\HFKgr{-\tfrac{\ell}{2}}{\tfrac{\ell}{2}}\)};
		\draw[->,in=-90,out=180] (L) to node [above right] {\(\sigma_{123}\)} (l);
		\draw[->,d] (l) to node [right] {\(\sigma_{23}\)} (c);
		\draw[->,d] (c) to node [right] {\(\sigma_{23}\)} (r);
		\draw[->,in=0,out=90] (r) to node [below left] {\(\sigma_2\)} (R);
		\draw [very thick,decorate,decoration={calligraphic brace,amplitude=10pt},yshift=0pt]
		(-2.5,-2.3) -- (-2.5,2.3) node [black,midway,xshift=-0.55cm] {\(\ell\)};
		\end{tikzpicture}
		\caption{\(\ccon_{-\ell}\)}
		\label{fig:HFKgradings:c-}
	\end{subfigure}%
	\begin{subfigure}{0.2\textwidth}
		\centering
		\begin{tikzpicture}[scale=0.5]
		\footnotesize
		\draw (-4,0) node(L){\(\bullet\)};
		\draw (-4.5,-0.75) node [right]{\(\HFKgr{0}{0}\)};
		\draw ( 0, 0) node(R){\(\bullet\)};
		\draw ( 0.5, 0.75) node [left]{\(\HFKgr{0}{0}\)};
		\draw[->] (R) to node [above] {\(\sigma_{12}\)} (L);
		\draw (-4.5,-4.75) node [right]{\phantom{\((0;0,\tfrac{\ell}{2})\)}};
		\draw (-4.5, 4.75) node [right]{\phantom{\((0;0,\tfrac{\ell}{2})\)}};
		\end{tikzpicture}
		\caption{\(\ccon_{0}\)}
		\label{fig:HFKgradings:c0}
	\end{subfigure}%
	\begin{subfigure}{0.2\textwidth}
		\centering
		\begin{tikzpicture}[scale=0.5]
		\footnotesize
		\draw (-4,-4) node(L){\(\bullet\)};
		\draw (-4.5,-4.75) node [right]{\(\HFKgr{\tfrac{\ell}{2}}{-\tfrac{\ell}{2}}\)};
		\draw (-2,-2) node(l){\(\circ\)};
		\draw (-2, 0) node(c){};
		\draw (-2, 2) node(r){\(\circ\)};
		\draw ( 0, 4) node(R){\(\bullet\)};
		\draw ( 0.5, 4.75) node [left]{\(\HFKgr{\tfrac{\ell}{2}}{\tfrac{\ell}{2}}\)};
		\draw[->,in=-90,out=0] (L) to node [above left] {\(\sigma_{3}\)} (l);
		\draw[->,d] (l) to node [right] {\(\sigma_{23}\)} (c);
		\draw[->,d] (c) to node [right] {\(\sigma_{23}\)} (r);
		\draw[<-,in=180,out=90] (r) to node [below right] {\(\sigma_1\)} (R);
		\draw [very thick,decorate,decoration={calligraphic brace,amplitude=10pt},yshift=0pt]
		(-2.5,-2.3) -- (-2.5,2.3) node [black,midway,xshift=-0.55cm] {\(\ell\)};
		\end{tikzpicture}
		\caption{\(\ccon_{\ell}\)}
		\label{fig:HFKgradings:c+}
	\end{subfigure}
	\caption{The graded chain complexes corresponding to the curve segments that can occur in \(\Sset(\gamma_K)\). Note \(\ell\in\Z^{>0}\).}
	\label{fig:HFKgradings}
\end{figure}
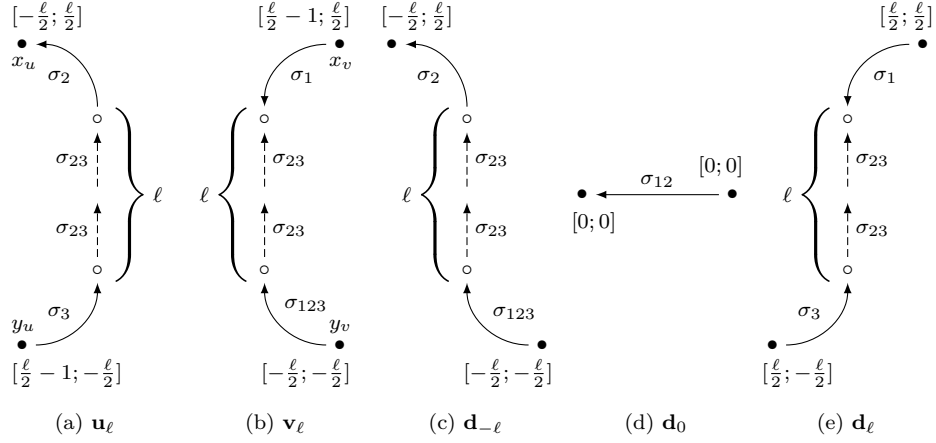
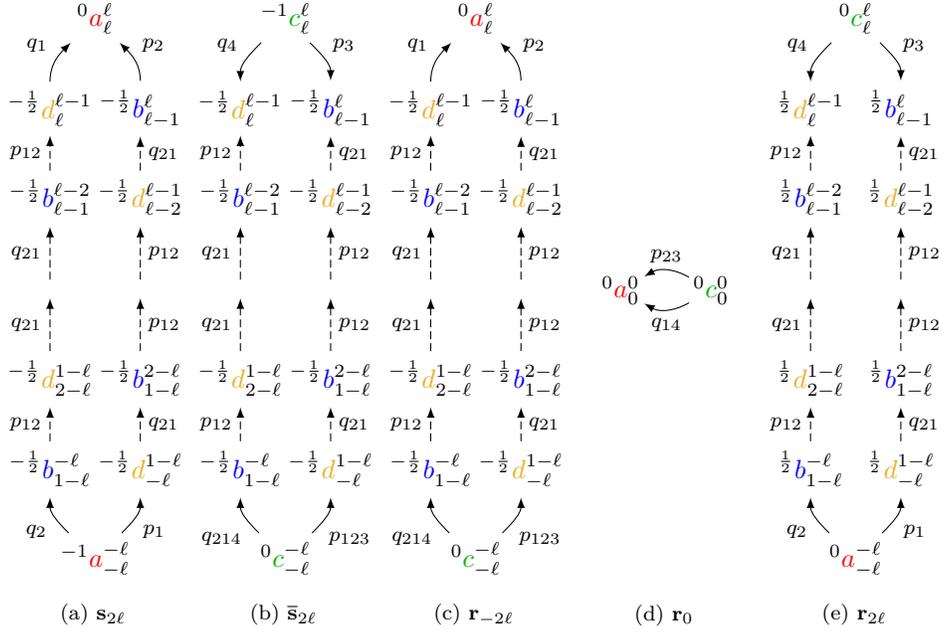
\begin{figure}[t]
	\centering
	\newcommand{\allthesame}{%
		\draw (-1, 4) node(tt1){\(\HFTgr{-\hhalf}{\ell-1}{\ell}{\arcD}\)};
		\draw (-1, 2) node(t1) {\(\HFTgr{-\hhalf}{\ell-2}{\ell-1}{\arcB}\)};
		\draw (-1, 0) node(c1) {};
		\draw (-1,-2) node(b1) {\(\HFTgr{-\hhalf}{1-\ell}{2-\ell}{\arcD}\)};
		\draw (-1,-4) node(bb1){\(\HFTgr{-\hhalf}{-\ell}{1-\ell}{\arcB}\)};
		
		\draw (1, 4) node(tt2){\(\HFTgr{-\hhalf}{\ell}{\ell-1}{\arcB}\)};
		\draw (1, 2) node(t2) {\(\HFTgr{-\hhalf}{\ell-1}{\ell-2}{\arcD}\)};
		\draw (1, 0) node(c2) {};
		\draw (1,-2) node(b2) {\(\HFTgr{-\hhalf}{2-\ell}{1-\ell}{\arcB}\)};
		\draw (1,-4) node(bb2){\(\HFTgr{-\hhalf}{1-\ell}{-\ell}{\arcD}\)};
		\footnotesize
		\draw[->,d] (t1)  to node [left] {\(p_{12}\)} (tt1);
		\draw[->,d] (t2)  to node [right] {\(q_{21}\)} (tt2);
		\draw[->,d] (c1)  to node [left] {\(q_{21}\)} (t1);
		\draw[->,d] (c2)  to node [right] {\(p_{12}\)} (t2);
		\draw[->,d] (b1)  to node [left] {\(q_{21}\)} (c1);
		\draw[->,d] (b2)  to node [right] {\(p_{12}\)} (c2);
		\draw[->,d] (bb1) to node [left] {\(p_{12}\)} (b1);
		\draw[->,d] (bb2) to node [right] {\(q_{21}\)} (b2);		
	}
	
	\begin{subfigure}{0.2\textwidth}
		\centering
		\begin{tikzpicture}[scale=0.6]
		\draw (0, 6) node(T){\(\HFTgr{0}{\ell}{\ell}{\arcA}\)};		
		\draw (0,-6) node(B){\(\HFTgr{-1}{-\ell}{-\ell}{\arcA}\)};
		\allthesame
		\draw[->,out=90,in=-135] (tt1) to node [above left] {\(q_{1}\)} (T);
		\draw[->,out=90,in= -45] (tt2) to node [above right] {\(p_{2}\)} (T);
		\draw[->,out=135,in=-90] (B) to node [below left] {\(q_{2}\)} (bb1);
		\draw[->,out= 45,in=-90] (B) to node [below right] {\(p_{1}\)} (bb2);
		\end{tikzpicture}
		\caption{\(\s_{2\ell}\)}
		\label{fig:HFTgradings:s12}
	\end{subfigure}%
	\begin{subfigure}{0.2\textwidth}
		\centering
		\begin{tikzpicture}[scale=0.6]
		\draw (0, 6) node(T){\(\HFTgr{-1}{\ell}{\ell}{\arcC}\)};
		\draw (0,-6) node(B){\(\HFTgr{0}{-\ell}{-\ell}{\arcC}\)};
		\allthesame
		\draw[<-,out=90,in=-135] (tt1) to node [above left] {\(q_{4}\)} (T);
		\draw[<-,out=90,in= -45] (tt2) to node [above right] {\(p_{3}\)} (T);
		\draw[->,out=135,in=-90] (B) to node [pos=0.3,below left] {\(q_{214}\)} (bb1);
		\draw[->,out= 45,in=-90] (B) to node [pos=0.3,below right] {\(p_{123}\)} (bb2);
		\end{tikzpicture}
		\caption{\(\sS_{2\ell}\)}
		\label{fig:HFTgradings:s34}
	\end{subfigure}%
	\begin{subfigure}{0.2\textwidth}
		\centering
		\begin{tikzpicture}[scale=0.6]
		\draw (0, 6) node(T){\(\HFTgr{0}{\ell}{\ell}{\arcA}\)};
		\draw (0,-6) node(B){\(\HFTgr{0}{-\ell}{-\ell}{\arcC}\)};
		\allthesame
		\draw[->,out=90,in=-135] (tt1) to node [above left] {\(q_{1}\)} (T);
		\draw[->,out=90,in= -45] (tt2) to node [above right] {\(p_{2}\)} (T);
		\draw[->,out=135,in=-90] (B) to node [pos=0.3,below left] {\(q_{214}\)} (bb1);
		\draw[->,out= 45,in=-90] (B) to node [pos=0.3,below right] {\(p_{123}\)} (bb2);
		\end{tikzpicture}
		\caption{\(\rr_{-2\ell}\)}
		\label{fig:HFTgradings:r-}
	\end{subfigure}%
	\begin{subfigure}{0.2\textwidth}
		\centering
		\begin{tikzpicture}[scale=0.6]
		\draw (0, 6) node(T){\phantom{\(\HFTgr{\hhalf}{\ell}{\ell}{\arcA}\)}};
		\draw (0, -6) node(T){\phantom{\(\HFTgr{\hhalf}{\ell}{\ell}{\arcA}\)}};
		\draw (-1, 0) node[t](L){\(\HFTgr{0}{0}{0}{\arcA}\)};
		\draw (1,0) node[t](R){\(\HFTgr{0}{0}{0}{\arcC}\)};
		\footnotesize
		\draw[->,out=150,in=30] (R) to node [above] {\(p_{23}\)} (L);
		\draw[->,out=-150,in=-30] (R) to node [below] {\(q_{14}\)} (L);
		\end{tikzpicture}
		\caption{\(\rr_0\)}
		\label{fig:HFTgradings:r0}
	\end{subfigure}%
	\begin{subfigure}{0.2\textwidth}
		\centering
		\begin{tikzpicture}[scale=0.6]
		\draw (0, 6) node(T){\(\HFTgr{0}{\ell}{\ell}{\arcC}\)};		
		\draw (0,-6) node(B){\(\HFTgr{0}{-\ell}{-\ell}{\arcA}\)};
		
		\draw (-1, 4) node(tt1){\(\HFTgr{\hhalf}{\ell-1}{\ell}{\arcD}\)};
		\draw (-1, 2) node(t1) {\(\HFTgr{\hhalf}{\ell-2}{\ell-1}{\arcB}\)};
		\draw (-1, 0) node(c1) {};
		\draw (-1,-2) node(b1) {\(\HFTgr{\hhalf}{1-\ell}{2-\ell}{\arcD}\)};
		\draw (-1,-4) node(bb1){\(\HFTgr{\hhalf}{-\ell}{1-\ell}{\arcB}\)};
		
		\draw (1, 4) node(tt2){\(\HFTgr{\hhalf}{\ell}{\ell-1}{\arcB}\)};
		\draw (1, 2) node(t2) {\(\HFTgr{\hhalf}{\ell-1}{\ell-2}{\arcD}\)};
		\draw (1, 0) node(c2) {};
		\draw (1,-2) node(b2) {\(\HFTgr{\hhalf}{2-\ell}{1-\ell}{\arcB}\)};
		\draw (1,-4) node(bb2){\(\HFTgr{\hhalf}{1-\ell}{-\ell}{\arcD}\)};
		\footnotesize
		\draw[->,d] (t1)  to node [left] {\(p_{12}\)} (tt1);
		\draw[->,d] (t2)  to node [right] {\(q_{21}\)} (tt2);
		\draw[->,d] (c1)  to node [left] {\(q_{21}\)} (t1);
		\draw[->,d] (c2)  to node [right] {\(p_{12}\)} (t2);
		\draw[->,d] (b1)  to node [left] {\(q_{21}\)} (c1);
		\draw[->,d] (b2)  to node [right] {\(p_{12}\)} (c2);
		\draw[->,d] (bb1) to node [left] {\(p_{12}\)} (b1);
		\draw[->,d] (bb2) to node [right] {\(q_{21}\)} (b2);	
		
		\draw[<-,out=90,in=-135] (tt1) to node [above left] {\(q_{4}\)} (T);
		\draw[<-,out=90,in= -45] (tt2) to node [above right] {\(p_{3}\)} (T);
		\draw[->,out=135,in=-90] (B) to node [below left] {\(q_{2}\)} (bb1);
		\draw[->,out= 45,in=-90] (B) to node [below right] {\(p_{1}\)} (bb2);
		\end{tikzpicture}
		\caption{\(\rr_{2\ell}\)}
		\label{fig:HFTgradings:r+}
	\end{subfigure}
	\caption{The graded chain complexes corresponding to curves that can appear as components of \(\gamma_{T_K}\). Note \(\ell\in\Z^{>0}\).}
	\label{fig:HFTgradings}
\end{figure}

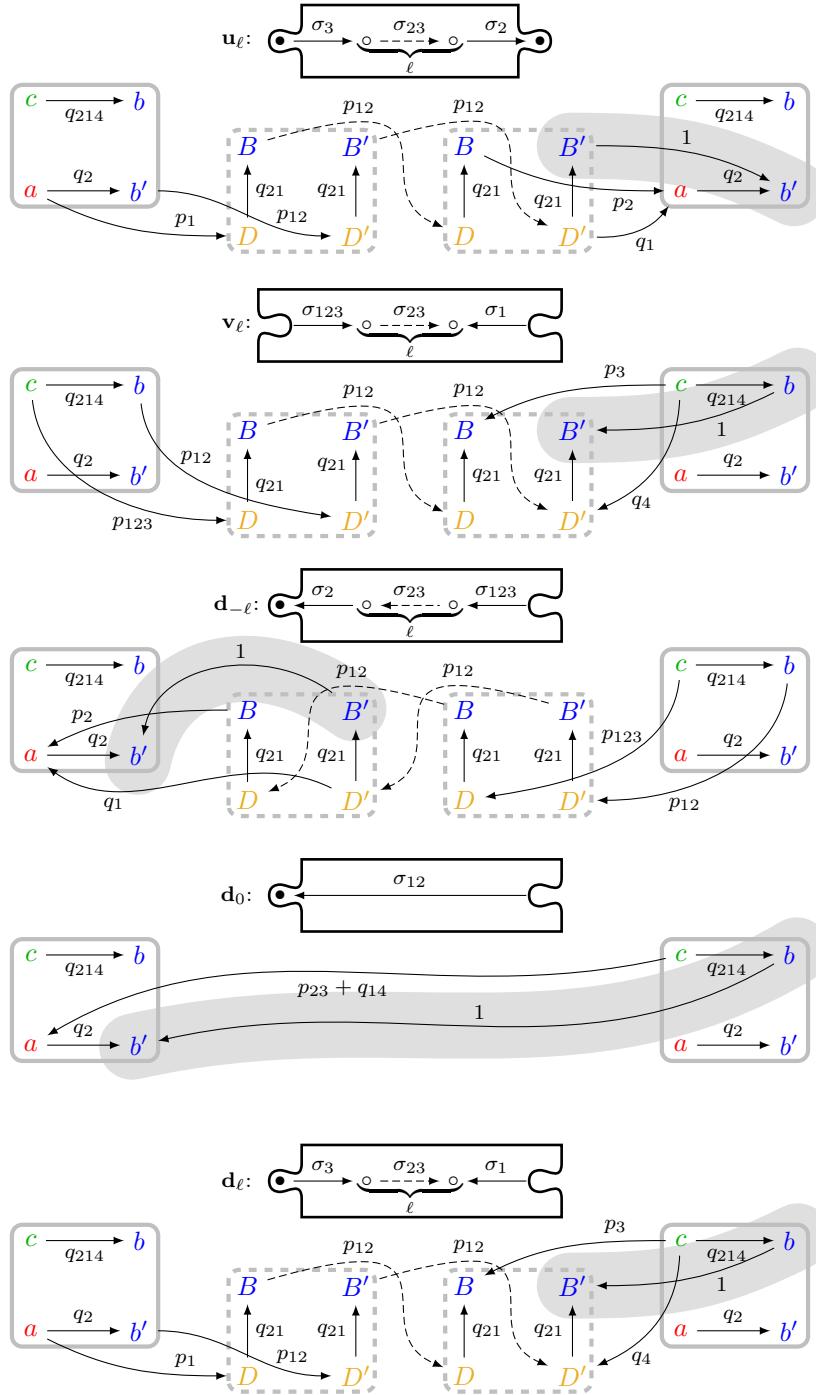
\begin{figure}[p]
	\centering
	\newcommand{\mybox}{(-1.35,-1.35) -- (-1.35,1.35) -- (1.35,1.35) -- (1.35,-1.35) -- cycle}
	\newcommand{\myvspace}{\vspace{10pt}}
	\newcommand{\myvvspace}{2.9cm}
		\begin{tikzpicture}[ppscale]
		\begin{scope}[scale=0.8,yshift=\myvvspace]
			\footnotesize
			\ppml\ppmr
			\draw (-4,0) node(W){\(\capR_{\ell}\):};
			\draw (-3,0) node(L){\(\bullet\)};
			\draw (-1,0) node(l){\(\circ\)};
			\draw (1,0) node(r){\(\circ\)};
			\draw (3,0) node(R){\(\bullet\)};
			\draw[->] (L) to node [above] {\(\sigma_{3}\)} (l);
			\draw[->,d] (l) to node [above] {\(\sigma_{23}\)} (r);
			\draw[->] (r) to node [above] {\(\sigma_2\)} (R);
			\draw (0,-0.5) node {\(\underbrace{\hspace{1.4cm}}_{\ell}\)};
		\end{scope}
		
		\draw (-7,1)   node(Llt){\textcolor{darkgreen}{\(c\)}};
		\draw (-7,-1)   node(Llb){\textcolor{red}{			\(a\)}};
		\draw (-5,1)   node(Lrt){\textcolor{blue}{			\(b\)}};
		\draw (-5,-1)   node(Lrb){\textcolor{blue}{			\(b'\)}};
		
		\draw (-3,0)   node(llt){\textcolor{blue}{			\(B\)}};
		\draw (-3,-2)   node(llb){\textcolor{gold}{		\(D\)}};
		\draw (-1,0)   node(lrt){\textcolor{blue}{			\(B'\)}};
		\draw (-1,-2)   node(lrb){\textcolor{gold}{			\(D'\)}};
		
		\draw (1,0)   node(rlt){\textcolor{blue}{			\(B\)}};
		\draw (1,-2)   node(rlb){\textcolor{gold}{		\(D\)}};
		\draw (3,0)   node(rrt){\textcolor{blue}{			\(B'\)}};
		\draw (3,-2)   node(rrb){\textcolor{gold}{			\(D'\)}};
		
		\draw (5,1)   node(Rlt){\textcolor{darkgreen}{\(c\)}};

		\draw (7,1)   node(Rrt){\textcolor{blue}{			\(b\)}};
		\draw (7,-1)   node(Rrb){\textcolor{blue}{			\(b'\)}};
		
		\draw[gt] (rrt) [out=0,in=150] to (Rrb);
		\draw (5,-1)   node(Rlb){\textcolor{red}{			\(a\)}};
		
		\draw[gb,xshift=-6cm] \mybox;
		\draw[gb,xshift= 6cm] \mybox;
		\draw[gd,xshift=-2cm,yshift=-1cm] \mybox;
		\draw[gd,xshift= 2cm,yshift=-1cm] \mybox;
		
		\draw (3,0)   node(rrt){\textcolor{blue}{			\(B'\)}};
		\draw (7,-1)   node(Rrb){\textcolor{blue}{			\(b'\)}};
		
		\footnotesize
		\draw[->] (Llt) to node[below]{\(q_{214}\)} (Lrt);
		\draw[->] (Llb) to node[above]{\(q_2\)} (Lrb);
		\draw[->] (Rlt) to node[below]{\(q_{214}\)} (Rrt);
		\draw[->] (Rlb) to node[above]{\(q_2\)} (Rrb);
		
		\draw[->] (llb) to node[right,pos=0.5]{\(q_{21}\)} (llt);
		\draw[->] (lrb) to node[left ,pos=0.5]{\(q_{21}\)} (lrt);
		\draw[->] (rlb) to node[right,pos=0.5]{\(q_{21}\)} (rlt);
		\draw[->] (rrb) to node[left ,pos=0.3]{\(q_{21}\)} (rrt);
		
		\draw[->,d] (llt) .. controls +(4.3,1.5) and +(-2,1) ..  node[above,pos=0.2]{\(p_{12}\)} (rlb);
		\draw[->,d] (lrt) .. controls +(4.3,1.5) and +(-2,1) ..  node[above,pos=0.2]{\(p_{12}\)} (rrb);
		
		\draw[->] (Llb) [out=-30,in=180] to node[above,pos=0.8]{\(p_1\)} (llb);
		\draw[->] (Lrb) [out=0,in=180] to node[above,pos=0.8]{\(p_{12}\)} (lrb);

		\draw[->] (rlt) [out=-30,in=180] to node[below,pos=0.8] {\(p_2\)} (Rlb);
		\draw[->] (rrt) [out=0,in=150] to node[above]{\(1\)} (Rrb);
		
		\draw[<-] (Rlb) [bend left] to  node [below]{~~\(q_1\)} (rrb);
		\end{tikzpicture}
		
		\myvspace
		\begin{tikzpicture}[ppscale]
		\begin{scope}[scale=0.8,yshift=\myvvspace]
			\footnotesize
			\ppfl\ppfr
			\draw (-4,0) node(W){\(\capL_{\ell}\):};
			\draw (-3,0) node(L){\phantom{\(\bullet\)}};
			\draw (-1,0) node(l){\(\circ\)};
			\draw (1,0) node(r){\(\circ\)};
			\draw (3,0) node(R){\phantom{\(\bullet\)}};
			\draw[->] (L) to node [above] {\(\sigma_{123}\)} (l);
			\draw[->,d] (l) to node [above] {\(\sigma_{23}\)} (r);
			\draw[<-] (r) to node [above] {\(\sigma_1\)} (R);
			\draw (0,-0.5) node {\(\underbrace{\hspace{1.4cm}}_{\ell}\)};
		\end{scope}
		
		\draw (-7,1)   node(Llt){\textcolor{darkgreen}{\(c\)}};
		\draw (-7,-1)   node(Llb){\textcolor{red}{			\(a\)}};
		\draw (-5,1)   node(Lrt){\textcolor{blue}{			\(b\)}};
		\draw (-5,-1)   node(Lrb){\textcolor{blue}{			\(b'\)}};
		
		\draw (-3,0)   node(llt){\textcolor{blue}{			\(B\)}};
		\draw (-3,-2)   node(llb){\textcolor{gold}{		\(D\)}};
		\draw (-1,0)   node(lrt){\textcolor{blue}{			\(B'\)}};
		\draw (-1,-2)   node(lrb){\textcolor{gold}{			\(D'\)}};
		
		\draw (1,0)   node(rlt){\textcolor{blue}{			\(B\)}};
		\draw (1,-2)   node(rlb){\textcolor{gold}{		\(D\)}};
		\draw (3,0)   node(rrt){\textcolor{blue}{			\(B'\)}};
		\draw (3,-2)   node(rrb){\textcolor{gold}{			\(D'\)}};

		\draw (5,-1)   node(Rlb){\textcolor{red}{			\(a\)}};
		\draw (7,1)   node(Rrt){\textcolor{blue}{			\(b\)}};
		\draw (7,-1)   node(Rrb){\textcolor{blue}{			\(b'\)}};
		
		\draw[gt] (Rrt) [out=-150,in=0] to (rrt);
		\draw (5,1)   node(Rlt){\textcolor{darkgreen}{\(c\)}};
		
		\draw[gb,xshift=-6cm] \mybox;
		\draw[gb,xshift= 6cm] \mybox;
		\draw[gd,xshift=-2cm,yshift=-1cm] \mybox;
		\draw[gd,xshift= 2cm,yshift=-1cm] \mybox;
		
		\draw (3,0)   node(rrt){\textcolor{blue}{			\(B'\)}};
		\draw (7,1)   node(Rrt){\textcolor{blue}{			\(b\)}};
		
		\footnotesize
		\draw[->] (Rlt) [out=180, in=30] to node[above,pos=0.25]{\(p_3\)} (rlt);
		\draw[->] (Rrt) [out=-150,in=0] to node[below,pos=0.3]{\(1\)} (rrt);
		
		\draw[->] (Llt) to node[below]{\(q_{214}\)} (Lrt);
		\draw[->] (Llb) to node[above]{\(q_2\)} (Lrb);
		\draw[->] (Rlt) to node[below,pos=0.4]{\(q_{214}\)} (Rrt);
		\draw[->] (Rlb) to node[above]{\(q_2\)} (Rrb);
		
		\draw[->] (llb) to node[right,pos=0.3]{\(q_{21}\)} (llt);
		\draw[->] (lrb) to node[left ,pos=0.7]{\(q_{21}\)} (lrt);
		\draw[->] (rlb) to node[right,pos=0.5]{\(q_{21}\)} (rlt);
		\draw[->] (rrb) to node[left ,pos=0.5]{\(q_{21}\)} (rrt);
		
		\draw[->,d] (llt) .. controls +(4.3,1.5) and +(-2,1) ..  node[above,pos=0.2]{\(p_{12}\)} (rlb);
		\draw[->,d] (lrt) .. controls +(4.3,1.5) and +(-2,1) ..  node[above,pos=0.2]{\(p_{12}\)} (rrb);
		
		\draw[->] (Llt) [in=180,out=-85] to node[below,pos=0.7] {\(p_{123}\)~~~~} (llb);
		\draw[->] (Lrt) [in=170,out=-85] to node[above,pos=0.4] {~~~\(p_{12}\)} (lrb);
		
		\draw[->] (Rlt) [bend left] to node [right,pos=0.9]{~\,\(q_4\)} (rrb);
		\end{tikzpicture}
		
		\myvspace
		\begin{tikzpicture}[ppscale]
		\begin{scope}[scale=0.8,yshift=\myvvspace]
		\footnotesize
		\ppfr\ppml
		\draw (-4,0) node(W){\(\ccon_{-\ell}\):};
		\draw (-3,0) node(L){\(\bullet\)};
		\draw (-1,0) node(l){\(\circ\)};
		\draw (1,0) node(r){\(\circ\)};
		\draw (3,0) node(R){\phantom{\(\bullet\)}};
		\draw[<-] (L) to node [above] {\(\sigma_{2}\)} (l);
		\draw[<-,d] (l) to node [above] {\(\sigma_{23}\)} (r);
		\draw[<-] (r) to node [above] {\(\sigma_{123}\)} (R);
		\draw (0,-0.5) node {\(\underbrace{\hspace{1.4cm}}_{\ell}\)};
		\end{scope}
		
		\draw (-7,1)   node(Llt){\textcolor{darkgreen}{\(c\)}};
		\draw (-7,-1)   node(Llb){\textcolor{red}{			\(a\)}};
		\draw (-5,1)   node(Lrt){\textcolor{blue}{			\(b\)}};
		\draw (-5,-1)   node(Lrb){\textcolor{blue}{			\(b'\)}};

		\draw (-3,-2)   node(llb){\textcolor{gold}{		\(D\)}};
		\draw (-1,0)   node(lrt){\textcolor{blue}{			\(B'\)}};
		\draw (-1,-2)   node(lrb){\textcolor{gold}{			\(D'\)}};
		
		\draw (1,0)   node(rlt){\textcolor{blue}{			\(B\)}};
		\draw (1,-2)   node(rlb){\textcolor{gold}{		\(D\)}};
		\draw (3,0)   node(rrt){\textcolor{blue}{			\(B'\)}};
		\draw (3,-2)   node(rrb){\textcolor{gold}{			\(D'\)}};
		
		\draw (5,1)   node(Rlt){\textcolor{darkgreen}{\(c\)}};
		\draw (5,-1)   node(Rlb){\textcolor{red}{			\(a\)}};
		\draw (7,1)   node(Rrt){\textcolor{blue}{			\(b\)}};
		\draw (7,-1)   node(Rrb){\textcolor{blue}{			\(b'\)}};
		
		\draw[gt] (lrt) [in=80,out=140,looseness=1.2] to (Lrb);	

		\draw (-3,0)   node(llt){\textcolor{blue}{			\(B\)}};

		\draw[gb,xshift=-6cm] \mybox;
		\draw[gb,xshift= 6cm] \mybox;
		\draw[gd,xshift=-2cm,yshift=-1cm] \mybox;
		\draw[gd,xshift= 2cm,yshift=-1cm] \mybox;

		\draw (-1,0)   node(lrt){\textcolor{blue}{			\(B'\)}};
		\draw (-5,-1)   node(Lrb){\textcolor{blue}{			\(b'\)}};
		
		\footnotesize
		
		\draw[->] (Llt) to node[below]{\(q_{214}\)} (Lrt);
		\draw[->] (Llb) to node[above,pos=0.7]{\(q_2\)} (Lrb);
		\draw[->] (Rlt) to node[below,pos=0.4]{\(q_{214}\)} (Rrt);
		\draw[->] (Rlb) to node[above]{\(q_2\)} (Rrb);
		
		\draw[->] (llb) to node[right]{\(q_{21}\)} (llt);
		\draw[->] (lrb) to node[left ]{\(q_{21}\)} (lrt);
		\draw[->] (rlb) to node[right]{\(q_{21}\)} (rlt);
		\draw[->] (rrb) to node[left ]{\(q_{21}\)} (rrt);

		\draw[->,d] (rlt) .. controls +(-4.3,1.5) and +(2,1) ..  node[above,pos=0.2]{\(p_{12}\)} (llb);
		\draw[->,d] (rrt) .. controls +(-4.3,1.5) and +(2,1) ..  node[above,pos=0.2]{\(p_{12}\)} (lrb);

		\draw[->] (llt) [out=180,in=30] to node[above,pos=0.8] {\(p_2\)} (Llb);
		\draw[->] (lrt) [in=80,out=140,looseness=1.15] to node[above,pos=0.4]{\(1\)} (Lrb);		
		
		\draw[->] (Rlt) [in=10,out=-95] to node[above,pos=0.4] {\(p_{123}\)~~~~} (rlb);
		\draw[->] (Rrt) [in=0,out=-95] to node[below,pos=0.7] {~~\(p_{12}\)} (rrb);
	
		\draw[<-] (Llb) [out=-40,in=150] to	node [below,pos=0.25]{\(q_1\)} (lrb);
		
		\end{tikzpicture}
		
		\myvspace
		\begin{tikzpicture}[ppscale]
		\begin{scope}[scale=0.8,yshift=\myvvspace]
			\footnotesize
			\ppfr\ppml
			\draw (-4,0) node(W){\(\ccon_0\):};
			\draw (-3,0) node(L){\(\bullet\)};
			\draw (3,0) node(R){\phantom{\(\bullet\)}};
			\draw[<-] (L) to node [above] {\(\sigma_{12}\)} (R);
		\end{scope}
		\draw (-7,1)   node(Llt){\textcolor{darkgreen}{\(c\)}};
		\draw (-7,-1)   node(Llb){\textcolor{red}{			\(a\)}};
		\draw (-5,1)   node(Lrt){\textcolor{blue}{			\(b\)}};
		\draw (-5,-1)   node(Lrb){\textcolor{blue}{			\(b'\)}};
		
		\draw (5,1)   node(Rlt){\textcolor{darkgreen}{\(c\)}};
		\draw (5,-1)   node(Rlb){\textcolor{red}{			\(a\)}};
		\draw (7,1)   node(Rrt){\textcolor{blue}{			\(b\)}};
		\draw (7,-1)   node(Rrb){\textcolor{blue}{			\(b'\)}};
		
		\draw[gt] (Rrt) [in=15,out=-145] to (Lrb);

		\draw[gb,xshift=-6cm] \mybox;
		\draw[gb,xshift= 6cm] \mybox;
		
		\draw (7,1)   node(Rrt){\textcolor{blue}{			\(b\)}};
		\draw (-5,-1)   node(Lrb){\textcolor{blue}{			\(b'\)}};
		
		\footnotesize
		
		\draw[->] (Llt) to node[below]{\(q_{214}\)} (Lrt);
		\draw[->] (Llb) to node[above]{\(q_2\)} (Lrb);
		\draw[->] (Rlt) to node[below,pos=0.4]{\(q_{214}\)} (Rrt);
		\draw[->] (Rlb) to node[above]{\(q_2\)} (Rrb);
		
		\draw[->] (Rlt) [in=35,out=-165]  to node[below]{\(p_{23}+q_{14}\)} (Llb);
		\draw[->] (Rrt) [in=15,out=-145] to node[above] {\(1\)} (Lrb);
		\end{tikzpicture}
		
		\myvspace
		\begin{tikzpicture}[ppscale]
		\begin{scope}[scale=0.8,yshift=\myvvspace]
			\footnotesize
			\ppfr\ppml
			\draw (-4,0) node(W){\(\ccon_{\ell}\):};
			\draw (-3,0) node(L){\(\bullet\)};
			\draw (-1,0) node(l){\(\circ\)};
			\draw (1,0) node(r){\(\circ\)};
			\draw (3,0) node(R){\phantom{\(\bullet\)}};
			\draw[->] (L) to node [above] {\(\sigma_{3}\)} (l);
			\draw[->,d] (l) to node [above] {\(\sigma_{23}\)} (r);
			\draw[<-] (r) to node [above] {\(\sigma_1\)} (R);
			\draw (0,-0.5) node {\(\underbrace{\hspace{1.4cm}}_{\ell}\)};
		\end{scope}
		\draw (-7,1)   node(Llt){\textcolor{darkgreen}{\(c\)}};
		\draw (-7,-1)   node(Llb){\textcolor{red}{			\(a\)}};
		\draw (-5,1)   node(Lrt){\textcolor{blue}{			\(b\)}};
		\draw (-5,-1)   node(Lrb){\textcolor{blue}{			\(b'\)}};
		
		\draw (-3,0)   node(llt){\textcolor{blue}{			\(B\)}};
		\draw (-3,-2)   node(llb){\textcolor{gold}{		\(D\)}};
		\draw (-1,0)   node(lrt){\textcolor{blue}{			\(B'\)}};
		\draw (-1,-2)   node(lrb){\textcolor{gold}{			\(D'\)}};
		
		\draw (1,0)   node(rlt){\textcolor{blue}{			\(B\)}};
		\draw (1,-2)   node(rlb){\textcolor{gold}{		\(D\)}};
		\draw (3,0)   node(rrt){\textcolor{blue}{			\(B'\)}};
		\draw (3,-2)   node(rrb){\textcolor{gold}{			\(D'\)}};
		
		\draw (5,-1)   node(Rlb){\textcolor{red}{			\(a\)}};
		\draw (7,1)   node(Rrt){\textcolor{blue}{			\(b\)}};
		\draw (7,-1)   node(Rrb){\textcolor{blue}{			\(b'\)}};
		
		\draw[gt] (Rrt) [out=-150,in=0] to (rrt);
		\draw (5,1)   node(Rlt){\textcolor{darkgreen}{\(c\)}};
		
		\draw[gb,xshift=-6cm] \mybox;
		\draw[gb,xshift= 6cm] \mybox;
		\draw[gd,xshift=-2cm,yshift=-1cm] \mybox;
		\draw[gd,xshift= 2cm,yshift=-1cm] \mybox;
		
		\draw (3,0)   node(rrt){\textcolor{blue}{			\(B'\)}};
		\draw (7,1)   node(Rrt){\textcolor{blue}{			\(b\)}};
		
		\footnotesize
		\draw[->] (Rlt) [out=180, in=30] to node[above,pos=0.25]{\(p_3\)} (rlt);
		\draw[->] (Rrt) [out=-150,in=0] to node[below,pos=0.3]{\(1\)} (rrt);
		
		\draw[->] (Llt) to node[below]{\(q_{214}\)} (Lrt);
		\draw[->] (Llb) to node[above]{\(q_2\)} (Lrb);
		\draw[->] (Rlt) to node[below,pos=0.4]{\(q_{214}\)} (Rrt);
		\draw[->] (Rlb) to node[above]{\(q_2\)} (Rrb);
		
		\draw[->] (llb) to node[right]{\(q_{21}\)} (llt);
		\draw[->] (lrb) to node[left ]{\(q_{21}\)} (lrt);
		\draw[->] (rlb) to node[right]{\(q_{21}\)} (rlt);
		\draw[->] (rrb) to node[left ]{\(q_{21}\)} (rrt);
		
		\draw[->,d] (llt) .. controls +(4.3,1.5) and +(-2,1) ..  node[above,pos=0.2]{\(p_{12}\)} (rlb);
		\draw[->,d] (lrt) .. controls +(4.3,1.5) and +(-2,1) ..  node[above,pos=0.2]{\(p_{12}\)} (rrb);
		
		\draw[->] (Llb) [out=-30,in=180] to node[above,pos=0.8]{\(p_1\)} (llb);
		\draw[->] (Lrb) [out=0,in=180] to node[above,pos=0.8]{\(p_{12}\)} (lrb);
		
		\draw[->] (Rlt) [bend left] to node [right,pos=0.9]{~\,\(q_4\)} (rrb);
		\end{tikzpicture}
	\caption{The main computation for the proof of \cref{thm:main:graded}, not including gradings. 
		Each puzzle piece represents a curve segment in \(\Sset(\gamma_K)\), which, when tensored with  the type~AD structure \(\BSADY\), becomes the chain complex over \(\algS\) shown below the puzzle piece. 
		Note \(\ell\in\Z^{>0}\).}
	\label{fig:pp-pairing}
\end{figure}

\begin{proof}[Proof of \cref{thm:main:graded}]
	Let us assume first that all local systems on \(\gamma_K\) are one-dimensional and thus trivial.
	With a view towards applying \cref{thm:BS_computations}, we first compute the effect of applying \(-\boxtimes \BSADY\) to the chain complexes over \(\algT\) corresponding to curve segments, ignoring gradings for a moment. 
	We carry out these computations in \cref{fig:pp-pairing}. 
	Clearly,
	\(\capR_{\ell}\boxtimes\BSADY\) 
	contains a direct summand that is chain homotopic to (the chain complex over \(\algS\) corresponding to)
	\(\s_{2\ell}\), 
	and	the same is true for 
	\(\capL_{\ell}\boxtimes\BSADY\)
	and 
	\(\sS_{2\ell}\), 
	as well as for
	\(\ccon_k\boxtimes\BSADY\)
	and 
	\(\rr_{2k}\)
	for all \(\ell\in\Z^{>0}\) and \(k\in\Z\). 
	However, in each case, four additional generators remain, two on each end, so to speak. 
	This is where we make use of Hanselman and Watson's puzzle piece notation \cite[Figure~1]{HW1}. 
	This notation is chosen such that two puzzle pieces fit together if and only if the corresponding curve segments can be joined at the respective ends to a valid curve on the torus.
	The following observation is crucial: 
	At each male end of a puzzle piece, the remaining two generators are the same, namely
	\textcolor{darkgreen}{\(c\)} and \textcolor{blue}{\(b\)}. 
	Similarly, at the female ends, the generators \textcolor{red}{\(a\)} and \textcolor{blue}{\(b'\)} remain. 
	This justifies a posteriori that the computation of 
	\(\CFD(X_K)^{\algT}\boxtimes\BSADY\) 
	can indeed be done locally on each curve segment. 
	In other words, 
	\(\CFD(X_K)^{\algT}\boxtimes\BSADY\)
	is indeed a direct sum of chain complexes over \(\algS\), namely one for each curve segment, as stated in the correspondence. 
	We now apply \cref{thm:BS_computations} to conclude.
	
	It remains to verify the gradings and to consider curves with non-trivial local systems. 
	We start with the latter.
	Let us suppose that \(\gamma_K\) contains some component with higher-dimensional local system~\(X\).
	For convenience, we may put this local system on an arrow adjacent to a generator in idempotent \(\bullet\); see the discussion before \cref{def:equivalence:local_systems}. 
	By inspecting the complexes in \cref{fig:pp-pairing}, we see that the local system on the curve corresponding to the curve segment containing this arrow is equal to \(XX^{-1}=\id\). 
	So any local system has the same effect on the result as a trivial local system of the same dimension. 
	
	To check the gradings, it clearly suffices to compute the grading of the generators in idempotents \(\arcA\) and \(\arcC\). 
	Note that any element the double coset \(\PI\backslash(\rGr(\Za)\times\mathfrak{A})/\PIII\) can be uniquely written as \(\PI\cdot(x;0,0;0,0,y,z)\cdot\PIII\) for some \(x,y,z\in\Q\).
	Identifying this element with \(\HFKgr{-x}{y,z}\), this recovers the usual bigrading on the tangle invariant.
	So, given a generator of \(\CFD(X_K)^{\algT}.\iota_\bullet\) in grading \(\HFKgr{m}{n}\), the corresponding generator of \(\CFD(X_K)^{\algT} \boxtimes \BSADY\) in idempotent \(\arcA\) has grading 
	\begin{align*}
	&
	\PI\cdot
	(-m-\tfrac{n}{2};0,-n;0,0,0,0)
	\cdot\PIII
	\\
	=~&
	\PI\cdot
	(-m-n;0,0;0,0,2n,2n)
	(\tfrac{1}{2};0,-1;0,0,-2,-2)^n
	\cdot\PIII
	\\
	=~&
	\PI\cdot
	(-m-n;0,0;0,0,2n,2n)
	\cdot\PIII
	\\
	=~&
	\HFKgr{m+n}{2n,2n}
	\end{align*}
	and the corresponding generator in idempotent \(\arcC\) has grading 
	\begin{align*}
	&
	\PI\cdot
	(-m-\tfrac{n}{2};0,-n;0,0,0,0)
	(-\half;-1,0;0,0,0,0)
	\cdot\PIII
	\\
	=~&
	\PI\cdot
	(-m-\tfrac{n}{2}-\half+n;-1,-n;0,0,0,0)
	\cdot\PIII
	\\
	=~&
	\PI\cdot
	(-\half;-1,0;0,0,0,0)
	(-m+n+\tfrac{n}{2};0,-n;0,0,0,0)
	\cdot\PIII
	\\
	=~&
	\PI\cdot
	(-m+n;0,0;0,0,2n,2n)
	(\tfrac{1}{2};0,-1;0,0,-2,-2)^n
	\cdot\PIII
	\\
	=~&
	\PI\cdot
	(-m+n;0,0;0,0,2n,2n)
	\cdot\PIII
	\\
	=~&
	\HFKgr{m-n}{2n,2n}.
	\end{align*}
	So it suffices to check the identities above for \(r=0=a\). 
	This is elementary to check, we simply compare the gradings in \cref{fig:HFKgradings,fig:HFTgradings}. 
\end{proof}

\section{Satellites, thinness, and A-links}\label{sec:thinness}

We now prove the results from \cref{subsec:intro:thinness} of the introduction. 

\begin{proof}[Proof of \cref{thm:intro:thin-satellite}]
	Zoltan Szabó's program \cite{hfkcalc} computes the following knot Floer homology of the knot \(K\) shown in \cref{fig:thin-satellite}:
	\[
	\HFKhat(K)
	\cong
	\delta^0t^1\mathbb{F}^4
	\oplus
	\delta^0t^0\mathbb{F}^9
	\oplus
	\delta^0t^{-1}\mathbb{F}^4.
	\]
	A conceptually more interesting proof, which also explains how this example was found, uses the multicurve invariant of the double tangle of the trefoil knot computed in \cref{exa:main:graded}. 
	Note that the Conway tangle \(T_\star\) highlighted on the left of \cref{fig:thin-satellite} agrees with said double tangle up to two full twists. 
	Using the naturality of the multicurve invariant under twisting \cite[Theorem~3.1]{pqSym}, we see that \(\gamma_{T_\star}\) consists of a single rational component \(\rr_0\) of slope 0 and a pair of conjugate special components of slope \(\infty\), denoted here by \(\mathbf{S}_\infty\). 
	The complementary tangle \(T'_\star\) on the right of \cref{fig:thin-satellite} is obtained from \(T_\star\) by rotation in the plane about 90 degrees and reversing all crossings. 
	Thus, the corresponding multicurve invariant consists of a single rational component \(\rr_\infty\) of slope \(\infty\) and a pair \(\mathbf{S}_0\) of conjugate special components of slope \(0\).
	The Lagrangian Floer homology between a rational and a special component of the same slope is zero \cite[Lemma~4.20]{KWZ_thin}. 
	Hence, by the gluing theorem \cite[Theorem~5.9]{pqMod},
	\[
	\HFKhat(T'_\star\cup T_\star)\otimes \mathbb{F}^2
	\cong
	\HF(-\gamma_{T'_\star},\gamma_{T_\star})
	\cong
	\HF(-\rr_\infty,\rr_0)
	\oplus
	\HF(-\mathbf{S}_0,\mathbf{S}_\infty).
	\]
	These two summands are thin and supported in the same \(\delta\)-grading. 
	This can be seen either by direct computation or as an application of \cite[Lemma~4.17]{KWZ_thin}. 
	%
	%
	%
	%
\end{proof}

\begin{proposition}\label{prop:two-specials}
	For any knot \(K\subset S^3\) except the unknot and the trefoil knots, the multicurve \(\gamma_{T_K}\) contains two special components in distinct \(\delta\)-gradings. 
\end{proposition}


\begin{figure}[t]\small
	\begin{tabular}{ccc}
		\toprule
		generator \(x\)
		&
		component of \(\Sset(\gamma_K)\)
		&
		component of \(\gamma_{T_K}\)
		\\\midrule
		\(x_u\)
		&
		\(\delta^{r+\tfrac{\ell}{2}}t^{a-\tfrac{\ell}{2}}\capR_{\ell}\)
		&
		\(\delta^{r+a}t_1^{2a-\ell}t_2^{2a-\ell}\s_{2\ell}\)
		\smallskip\\
		\(y_u\)
		&
		\(\delta^{r+1-\tfrac{\ell}{2}}t^{a+\tfrac{\ell}{2}}\capR_{\ell}\)
		&
		\(\delta^{r+1+a}t_1^{2a+\ell}t_2^{2a+\ell}\s_{2\ell}\)
		\smallskip\\
		\(x_v\)
		&
		\(\delta^{r+1-\tfrac{\ell}{2}}t^{a-\tfrac{\ell}{2}}\capL_{\ell}\)
		&
		\(\delta^{r+1-a}t_1^{2a-\ell}t_2^{2a-\ell}\sS_{2\ell}\)
		\smallskip\\
		\(y_v\)
		&
		\(\delta^{r+\tfrac{\ell}{2}}t^{a+\tfrac{\ell}{2}}\capL_{\ell}\)
		&
		\(\delta^{r-a}t_1^{2a+\ell}t_2^{2a+\ell}\sS_{2\ell}\)
		\\
		\bottomrule
	\end{tabular} 
	\caption{Computations for the proof of \cref{prop:two-specials}. 
	}
	\label{tab:thick-satellites}
\end{figure}

\begin{proof}
	We first introduce some notation and make some preliminary computations.  
	Given a curve segment \(\capR_{\ell}\) or \(\capL_{\ell}\) in \(\Sset(\gamma_K)\), consider the two generators of \(\HFKhat(K)\) at the ends of this curve segment. 
	If the segment is of type \(\capR_{\ell}\), we denote these two generators by \(x_u\) and \(y_u\) as shown in \cref{fig:HFKgradings:u}; 
	if it is of type \(\capL_{\ell}\), we denote them by \(x_v\) and \(y_v\) as shown in \cref{fig:HFKgradings:v}. 
	The second column of the table in \cref{tab:thick-satellites} shows the graded curve segments whose generators \(x\in\{x_u,y_u,x_v,y_v\}\), determined in the first column, sit in bigrading \(\HFKgr{r}{a}\).
	The third column shows the corresponding components of \(\gamma_{T_K}\) computed using \cref{thm:main:graded}. 
	Note that the \(\delta\)-grading of the components of \(\gamma_{T_K}\) are independent of the components'
	length \(\ell\). 
	
	We now study how two such curve segments can be joined up along a generator \(x\) of \(\HFKhat(K)\) to form part of the curve \(\gamma_K\).
	We distinguish two cases:
	\begin{enumerate}
		\item \label{enu:extremal} Suppose \(x=x_u=x_v\) or \(x=y_u=y_v\). 
		We compare the first and third, respectively second and fourth entry of the last column in the table in \cref{fig:thin-satellite}. 
		We see that in this case, \(\gamma_{T_K}\) contains two special components in distinct \(\delta\)-gradings, since \(a\), being the Alexander grading of a generator of \(\HFKhat(K)\), is an integer. 
		\item \label{enu:mixed} Suppose \(x=x_u=y_v\) or \(x=y_u=x_v\). 
		In this case, \(\gamma_{T_K}\) also contains two special components in distinct \(\delta\)-gradings, unless \(a=0\). 
	\end{enumerate}
	Let \(K\subset S^3\) be a knot and suppose \(\gamma_{T_K}\) does not contain two special components in distinct \(\delta\)-gradings. 
	Then \(\gamma_{K}\) consists of a single connected component. 
	Indeed, if there were two components, then in particular there would be a component of \(\gamma_{K}\) not containing the curve segment \(\ccon_{2\tau(K)}\); 
	a generator of maximal Alexander grading in that component would be a generator of type \eqref{enu:extremal}.
	Similarly, 
	one can see that \(\gamma_{K}\) has the structure of the knot Floer homology of an L-space knot: 
	All generators sit in distinct Alexander gradings and every curve segment in \(\Sset(\gamma_K)\) except \(\ccon_{2\tau(K)}\) connects generators in consecutive Alexander gradings. 
	(This is sometimes known as the ``staircase structure''.)
	If \(\dim\HFKhat(K)>3\) this implies that there is more than one generator of type \eqref{enu:mixed}, and only one of them can be in Alexander grading 0. 
	So we deduce that \(\dim\HFKhat(K)\leq3\). 
	But the only knots with such small knot Floer homology are the unknot and the two trefoil knots \cite{OSgenus,ghiggini2006knot,hedden2017geography}. 
\end{proof}

\begin{proof}[Proof of \cref{prop:intro:thick-satellites}]
	Let \(K\) be as in the statement of \cref{prop:intro:thick-satellites} and suppose for contradiction that \(P(K)\) is a satellite knot whose knot Floer homology is thin for some pattern \(P\) with wrapping number 2. 
	Then \(P(K)=T_P\cup T_K\) for some pattern tangle \(T_P\) and we may write 
	\[
	\HFKhat(P(K))\otimes \mathbb{F}^2
	\cong
	\HF(-\gamma_{T_P},\gamma_{T_K}).
	\]
	By \cref{prop:two-specials}, \(T_K\) contains two special components in distinct \(\delta\)-grading.
	Thus, \(\gamma_{T_P}\) only contains component of slope \(\infty\), because otherwise, \(\HF(-\gamma_{T_P},\gamma_{T_K})\) would not be thin by \cite[Lemma~4.17]{KWZ_thin}. 
	Furthermore, by the same reasoning, using \cite[Lemma~4.20]{KWZ_thin}, no component of \(\gamma_{T_P}\) can be special. 
	So \(\gamma_{T_P}\) consists of rational components of slope \(\infty\) only. 
	By \cite[Theorem 4.1]{LMZ}, the tangle \(T_P\) is vertically split and hence the wrapping number of the pattern \(P\) is 0, contrary to our assumptions.
\end{proof}

\begin{lemma}\label{lem:A-links}
	A knot \(K\subset S^3\) is an L-space knot if and only if all special components in \(\gamma_{T_K}\) sit in \(\delta\)-gradings of the same parity.
\end{lemma}

\begin{proof}
	This follows from the computations in the proof of \cref{prop:two-specials}, observing the fact that generators of type \eqref{enu:extremal} cannot occur. 
\end{proof}

\begin{proof}[Proof of \cref{prop:A-knot-satellites}] 
Let \(P\) and \(K\) be as in the statement of \cref{prop:A-knot-satellites} and suppose for contradiction that \(P(K)\) is an A-link. 
We now argue as in the proof of \cref{prop:intro:thick-satellites}, using the fact that by \cref{lem:A-links}, \(\gamma_{T_K}\) contains two special components in \(\delta\)-gradings of different parity.
\end{proof}

\begin{proof}[Proof of \cref{thm:A-links}]
	By \cref{lem:A-links}, it remains to see that there exists a non-trivial rational A-link filling of \(T_K\) if and only if all special components in \(\gamma_{T_K}\) sit in \(\delta\)-gradings of the same parity.
	But this follows from \cite[Lemma~4.17]{KWZ_thin}.
\end{proof}

	It is interesting to compare the following spaces \cite{RR,HRW,KWZ_thin}:
	\begin{align*}
		\mathcal{L}(K)
		&\coloneqq
		\{
		\nicefrac{p}{q}\in\QPI
		\mid
		S^3_{p/q}(K) \text{ is an L-space}
		\},
		\\
		\Aa{T_K}
		&\coloneqq
		\{
		\nicefrac{p}{q}\in\QPI
		\mid
		T_K(\nicefrac{p}{q}) \text{ is an A-link}
		\}.
	\end{align*}
	
\begin{theorem}\label{thm:comparison:fillings}
	For any knot \(K\subset S^3\), exactly one of the following holds:
	\begin{enumerate}
		\item \(\mathcal{L}(K)=\{\infty\}=\Aa{T_K}\);
		\item\label{item:2} \(\mathcal{L}(K)=\QPI\smallsetminus\{0\}=\Aa{T_K}\); in this case, \(K\) is the unknot;
		\item\label{item:3} \(\mathcal{L}(K)=[2\tau(K)-1,\infty]\) and \(\Aa{T_K}=(4\tau(K),\infty]\); in this case, \(\tau(K)>0\);
		\item\label{item:4} \(\mathcal{L}(K)=[\infty,2\tau(K)+1]\) and \(\Aa{T_K}=[\infty,4\tau(K)]\); in this case, \(\tau(K)<0\). 
	\end{enumerate}
\end{theorem}
	
\begin{proof}	
	Let \(K\subset S^3\) be a knot. 
	By \cref{thm:A-links}, \(\mathcal{L}(K)=\{\infty\}\) if and only if \(\Aa{T_K}=\{\infty\}\). 
	So it suffices to see that if \(K\) is an L-space knot, then exactly one of \eqref{item:2}--\eqref{item:4} holds. 
	We claim that the three cases correspond precisely to the cases \(\tau(K)=0\), \(\tau(K)>0\), and \(\tau(K)<0\), respectively. 
	Note that the unknot is the only L-space knot with \(\tau(K)=0\). 
	
	The values of \(\mathcal{L}(K)\) in these three cases can be easily determined from the multicurve \(\gamma_K\); this is explained in \cite[Section~7.5]{HRW}.  So it remains to determine the value of \(\Aa{T_K}\) in these cases. 
	
	For case \eqref{item:2}, observe that \(T_K\) is the rational tangle of slope 0, and so any rational filling of \(T_K\) is a 2-bridge link.
	All two-bridge links are alternating, except the unlink, which corresponds to the 0-rational filling and whose knot Floer homology is supported in two consecutive \(\delta\)-gradings. 
	So indeed \(\Aa{T_K}=\QPI\smallsetminus\{0\}\).
		
	For cases \eqref{item:3} and \eqref{item:4}, we first note that \(\Aa{T_K}\) contains some \(\nicefrac{p}{q}\neq\infty\) by \cref{thm:A-links}. 
	So \(\Aa{T_K}\) must be an interval \cite[Theorem~1.8]{KWZ_thin}. 
	In fact, as explained in \cite[Sections~2.1 and 4.2]{KWZ_thin}, the boundary points of the interval must be equal to \(\infty\) and \(4\tau(K)\), since \(\gamma_{T_K}\) is supported in these slopes. 
	Moreover, as the curve of slope \(4\tau(K)\) is rational, this slope is not contained in the interval and we already know that \(\infty\in\Aa{T_K}\).  
	So \(\Aa{T_K}\) is either equal to \((4\tau(K),\infty]\) or \([\infty,4\tau(K))\). 
	We determine which one it is by observing \(0\not\in\mathrm{A}(T_K)\). 
	This is because the determinant of the link \(T_K(0)=C_{2,0}(K)\) is zero.
\end{proof}
\begin{corollary}
	For any knot \(K\subset S^3\) and \(n\gg0\), \(S^3_{n}(K)\) is an L-space if and only if \(C_{2,n}(K)\) is an A-link.
\end{corollary}
\begin{proof}
	For any integer \(n\), \(T_K(n)=C_{2,n}(K)\). 
	Now choose \(n>4|\tau(K)|\) and apply \cref{thm:comparison:fillings}. 
\end{proof}
It is interesting to compare this to Hanselman and Watson's cabling formula for \(\gamma_K\) \cite{hanselman2019cabling}. 

\section{Growth of knot Floer and Khovanov homology under cabling}\label{sec:growth}

We now compare the Heegaard Floer multicurve invariant \(\gamma_{T}\) and the Khovanov multicurve invariant \(\Khr(T;\F)\) \cite{KWZ} for double tangles \(T=T_K\). 
To emphasize the similarity between the two invariants, we will in this section denote the former by \(\HFcurve{T}\) and the latter by \(\Khcurve{T}\). 
Like \(\HFcurve{T}\), the multicurve invariant \(\Khcurve{T}\) satisfies a gluing theorem that allows one to compute the Khovanov homology of the union of two tangles in terms of Lagrangian Floer homology \cite[Theorem~1.9]{KWZ}. 
Even more remarkably, the components of \(\Khcurve{T}\) are also subject to very similar geography restrictions \cite[Theorem~1.2]{KWZ_linear}. 
We do not need the full statement of this classification; it suffices to say that there is a class of \textit{rational} curves \(\rKh(k)\), parametrized by their \textit{slope} \(k\in\Z\), and a class of \textit{special} curves \(\sKh_{\ell}(\infty)\), parametrized by their \textit{length} \(\ell\in2\Z\).
To simplify notation, we will write 
\[
\sKh_{\ell}\coloneqq\sKh_{\ell}(\infty)
\quad\text{and}\quad
\rKh_k\coloneqq\rKh(k).
\]
For clarity, we will in this section write \(\sHF_{\ell}\), \(\sSHF_{\ell}\), and \(\rHF_k\) for the curves previously denoted by \(\s_{\ell}\), \(\sS_{\ell}\), and \(\rr_k\), respectively, for all \(\ell\in\Z^{>0}\) and \(k\in\Z\). 

We have the following structure theorem for \(\Khcurve{T_K}\), which is implicit in \cite{LZ}. 

\begin{theorem}\label{thm:structure:Khcurve}
	For any knot \(K\subset S^3\), the multicurve \(\gamma_{T_K}\) contains a single rational component \(\rKh_{2\vartheta_2(K)}\), where \(\vartheta_2(K)\in\Z\) is the concordance homomorphism from \cite{LZ}, and every other curve segment is of type \(\sKh_{\ell_i}\) for some \(\ell_i>0\).
\end{theorem}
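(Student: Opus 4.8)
The plan is to transcribe the proof of \cref{thm:structure:HFTcurve} into the Khovanov setting, replacing its three Heegaard Floer inputs—the gluing formula \cite[Theorem~5.9]{pqMod}, the classification of components \cite[Theorem~0.5]{pqSym}, and the detection of tangle connectivity \cite[Observation~6.1]{pqMod}—by the Khovanov gluing theorem \cite[Theorem~1.9]{KWZ}, the geography restrictions \cite[Theorem~1.2]{KWZ_linear}, and the structural analysis of double tangles carried out in \cite{LZ}, respectively. Since the statement is, as noted, implicit in \cite{LZ}, the task is mostly one of assembling these pieces in the right order.

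First I would record that $T_K$ admits a rational closure isotopic to the unknot. Indeed, by construction the two strands of $T_K$ together cobound an embedded disc in $B^3$ whose remaining boundary runs along the arcs $\aaa$ and $\aac$; joining the four tangle ends along $\aaa$ and $\aac$ therefore produces the unknot. Applying the gluing theorem \cite[Theorem~1.9]{KWZ}, the Lagrangian Floer homology of $\Khcurve{T_K}$ with the rational curve $\rKh(\infty)$ (carrying the trivial one-dimensional local system) computes the reduced Khovanov homology of the unknot and is thus one-dimensional. This is the Khovanov analogue of the computation $\F^2\otimes\HFKhat(U)\cong\F^2$ in the proof of \cref{thm:structure:HFTcurve} (the discrepancy in dimension reflects that a rational curve on the four-punctured sphere meets $\rKh(\infty)$ half as often as in the Heegaard Floer normalisation, where the relevant double branched cover is the torus).

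Next comes the dimension count. By \cite[Theorem~1.2]{KWZ_linear}—if necessary combined with a connectivity argument in the spirit of \cite[Observation~6.1]{pqMod}—every component of $\Khcurve{T_K}$ is either a rational curve $\rKh(k)$ or a special curve $\sKh_\ell(\infty)$ of slope $\infty$, and the latter can be isotoped off $\rKh(\infty)$ and so contribute nothing to the pairing. A rational curve $\rKh(k)$ with the trivial one-dimensional local system already contributes a one-dimensional space to the pairing with $\rKh(\infty)$, independently of $k$, while a higher-dimensional or reducible local system would contribute strictly more. As the pairing is one-dimensional and nonzero, it follows that $\Khcurve{T_K}$ contains exactly one rational component, that this component carries the trivial one-dimensional local system, and that all remaining components are special of slope $\infty$. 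Finally, the slope of the rational component equals $2\vartheta_2(K)$ essentially by the definition of the concordance homomorphism $\vartheta_2$ in \cite{LZ} (equivalently, it is pinned down by the $s$-invariant of $C_{2,1}(K)$ via the satellite formula proved there), and in particular it is even.

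The step I expect to require the most care is the middle one: confirming that the geography of \cite{KWZ_linear}, suitably combined with tangle-connectivity information, genuinely restricts the components of $\Khcurve{T_K}$ to rational curves and special curves of slope $\infty$, so that nothing unexpected survives the pairing with $\rKh(\infty)$; and tracking the grading conventions in the gluing theorem carefully enough to be sure the output of that pairing is exactly the one-dimensional $\Khr$ of the unknot. The parity of the slope and its precise value as $2\vartheta_2(K)$ are then not really proved here but imported from \cite{LZ}; everything else is a routine transcription of the Heegaard Floer argument.
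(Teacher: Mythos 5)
Your argument is essentially correct, but it takes a genuinely different route from the paper for the structural part of the statement. The paper does not transcribe the Heegaard Floer pairing argument; instead it observes that \(T_K\) is the quotient tangle of the strongly invertible knot \(K\# r(K)\) (the involution swaps the two summands), so that \cite[Theorem~3.1]{KWZ_strong_inversions} immediately yields exactly one rational component plus special components of slope \(\infty\), and it then deduces evenness of the slope from connectivity detection \cite[Theorem~5.6]{KWZ_linear}. Your route---closing \(T_K\) along the arcs \(\aaa\) and \(\aac\) to obtain the unknot and pairing \(\Khcurve{T_K}\) with \(\rKh(\infty)\)---is the honest Khovanov analogue of the proof of \cref{thm:structure:HFTcurve} and does work: a rational component of slope \(p/q\) contributes \(2q\) to the pairing, a special component of slope other than \(\infty\) contributes at least \(4\), and a two-dimensional total then forces exactly one rational component, with trivial one-dimensional local system and integer slope, and forces all special components to have slope \(\infty\). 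What your approach buys is independence from the strong-inversion machinery; what the paper's buys is that the geography work is already packaged in \cite{KWZ_strong_inversions}, so no pairing computations involving \(\rKh(\infty)\) (which are not literally covered by \cref{lem:pairing_linear_curves:dimension_formula}) are required.

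Two points need repair. First, the normalisation: by the gluing theorem as used in the proof of \cref{prop:growth:Khr}, the pairing computes \(2\dim\Khr(U)=2\), not \(1\), and your parenthetical about double branched covers is not the right explanation---the Heegaard Floer gluing theorem likewise produces \(\F^2\otimes\HFKhat(U)\) by convention, not by a covering-space count. Your factor-of-two error is consistent on both sides of the count, so the conclusion survives, but the numbers as written are wrong. Second, the identification of the slope with \(2\vartheta_2(K)\) is not ``essentially by definition'': \(\vartheta_2\) is defined via the Bar-Natan multicurve, so one must transfer the slope statement from \(\BNr(T_K)\) to \(\Khr(T_K)\); the paper does this via \cite[Corollary~5.14]{LZ}, naturality of the mapping class group action \cite[Theorem~1.13]{KWZ}, and the computation in \cite[Proof of Lemma~5.11]{LZ}. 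Since you derive evenness of the slope from this identification rather than from connectivity, this step cannot simply be waved through.
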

\begin{proof}
	The tangle \(T_K\) is the quotient tangle of a strongly invertible knot, namely \(K\#r(K)\), where \(r(K)\) is the reverse of \(K\). 
	So by \cite[Theorem~3.1]{KWZ_strong_inversions}, every component except one is of type \(\sKh_{\ell_i}\) for some \(\ell_i\in2\Z^{>0}\), and the remaining one is of type \(\rKh_{k}\) for some slope \(k\in\Z\). 
	The slope \(k\) is divisible by 2 because rational components (of odd length) detect how tangle ends are connected \cite[Theorem~5.6]{KWZ_linear}. 
	(This implies that also in \cite[Theorem~3.1]{KWZ_strong_inversions}, the slope \(k\) is always divisible by 2.)
	
	It remains to relate the slope \(k\) to the concordance invariant \(\vartheta_2(K)\). 
	By \cite[Corollary~5.14]{LZ}, \(2\vartheta_2(K)\) is the slope of the multicurve invariant \(\BNr(T_K)\) near the bottom right tangle end.
	By naturality of the mapping class group action on \(\BNr(T_K)\) and \(\Khr(T_K)\) \cite[Theorem~1.13]{KWZ}, it suffices to show that the rational component of the twisted curve \(\Khr(T_K+Q_{-2\vartheta_2(K)})\) is 0.
	This follows from a simple computation carried out in \cite[Proof of Lemma~5.11]{LZ}.
\end{proof}

\begin{remark}
	Combining \cref{thm:main:graded,thm:structure:Khcurve}, we see that the slopes of the rational components in \(\HFcurve{T_K}\) and \(\Khcurve{T_K}\) are \(4\tau(K)\) and \(2\vartheta_2(K)\), respectively. 
	So, as already observed in \cite[Section~1.3]{LZ}, \(2\tau(K)\) plays the same role in knot Floer homology as the invariant \(\vartheta_2(K)\) in Khovanov homology.
\end{remark}

For the proofs of \cref{prop:growth:HFK:knots,prop:growth:Khr}, we need the following basic result.

\begin{lemma}\label{lem:pairing_linear_curves:dimension_formula}
	Let \(k,k'\in\Z\) with \(k\neq k'\) and \(\ell\in2\Z^{>0}\).
	Then 
	\begin{align*}
	\dim\HF(\gamma,\gamma')
	&=
	\begin{cases*}
	2\ell & if \((\gamma,\gamma')\in\{(\rKh_{k},\sKh_{\ell}), (\rHF_{k},\sHF_{\ell}), (\rHF_{k},\sSHF_{\ell})\}\)
	\\
	2|k'-k| & if \((\gamma,\gamma')\in\{(\rKh_{k},\rKh_{k'}),(\rHF_{k},\rHF_{k'})\}\)
	\\
	4 & if \(\gamma=\gamma'=\rKh_{k}\)
	\\
	2 & if \(\gamma=\gamma'=\rHF_{k}\)
	\end{cases*}
	\end{align*}
\end{lemma}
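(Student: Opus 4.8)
The plan is to split the statement into its off-diagonal entries, which are minimal geometric intersection numbers, and its two diagonal entries, which are total homologies of a single rational curve, and to treat these separately.

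For the off-diagonal entries I would use that in both theories the pairing of two curves on the four-punctured sphere $S^2_4$ carrying trivial local systems is computed, after putting the curves in minimal position, by a complex whose differential counts immersed bigons; since every rational curve is an embedded essential simple closed curve, a pair containing a rational curve has no bigons in minimal position, so the differential vanishes over $\F$ and $\dim\HF(\gamma,\gamma')$ is exactly the minimal geometric intersection number (\cite[Theorem~5.9]{pqMod}, \cite[Theorem~1.9]{KWZ}; such pairings of rational and special curves are carried out explicitly in \cite{KWZ_linear} and \cite{pqSym}). It then remains to count in the planar cover $\R^2\smallsetminus\Z^2$ of $S^2_4$: two rational curves of distinct integer slopes $k$ and $k'$ meet minimally in $2|k-k'|$ points — the standard count $2|pq'-p'q|$ for slopes $p/q$ and $p'/q'$, specialised to $q=q'=1$ — and a rational curve of slope $k$ meets the equivariant push-off that defines a length-$\ell$ special curve of slope $\infty$ in exactly $2\ell$ points, by a direct count of the marked intervals in the definition of the special curve. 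This settles all of $(\rKh_k,\rKh_{k'})$, $(\rHF_k,\rHF_{k'})$, $(\rKh_k,\sKh_\ell)$, $(\rHF_k,\sHF_\ell)$, and $(\rHF_k,\sSHF_\ell)$.

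For the diagonal entries, a rational curve is an embedded essential simple closed curve, so $\HF(\gamma,\gamma)$ is the total homology of a circle. In the Heegaard Floer Fukaya-type category this is $\F^2$, giving $\dim\HF(\rHF_k,\rHF_k)=2$; in the version of the Fukaya category that underlies the Khovanov pairing it is $\F^4$, giving $\dim\HF(\rKh_k,\rKh_k)=4$. I would take this last value from the setup of \cite{KWZ} — it is the one genuine difference between the two theories here — or, as a cross-check, read it off from \cite[Theorem~1.9]{KWZ} applied to the two-component unlink, the link realising the self-pairing $\HF(\rKh_k,\rKh_k)$.

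The main obstacle I expect is not a single hard computation but the combination of (a) unwinding the definition of the special curve precisely enough to pin the count against a rational curve at $2\ell$, and (b) handling the two normalisations in parallel so that the displayed formula is literally correct in both theories — in particular confirming that the factor producing the Khovanov value $4$ on the diagonal does not leak into the off-diagonal rows. Point (b) should be harmless because, for distinct rational curves and for a rational curve against a special curve, the minimal-position Floer complex is concentrated in a single internal degree and carries no differential in either theory, so its dimension is forced to equal the geometric intersection number $2|k'-k|$ or $2\ell$; but it is the step I would be most careful about.
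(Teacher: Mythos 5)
The paper records no argument for this lemma beyond calling it a straightforward computation, so the question is only whether your computation is complete. The Heegaard Floer half is: in minimal position there are no bigons, the planar-cover counts give $2|k-k'|$ for two rational curves and $2\ell$ for a rational curve against the equivariant push-off defining $\sHF_{\ell}$ or $\sSHF_{\ell}$ (per period the push-off crosses $2\ell$ lines of the preimage of $\rHF_k$, independently of $k$), and the self-pairing of an embedded circle with trivial local system is $H_*(S^1;\F)=\F^2$. This is surely the intended computation for those rows.

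The gap is in the Khovanov column. Your premise that every rational curve is an embedded essential simple closed curve is incompatible with the value you need on the diagonal: for an embedded circle with trivial local system the self-pairing is $\F^2$ under any reasonable Fukaya-type convention, and no change of convention can yield $4$ there while simultaneously leaving the transverse pairings equal to naive geometric intersection numbers --- otherwise the row $(\rKh_k,\rKh_{k'})$ would scale by the same factor, contradicting the table. So the assertion that ``the total homology of a circle is $\F^4$ in the category underlying the Khovanov pairing'' is not an argument; the $4$ reflects the fact that the rational components of $\Khr$ are genuinely different objects from the Heegaard Floer rational curves (see the classification in \cite{KWZ_linear}), and whatever produces the $4$ must also be tracked in the rows $(\rKh_k,\rKh_{k'})$ and $(\rKh_k,\sKh_{\ell})$, which you currently justify by the embedded-circle picture imported from the Heegaard Floer side. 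Your fallback of reading the diagonal off the gluing theorem of \cite{KWZ} applied to the two-component unlink is sound, and the same trick pins $(\rKh_k,\rKh_{k'})=2|k-k'|$ via $(2,n)$-torus links; but the entry $(\rKh_k,\sKh_{\ell})=2\ell$ cannot be isolated by gluing alone, since special components never occur as the full invariant of a tangle, so for that entry you must take the actual local form of the Khovanov rational and special curves from \cite{KWZ,KWZ_linear} and redo the minimal-position count there rather than assuming it coincides with the Heegaard Floer one.
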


\begin{proof}
	These are straightforward computations.
\end{proof}

\begin{proof}[Proof of \cref{prop:growth:HFK:knots}]
	By the gluing theorem, combined with the \ref{thm:main:intro},
	\begin{align*}
	2\dim \HFKhat(C_{2,2t+1}(K))
	&=
	\dim\HF(\rHF_{2t+1},\HFcurve{T_K})
	\\
	&=
	\dim\HF(\rHF_{2t+1},\rHF_{4\tau(K)})
	+
	\sum_{\s}\dim\HF(\rHF_{2t+1},\s)
	\end{align*}
	where the sum is over all special connected components \(\s\) in \(\HFcurve{T_K}\). 
	The first summand is equal to \(2|2t+1-4\tau(K)|\) by \cref{lem:pairing_linear_curves:dimension_formula}. 
	Each term in the second summand is equal to twice the length of the special curve \(\s\). 
	The number of terms is \((d-1)\) by the \ref{thm:main:intro}, so the second term is equal to \(4(d-1)\overline{\ell}\).
  The desired identity follows.  
	
	The upper bound for \(\dim \HFKhat(C_{2,2t+1}(K))\) is clear.  
	The lower bound follows from the observation that \(\overline{\ell}\geq1\) and \(|2t+1-4\tau(K)|\) is odd.
\end{proof}

\begin{proof}[Alternative proof of \cref{prop:growth:HFK:knots}]
	We briefly outline an alternative argument that uses Hanselman and Watson's cabling formula for the multicurve invariant \(\gamma_K\) \cite{hanselman2019cabling}. 
	As noted in the introduction, for any knot \(K\subset S^3\), the dimension of \(\HFKhat(K)\) is equal to the minimal number of intersection points of \(\gamma_{K}\) with the meridian \(\mu_{K}\) (counted with multiplicity of the dimension of any local system). 
	Equivalently, \(\dim\HFKhat(K)\) is equal to the number of elements in \(\Sset(\gamma_{K})\). 
	The same is true, of course, if we replace \(K\) by \(K'\coloneqq C_{2,2t+1}(K)\). 	
	Using Hanselman and Watson's cabling formula, we can easily compute the multicurve \(\gamma_{K'}\) from \(\gamma_{K}\); in particular, we determine \(\Sset(\gamma_{K'})\) from \(\Sset(\gamma_{K})\) as follows: 
	Every component \(\capR_{\ell}\) contributes a single component \(\capR_{\ell'}\) (for some \(\ell'\in\Z^{>0}\)), \(\ell\) components \(\capR_{1}\) and \((\ell-1)\) components \(\capL_{1}\) in \(\Sset(\gamma_{K'})\). 
	Similarly, \(\capL_{\ell}\) contributes a total of \(2\ell\) components in \(\Sset(\gamma_{K'})\). 
	From this, we get the first summand in \cref{prop:growth:HFK:knots}.
	The second summand comes from the contribution of \(\ccon_{2\tau(K)}\). 
	This curve contributes a single component \(\ccon_{2\tau(K')}\) to \(\Sset(\gamma_{K'})\) as well as  \(\lfloor|\tfrac{2t+1}{2}-2\tau(K)|\rfloor\) pairs of components \(\capR_{1}\) and \(\capL_{1}\). 
\end{proof}

\begin{proof}[Proof of \cref{prop:growth:Khr}]
	We start by observing that each of the two components of the link \(C_{2,2t}(K)\) is equal to \(K\). 
	So by \cite[Corollary~1.6]{BatsonSeed},
	\[
	\dim\Kh(C_{2,2t}(K))
	\geq
	(\dim\Kh(K))^2,
	\]
	where \(\Kh(\cdot)\) denotes unreduced Khovanov homology over \(\F\). 
	Thus, for reduced Khovanov homology \(\Khr(\cdot)\), we obtain
	\begin{equation}\label{eqn:BatsonSeed}
	\dim\Khr(C_{2,2t}(K))
	\geq
	2(\dim\Khr(K))^2=2d^2,
	\end{equation}
	since \(\dim\Kh(J)=2\dim\Khr(J)\) for any link \(J\) \cite[Corollary~3.2.C]{Shumakovitch}. 
	By the gluing theorem, combined with \cref{thm:structure:Khcurve},
	\begin{align*}
	2\dim \Khr(C_{2,k}(K))
	&=
	\dim\HF(\rKh_k,\Khcurve{T_K})
	\\
	&=
	\dim\HF(\rKh_k,\rKh_{2\vartheta_2(K)})
	+
	\sum_{\s}\dim\HF(\rKh_k,\s)
	\end{align*}
	for any integer \(k\),	where the sum is over all special components \(\s\) in \(\Khcurve{T_K}\). 
	The first summand is equal to \(2|k-2\vartheta_2(K)|\), unless \(k=2\vartheta_2(K)\), in which case it is equal to 4. 
	The second summand is independent of \(k\).  
	Setting \(k=2t+1\) and \(k=2\vartheta_2(K)\), we obtain
	\[
	\dim \Khr(C_{2,2t+1}(K))
	= 
	\dim \Khr(C_{2,2\vartheta_2(K)}(K)) - 2 + |2t+1-2\vartheta_2(K)| 
	\]
	We now combine this identity with \eqref{eqn:BatsonSeed} and obtain the desired inequality.
\end{proof}

I close with a few remarks about \cref{conj:Kh:double-tangle}. 
I have verified this conjecture for a selection of about 30 knots with up to 11 crossings using the computer program \cite{khtpp}. 
The conjecture is also consistent with \cref{prop:growth:Khr}---at least, if we assume the following conjecture, which generalises \cite[Conjecture~3.10]{KWZ_strong_inversions} and for which there is now overwhelming computational evidence. 

\begin{conjecture}\label{conj:Kh:divisibility}
	For any Conway tangle \(T\), the length of every special component of \(\Khcurve{T}=\Khr(T;\F)\) is divisible by 4. 
\end{conjecture}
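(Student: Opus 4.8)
The plan is to reduce \cref{conj:Kh:divisibility} to a bigraded statement about Khovanov homology of honest links. By the geography theorem \cite[Theorem~1.2]{KWZ_linear}, the multicurve $\Khcurve{T}$ is a union of rational components $\rKh_k$ and special components $\sKh_\ell$ with $\ell\in2\Z^{>0}$, so the assertion is that every such length $\ell$ is in fact divisible by~$4$. One accesses $\ell$ through the gluing theorem \cite[Theorem~1.9]{KWZ}: capping $T$ off with a rational tangle of slope $k$ produces a link whose reduced Khovanov homology has dimension $\tfrac12\dim\HF(\rKh_k,\Khcurve{T})$, and by \cref{lem:pairing_linear_curves:dimension_formula} a special component $\sKh_\ell$ contributes exactly $2\ell$ to this pairing for \emph{every} slope $k$, in the form of a ``staircase'' block whose size grows linearly in $\ell$ and which occupies a range of consecutive $\delta$-gradings with alternating Khovanov-module labels. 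From this, evenness of $\ell$ is already forced by total dimension and $\delta$-grading parity; the content of the conjecture is the remaining factor of two.

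To produce that factor I would pass from total dimensions to the full bigrading and bring in an auxiliary structure on reduced Khovanov homology over $\F$. The most natural candidate is Shumakovitch's reduced Turner differential, which together with the identity $\dim\Kh(J)=2\dim\Khr(J)$ for any link $J$ \cite[Corollary~3.2.C]{Shumakovitch} governs the relation between reduced and unreduced Khovanov homology; the key step would be to show that a staircase block coming from a length-$\ell$ special curve is itself a zigzag for this differential, so that it ``halves'' in a controlled way and the even-length constraint applies a second time --- now to the halved block --- forcing $\ell\in4\Z$. An essentially equivalent route runs through the Bar-Natan deformation over $\F$: a special component of $\Khcurve{T}$ of length $\ell$ should collapse to a special component of $\BNr(T)$ of length $\ell/2$, and the conjecture then follows from the Bar-Natan analogue of the fact --- used throughout this paper --- that special curves on both the Heegaard Floer and the Khovanov sides have positive even length.

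For the subclass of quotient tangles $T=T_{K',h}$ of strongly invertible knots --- which contains the double tangles $T_K$ and is the setting of \cite[Conjecture~3.10]{KWZ_strong_inversions} --- a more structured argument should be available, because the capped-off links are then periodic and carry a natural $\Z/2$-symmetry; an equivariant refinement of Khovanov homology in the spirit of Batson--Seed \cite{BatsonSeed} should constrain $\dim\Khr$ of the quotient, hence the special lengths, to lie in $4\Z$, and a proof there ought to propagate to arbitrary Conway tangles via the gluing theorem. I expect the crux to be exactly this last factor of two: every routinely available tool --- the gluing theorem, detection of tangle connectivity by rational components \cite[Theorem~5.6]{KWZ_linear}, $\delta$-grading parity --- sees only that $\ell$ is even, and genuinely new input (the $\Z/2$-equivariant structure of the relevant cables, or a direct analysis of the Turner differential on the staircase blocks) seems to be required to close the gap. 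This is why the statement remains a conjecture.
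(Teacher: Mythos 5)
The statement you were asked to prove is \cref{conj:Kh:divisibility}, which the paper does not prove: it is stated as an open conjecture, generalising \cite[Conjecture~3.10]{KWZ_strong_inversions}, supported only by computational evidence (about 30 knots, checked with \cite{khtpp}), and explicitly noted to be \emph{false} over fields of characteristic different from~2. There is therefore no proof in the paper to compare your attempt against, and your proposal --- as you yourself concede in its closing sentences --- is not a proof either. To your credit, you correctly isolate where the difficulty sits: the geography theorem already gives \(\ell\in2\Z^{>0}\), and everything accessible through the gluing theorem and \cref{lem:pairing_linear_curves:dimension_formula} sees only total dimensions of pairings, hence only this evenness; the conjecture is exactly the extra factor of two.

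That said, each of the routes you sketch replaces the conjecture by an equally unproven statement rather than reducing it to something known. The claim that a length-\(\ell\) special component of \(\Khcurve{T}\) ``should collapse'' to a length-\(\ell/2\) special component of \(\BNr(T)\), or equivalently that the staircase block of a special curve is a single zigzag for the reduced Turner differential, is precisely where the missing factor of two lives; nothing in \cite{KWZ}, \cite{KWZ_linear}, or \cite{Shumakovitch} supplies it, and asserting it is essentially restating the conjecture in Bar-Natan language. The equivariant Batson--Seed route for quotient tangles of strongly invertible knots is likewise speculative, and even if carried out it would not obviously propagate to arbitrary Conway tangles, since the gluing theorem only constrains sums of contributions \(2\ell\) over all special components and cannot separate individual lengths. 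One filter your sketch does pass, and which any candidate argument must: because the statement fails in odd characteristic, a correct proof has to use \(\F=\mathbb{F}_2\) in an essential way, so the purely dimension-theoretic and \(\delta\)-parity considerations in your first paragraph can never close the gap on their own. In short: the gap is genuine, it is the entire content of the conjecture, and you have identified it correctly without filling it.
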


Note this conjecture is known to be false if \(\Khr(T;\F)\) is computed over a field of different characteristic. 
Now, assuming \cref{conj:Kh:double-tangle,conj:Kh:divisibility}, 
\cref{prop:growth:Khr} follows by the same argument as \cref{prop:growth:HFK:knots}. 
By the gluing theorem, 
\begin{align*}
2\dim \Khr(C_{2,2t+1}(K))
&=
\dim\HF(\rKh_{2t+1},\Khcurve{T_K})
\\
&=
\dim\HF(\rKh_{2t+1},\rKh_{2\vartheta_2(K)})
+
\sum_{\s}\dim\HF(\rKh_{2t+1},\s),
\end{align*}
where the sum is over all special connected components \(\s\) in \(\Khcurve{T_K}\). 
By assumption, there are at least \(\tfrac{1}{2}(d^2-1)\) such components (where \(d=\dim\Khr(K)\)) and each of them contributes at least 8 to the expression above.  
So the overall contribution of special components of \(\Khcurve{T_K}\) to \(\dim \Khr(C_{2,2t+1}(K))\) is \(2(d^2-1)\), i.e.\ the first two summands in \cref{prop:growth:Khr}. 
The remaining summand is equal to the contribution of the rational component of \(\Khcurve{T_K}\).

\bibliographystyle{amsalpha}
\newcommand{\arxiv}[1]{\relax\href{http://arxiv.org/abs/#1}{\tt arXiv:\penalty -100\unskip#1}}
\bibliography{main}
\end{document}